\newtheorem{theorem}{Theorem}
\newtheorem{proposition}[theorem]{Proposition}
\newtheorem{definition}[theorem]{Definition}
\newtheorem{remark}[theorem]{Remark}
\newtheorem*{thmr}{Theorem \ref{rigidity}}
\newtheorem*{thmr2}{Theorem \ref{rigidity2}}
\newtheorem*{thmr3}{Theorem \ref{rigidity3}}
\numberwithin{equation}{section} \numberwithin{theorem}{section}
\newcommand{\R}{\mathbb{R}}
\newcommand{\N}{\mathbb{N}}
\newcommand{\circo}{\accentset{\circ}}
\newcommand{\e}{\varepsilon}
\newcommand{\mbb}{\mathbb}
\newcommand{\C}{\mbb{C}}
\newcommand{\mc}{\mathcal}
\def\XXint#1#2#3{{\setbox0=\hbox{$#1{#2#3}{\int}$}
     \vcenter{\hbox{$#2#3$}}\kern-.5\wd0}}
\newcommand{\ov}{\overline}
\DeclareMathOperator{\dist}{dist}
\DeclareMathOperator{\diam}{diam}
\begin{document}
\title[Quantitative rigidity results for conformal immersions]{Quantitative rigidity results for conformal immersions}
\author{Tobias Lamm}
\address[T.~Lamm]{Institute for Analysis, Karlsruhe Institute of Technology (KIT), Kaiserstr. 89-93, D-76133 Karlsruhe, Germany}
\email{tobias.lamm@kit.edu}
\author{Huy The Nguyen}
\address[H.T.~Nguyen]{School of Mathematics and Physics, The University of Queensland, Brisbane QLD 4072, Australia}
\email{huy.nguyen@maths.uq.edu.au}
\thanks{The second author was supported by The Leverhulme Trust. The authors thank Reiner Sch\"atzle for pointing out a gap in a previous version of the paper.}
\date{\currenttime, \today}

\subjclass[2000]{Primary 	53C24 ; Secondary  53C42}

\begin{abstract}
In this paper we prove several quantitative rigidity results for conformal immersions of surfaces in $\R^n$ with bounded total curvature. We show that (branched) conformal immersions which are close in energy to either a round sphere, a conformal Clifford torus,
an inverted catenoid, an inverted Enneper's minimal surface or an inverted Chen's minimal graph must be close to these surfaces in the $W^{2,2}$-norm. Moreover, we apply these results to prove a corresponding rigidity result for complete, connected and non-compact surfaces.     
\end{abstract}
\maketitle
\section{Introduction}

A classical result in differential geometry is a version of Codazzi's theorem which states that every closed, connected and immersed surface $\Sigma \subset \R^n$ with vanishing traceless second fundamental form $\circo A \equiv 0$ is isometric to a round sphere. Such a result is called a rigidity theorem, a curvature condition gives us a classification of the possible geometries. In this paper, we will be interested in quantitative rigidity results, which show that if we relax the conditions of a rigidity theorem, we show that instead of being isometric to fixed object, we are close in a suitable norm. For surfaces in $ \R^3$, Codazzi's theorem was improved by DeLellis-M\"uller where they showed the following quantitative rigidity result. 
\begin{theorem}[{\cite[Theorem 1.1]{DeLellis-Muller:2005}}]\label{thm1}
Let $\Sigma \subset \R^3$ denote a smooth compact connected surface without boundary and with area, $\mu(\Sigma) = 4\pi$. If $||\circo A||_{L^2(\Sigma)}^2 \le 8\pi$, then there exists a conformal parameterisation 
$\psi: \mbb S^2 \to \Sigma$ and a vector $c_\Sigma\in \R^3$ such that 
\begin{align}
 ||\psi -(c_\Sigma+id)||_{W^{2,2}(\mbb S^2,\R^3)} \le C ||\circo A||_{L^2(\Sigma)}, \label{delm}
\end{align}
where $C$ is a universal constant.
\end{theorem}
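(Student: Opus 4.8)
The plan is to follow the familiar scheme for such statements: fix the topology, pass to a conformal chart, and then estimate in turn the conformal factor, the mean curvature, and the immersion itself, everything being viewed as a perturbation of the standard unit sphere $\iota\colon\mbb S^2\hookrightarrow\R^3$. For the topology one uses that in $\R^3$ one has $|A|^2=|\circo A|^2+\tfrac12 H^2$ and $K=\tfrac14 H^2-\tfrac12|\circo A|^2$, so that Gauss--Bonnet gives $\tfrac14\int_\Sigma H^2\,d\mu=2\pi\chi(\Sigma)+\tfrac12\int_\Sigma|\circo A|^2\,d\mu$. Since a closed surface in $\R^3$ is orientable, the Willmore inequality $\tfrac14\int_\Sigma H^2\,d\mu\ge4\pi$ and the hypothesis $\int_\Sigma|\circo A|^2\,d\mu\le8\pi$ force $\chi(\Sigma)\ge0$; the borderline $\chi(\Sigma)=0$ would give $\tfrac14\int H^2=4\pi$, hence $\Sigma$ a round sphere by the equality case of the Willmore inequality, which is absurd, so $\chi(\Sigma)=2$ and $\Sigma$ is a topological sphere. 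By uniformisation there is a conformal diffeomorphism $\psi\colon\mbb S^2\to\Sigma$, unique up to a M\"obius transformation of $\mbb S^2$; write $\psi^{*}g_{\R^3}=e^{2v}g_0$ with $g_0$ the round metric of curvature $1$. Because $\mu(\Sigma)=4\pi=\Vol(\mbb S^2,g_0)$ one has $\int_{\mbb S^2}e^{2v}\,d\Vol_{g_0}=4\pi$, while $\int_{\mbb S^2}|\nabla_{g_0}\psi|^2\,d\Vol_{g_0}=2\mu(\Sigma)=8\pi$ and $\Delta_{g_0}\psi=e^{2v}\vec H$; hence, modulo a translation, $\psi$ is automatically bounded in $W^{2,2}(\mbb S^2,\R^3)$ as soon as $v$ is controlled, which already proves the stated inequality with a non-sharp constant throughout the range $\int|\circo A|^2\le8\pi$. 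Write $\e:=\|\circo A\|_{L^2(\Sigma)}$; it remains to obtain the quantitative, linear-in-$\e$ bound.

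The decisive preliminary step is control of the conformal factor. The Gauss equation reads $-\Delta_{g_0}v=K_\Sigma e^{2v}-1$ on $(\mbb S^2,g_0)$, and the right-hand side is the Gauss curvature density $\nu^{*}\omega_{\mbb S^2}$, the pull-back of the area form under the Gauss map $\nu$; this is a null Lagrangian, a finite sum of Jacobian determinants of the components of $\nu$. Since $\int_{\mbb S^2}|\nabla\nu|^2=\int_\Sigma|A|^2\,d\mu=8\pi+2\e^2$ is finite, a Wente-type compensated-compactness estimate — applied, near the upper end of the admissible range where $\int_\Sigma|A|^2$ exceeds the usual $8\pi$ threshold, after decomposing $\mbb S^2$ into pieces of small total curvature in the spirit of H\'elein and M\"uller--\v{S}ver\'ak — gives the a priori bound $\|v\|_{L^\infty(\mbb S^2)}+\|\nabla v\|_{L^2(\mbb S^2)}\le C$. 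Thus the metric $e^{2v}g_0$ is uniformly equivalent to $g_0$, and all the functional inequalities we shall need on $\Sigma$ have universally controlled constants. Using the conformal invariance of the Dirichlet energy in dimension two, the contracted Codazzi equation $\nabla_g H=2\Div_g\circo A$ — dualised against the Poisson equation on $\Sigma$, which is well posed precisely because $\Sigma$ is a sphere (there are no non-trivial holomorphic quadratic differentials) — then gives a quantitative Hopf-type estimate $\|H-\bar H\|_{L^2(\Sigma)}\le C\e$, where $\bar H=(4\pi)^{-1}\int_\Sigma H\,d\mu$. Combined with $\|H-\bar H\|_{L^2(\Sigma)}^2=\int_\Sigma H^2\,d\mu-4\pi\bar H^2=16\pi+2\e^2-4\pi\bar H^2$ this forces $|\bar H^2-4|\le C\e^2$, and fixing the sign of $\bar H$ (so that $\vec H$ is close to $-2\iota$) one obtains $\|H-2\|_{L^2(\Sigma)}\le C\e$.

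Now everything is a perturbation of the round sphere, and the final estimate comes from a linearisation. Using the remaining M\"obius freedom to normalise $\psi$ so that $v$ is $L^2$-orthogonal to the $\ell=1$ spherical harmonics — exactly the kernel of the linearised operator $-\Delta_{g_0}-2$, so that this choice removes the conformal degeneracy — one rewrites the Gauss equation as $(-\Delta_{g_0}-2)v=N(v)+\tfrac14(H^2-4)e^{2v}-\tfrac12|\circo A|^2 e^{2v}$ with $N(v)$ at least quadratic in $v$, and inverts on the orthogonal complement of the kernel to get $\|v\|_{W^{2,2}(\mbb S^2)}\le C\e$; by the two-dimensional Sobolev embedding this also gives $\|v\|_{L^\infty(\mbb S^2)}\le C\e$. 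Substituting this and the mean-curvature estimate into $\Delta_{g_0}(\psi-\iota)=(e^{2v}-1)\vec H+(\vec H-\vec H_0)$, where $\vec H_0=-2\iota$ is the mean curvature vector of the standard sphere and $\vec H-\vec H_0$ is controlled in $L^2$ by $\|H-2\|_{L^2}$ and by $\|\nu-\iota\|_{L^2}$ — the latter bounded by $C\e$ through the analogous linearised equation for the Gauss map — one gets $\|\Delta_{g_0}(\psi-\iota)\|_{L^2(\mbb S^2)}\le C\e$. Solving this Poisson equation on the fixed manifold $(\mbb S^2,g_0)$ and subtracting the mean value $c_\Sigma:=(4\pi)^{-1}\int_{\mbb S^2}\psi\,d\Vol_{g_0}$ yields $\|\psi-(c_\Sigma+\iota)\|_{W^{2,2}(\mbb S^2,\R^3)}\le C\e$, as asserted.

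The main obstacle is the conformal factor. Establishing the a priori $L^\infty$-bound on $v$ in the full range $\e^2\le8\pi$ requires the Wente/compensated-compactness machinery and, since $\int_\Sigma|A|^2=8\pi+2\e^2$ can lie above the standard $8\pi$ threshold for the Gauss map, a careful patching argument; and upgrading this to the quantitative bound $\|v\|_{W^{2,2}}\le C\e$ forces one to choose the conformal parametrisation so as to kill the kernel of the linearised operator — without such a normalisation the statement is simply false, since a strongly anisotropic M\"obius reparametrisation leaves $\Sigma$ unchanged while moving $\psi$ far from every round sphere. A further delicate point is the functional-analytic bookkeeping (Hardy space $\mathcal{H}^1$, $\mathrm{BMO}$, Lorentz spaces) needed to control the borderline interaction terms that appear when one transfers estimates between $\Sigma$ and $(\mbb S^2,g_0)$.
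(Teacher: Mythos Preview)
The paper does not prove Theorem~\ref{thm1}. It is quoted verbatim from De~Lellis--M\"uller \cite{DeLellis-Muller:2005} as background and motivation; the only information the present paper gives about its proof is that ``an important ingredient \dots\ were the analytical results of M\"uller--\v{S}ver\'ak \cite{Muller1995}'' and that ``DeLellis--M\"uller used the Codazzi--Mainardi equations and this is one instance where the smoothness of the surface was necessary.'' There is therefore nothing in the paper to compare your attempt against line by line.

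That said, your sketch is a faithful outline of the actual De~Lellis--M\"uller argument: the $L^\infty$ control of the conformal factor via the Jacobian/Wente structure of $K e^{2v}$ (this is precisely the M\"uller--\v{S}ver\'ak input the paper alludes to), the estimate $\|H-\bar H\|_{L^2}\le C\|\circo A\|_{L^2}$ obtained from the contracted Codazzi equation (the second ingredient the paper highlights, and the place where smoothness enters), the M\"obius normalisation to kill the kernel of $-\Delta_{g_0}-2$, and the final linearisation. Your remarks about the difficulties --- the patching needed because $\int|A|^2$ may exceed $8\pi$, and the necessity of fixing the M\"obius gauge --- are exactly the delicate points of the original proof.

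It is worth contrasting this with what the present paper actually does. Its own rigidity result, Theorem~\ref{rigidity}, is proved by a completely different, indirect route: a contradiction/compactness argument based on the convergence theory for $W^{2,2}$ conformal immersions (Proposition~\ref{prop_globalconv} and Proposition~\ref{lem_strong}). That method avoids the Codazzi equation entirely --- hence works for non-smooth $W^{2,2}$ immersions and in arbitrary codimension --- but the price is that the resulting constant $C(\delta)$ is non-explicit and need not be linear in $\|\circo A\|_{L^2}$, whereas the direct De~Lellis--M\"uller argument you sketch yields the sharp linear dependence $C\|\circo A\|_{L^2}$.
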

The assumption $||\circo A||_{L^2(\Sigma)}^2 \le 8\pi$ implies that $\Sigma$ is topologically a sphere. A result of Jost \cite{jost91} (see also Morrey \cite{morrey66}) then yields the existence of a conformal parameterisation $\tilde{\psi}:\mbb S^2 \to \Sigma$.
The major contribution of DeLellis-M\"uller was to show that there exists a conformal parameterisation satisfying \eqref{delm}. An important ingredient in the proof were the analytical results of M\"uller-{\v{S}}ver{\'a}k \cite{Muller1995} for surfaces with finite total curvature. We remark that DeLellis-M\"uller used the Codazzi-Mainardi equations and this is one instance were the smoothness of the surface was necessary.
Finally, we mention that DeLellis-M\"uller \cite{DeLellis-Muller:2006} extended their analysis to show that the conformal factor of the pull-back metric by $\Psi$ of $\Sigma$ is $L^\infty$-close to $1$. Moreover, higher dimensional subcritical variants of the Theorem \ref{thm1} have recently been obtained in \cite{Perez}.

Applications of the Theorem \ref{thm1} include the construction of foliations of asymptotically flat and hyperbolic manifolds by surfaces of constant or prescribed mean curvature \cite{Metzger:2007ce}, \cite{NevesTian} and by surfaces of Willmore type \cite{lamm-metzger-schulze:2009}.
See also \cite{LammMetzger2010}, \cite{LammMetzger2012} where the above theorem has been used in order to construct and locate minimisers of the Willmore functional with small area in closed Riemannian 3-manifolds. 

Recently, interesting intrinsic stability results have been obtained in \cite{DeLellis-Topping:2012}, \cite{GeWang2010} and \cite{GeWang2011}. Other examples of very important stability results include the harmonic approximation lemma \cite{Simon1996} and the geometric rigidity result of \cite{friesecke02}.

We note that for immersions $f:\Sigma \to \R^n$ the Willmore functional is defined by
\[
 \mc W (f)= \frac14 \int_\Sigma |H|^2 d\mu_g.
\]
Using the Gauss equations 
\begin{align}
\frac14 |H|^2-\frac12 |\circo A|^2=\frac12 (|H|^2-|A|^2)=K \label{gauss} 
\end{align}
and the Gauss-Bonnet theorem, we can rewrite the Willmore functional as follows
\begin{align}
 \mc W(f)= \frac14 \int_\Sigma |A|^2 d\mu_g+\pi \chi(\Sigma)=\frac12 \int_\Sigma |\circo A|^2 d\mu_g +2\pi \chi(\Sigma), \label{Willeq}
\end{align}
where $\chi(\Sigma)$ denotes the Euler characteristic of $\Sigma$.

Hence, in the case $\chi(\Sigma)=2$, the assumption $||\circo A||_{L^2(\Sigma)}^2 \le 8\pi$ is equivalent to the fact that $\mc W(f) \le 8\pi$ or $\int_\Sigma |A|^2 d\mu_g \le 24\pi$.
\newline
In this paper, we will extend Theorem \ref{thm1} to non-smooth surfaces in higher codimensions. In order to do this we work in the class $W^{2,2}_{conf}$ of conformal immersions of a surface. Roughly speaking these are $W^{2,2}$ immersions $f:\Sigma \to \R^n$ for which the pullback metric in local conformal coordinates is conformal to the Euclidean metric with a conformal factor which belongs to $L^\infty$.  
Our first main result is the following
\begin{theorem}\label{rigidity}
There exists $\delta_0>0$ such that for every $0<\delta<\delta_0$ and every immersion $f\in W^{2,2}_{\text{conf}}(\mbb S^2, \R^n)$ with conformal factor $u$, satisfying $\int_{\mbb S^2} |A|^2 d\mu \le 8\pi+\delta$, there exists a constant $C(\delta)$, with $C(\delta)\to 0$ as $\delta \to 0$, and a standard immersion $f_{\text{round}}\in W^{2,2}_{\text{conf}}(\mbb S^2,\R^n)$ of a round sphere such that 
\begin{align}
||f -f_{round}||_{W_u^{2,2}(\mbb S^2,\R^n)}  \le C(\delta). \label{closeness}
\end{align} 
\end{theorem}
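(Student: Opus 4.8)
The plan is to follow the strategy of DeLellis--Müller but working entirely within the class $W^{2,2}_{\text{conf}}$, thereby avoiding any appeal to the Codazzi--Mainardi equations (which required smoothness). The hypothesis $\int_{\mbb S^2}|A|^2\,d\mu \le 8\pi+\delta$ is, by \eqref{Willeq} with $\chi(\mbb S^2)=2$, equivalent to $\mc W(f)\le 4\pi+\tfrac14\delta$, so $f$ is an $L^2$-small perturbation (in energy) of a Willmore minimiser. First I would argue by contradiction and compactness: suppose the conclusion fails, so there is a sequence $f_k\in W^{2,2}_{\text{conf}}(\mbb S^2,\R^n)$ with $\int_{\mbb S^2}|A_k|^2\,d\mu_k\le 8\pi+\delta_k$, $\delta_k\to 0$, but $\inf\|f_k-f_{\text{round}}\|_{W^{2,2}_{u_k}}\ge \varepsilon_0>0$ for every standard round immersion. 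After invariance reductions — composing with Möbius transformations of $\mbb S^2$ and with translations/dilations of $\R^n$, all of which preserve both $\int|A|^2$ and the class $W^{2,2}_{\text{conf}}$ — I would normalize the $f_k$ (e.g. fix the induced area and center of mass, and prevent concentration of the conformal factor) using the machinery of Müller--Šverák \cite{Muller1995} together with the conformal-invariance analysis available for $W^{2,2}_{\text{conf}}$ immersions.

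The second step is to extract a limit. With the normalization in place, the $f_k$ are bounded in $W^{2,2}$, the conformal factors $u_k$ are bounded in $L^\infty$ (the defining property of the class) with $\int e^{2u_k}$ controlled, so after passing to a subsequence $f_k\rightharpoonup f_\infty$ weakly in $W^{2,2}$ and $u_k \rightharpoonup u_\infty$; the lower semicontinuity of $\int|A|^2$ under this convergence gives $\int_{\mbb S^2}|A_\infty|^2\,d\mu_\infty\le 8\pi$, hence $f_\infty$ realizes the equality case, so $\circo A_\infty\equiv 0$ and by Codazzi's theorem $f_\infty$ parametrizes a round sphere, i.e. $f_\infty=f_{\text{round}}$ is a standard immersion. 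The key point — and here is where I expect most of the work to go — is to upgrade this weak convergence to \emph{strong} $W^{2,2}_{u}$ convergence, which contradicts $\|f_k-f_{\text{round}}\|\ge\varepsilon_0$. Strong convergence of $f_k$ in $W^{1,2}$ is cheap; the issue is the second derivatives. Since $\int|A_k|^2\to\int|A_\infty|^2$ (squeezed between $8\pi+\delta_k$ and the lower bound $8\pi$), there is no loss of curvature energy in the limit, so no bubbling occurs; combined with the Müller--Šverák estimates relating $\|A_k\|_{L^2}$, $\|u_k\|_{L^\infty}$ and the full $W^{2,2}$ norm of the immersion, and an $\varepsilon$-regularity argument ruling out concentration of $|A_k|^2$ at points, one obtains $A_k\to A_\infty$ strongly in $L^2$ and hence $f_k\to f_\infty$ strongly in $W^{2,2}_{u_k}$.

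Finally, to obtain the \emph{quantitative} statement with an explicit modulus $C(\delta)$ rather than a bare contradiction argument, I would either (a) keep the argument quantitative throughout — tracking how each estimate degrades in $\delta$ via the Müller--Šverák bounds and the Gauss equation \eqref{gauss}, which controls $\|\circo A\|_{L^2}^2 = \tfrac12(\|A\|_{L^2}^2 - \|H\|_{L^2}^2 + 2\int K) \le \tfrac12(8\pi+\delta) - (4\pi) = \tfrac{\delta}{2}$ after using Gauss--Bonnet, so that $\|\circo A\|_{L^2}\le\sqrt{\delta/2}$ is the small quantity driving everything — or (b) reconstruct $C(\delta)$ abstractly from the compactness argument: if no such modulus existed, there would be $f_k$ with $\delta_k\to 0$ violating \eqref{closeness} for a fixed bound, which is exactly the contradiction already derived. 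The main obstacle is therefore the no-bubbling / strong second-order convergence step, which hinges on the sharpness of the $8\pi$ threshold: below $8\pi$ the surface is forced to be a sphere with no room for a second concentrated piece, and the Li--Yau-type inequality together with the Gauss--Bonnet bookkeeping makes this rigidity quantitative.
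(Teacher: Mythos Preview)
Your proposal follows essentially the same contradiction--compactness strategy as the paper: assume a bad sequence, normalise, extract a weak limit which must be a round sphere by the energy squeeze $8\pi \le \int|A_\infty|^2 \le \liminf \int|A_k|^2 = 8\pi$, then upgrade to strong $W^{2,2}_{u_k}$ convergence using that no curvature energy is lost (this is exactly Proposition~\ref{lem_strong}). Two places where the paper is more careful than your sketch: (i) the global compactness (Proposition~\ref{prop_globalconv}, from Kuwert--Li) may force the M\"obius transformations $\sigma_k$ to include an \emph{inversion}, not just translations and dilations; since the weighted norm is not literally invariant under inversions, the paper first proves the estimate for $\sigma_k\circ f_k$ and then, in a separate second step, uses explicit pointwise bounds on $I\circ g - I\circ g_0$ together with Simon's diameter estimate to transfer the bound back to $f_k$ itself --- your normalisation paragraph skips this; (ii) you classify the limit by invoking Codazzi's theorem, but the limit is a priori only a $W^{2,2}_{\text{conf}}$ immersion, so the classical umbilic rigidity needs justification; the paper avoids this by observing $\mc W(f_\infty)=4\pi$, inverting at a point on the image, and using \eqref{inversion2}--\eqref{inversion3} to get a minimal surface with $\int|A|^2=0$, hence a plane, hence $f_\infty$ is a standard round sphere.
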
  
For the definition of the norm $W^{2,2}_u(\mbb S^2,\R^n)$ we refer the reader to \eqref{norm}. 
We also refer the reader to Remark \ref{compare} where we compare the assumptions and results of Theorem \ref{thm1} and Theorem \ref{rigidity}. Note that $\int_{\Sigma} |A|^2 d\mu \ge 8\pi$ for all immersions of a closed surface and equality holds if and only if $\Sigma$ is a round sphere.

Contrary to the proof of Theorem \ref{thm1}, we argue by contradiction in order to show \eqref{closeness}. Therefore, we are naturally led to studying sequences of conformal immersions of the sphere with uniformly bounded total curvature.
In a recent paper Kuwert-Li \cite{Kuwert2010} showed that such a sequence converges weakly (modulo the composition with suitably chosen M\"obius transformations) in $W^{2,2}_{loc}$ away from an at most finite set of points to a branched conformal immersion (see section $2$ for a definition of branched conformal immersions). In order to prove Theorem \ref{rigidity} 
we show that in our situation this convergence can be improved to everywhere strong convergence in $W^{2,2}$. The limit is shown to be a standard smooth immersion of a round sphere which gives the desired contradiction. 

Note that after this paper has been finished, the first author and Sch\"atzle extended the results of DeLellis and M\"uller to arbitrary codimensions, see \cite{LS}.

To summarise, the main ingredients in the proof are convergence results for sequences of (branched) conformal immersions, an understanding of the possible singular set of the limit (in the above situation the singular set is empty) together with a classification result for the limiting (branched) conformal immersion.

This method is flexible and has applications to several other problems. In particular, assuming the validity of the generalised Willmore conjecture, we show a rigidity result for minimisers of the Willmore energy of arbitrary genus in $\R^n$ (see Theorem \ref{genus}). In particular, we show a rigidity result for the Clifford torus in $\R^3$ within its own conformal class. 

We extend the method to handle conformal immersions of the sphere in $\R^3$ with either exactly one point of multiplicity two and two preimage points or exactly one branch point of branch order two. 
Using an inversion formula for the Willmore energy derived in \cite{Nguyen2011} (see also Theorem \ref{thm_three}), we can show that $\int |A|^2 d\mu$ of these surfaces is always bigger than or equal to $24\pi$, respectively $32\pi$ and by a classification result of Osserman \cite{Osserman1964} the infimum is attained by an inversion of the catenoid respectively an inversion of Enneper`s minimal surface.
More precisely, we prove the following two theorems, firstly for multiplicity two points we have that
\begin{theorem}\label{rigidity2}
There exists $\delta_{cat}=\delta_0>0$ such that for every $0<\delta<\delta_0$ and every immersion $f\in W^{2,2}_{\text{conf}}(\mbb S^2, \R^3)$ with conformal factor $u$ which has exactly one point of multiplicity two $ x \in f ( \mbb{S}^2)$ with $ f^{-1} (x) = \{ p_{1}, p_{2} \}$ and which satisfies $24 \pi \leq  \int_{\mbb S^2} |A|^2 d\mu \le 24\pi+\delta$, there exists a M\"obius transformation $\sigma:\R^n \to \R^n$, a reparameterisation $\phi:\mbb S^2 \to \mbb S^2$, a
constant $C(\delta)$, with $C(\delta)\to 0$ as $\delta \to 0$, and a standard immersion $f_{\text{cat}}\in W^{2,2}_{\text{conf}}(\mbb S^2,\R^3)$ of an inverted catenoid with
\begin{align}
||\sigma \circ f \circ \phi-f_{cat}||_{W_u^{2,2}(\mbb S^2,\R^3)} \le C(\delta)\sqrt{\mu_g(\mbb S^2)}. \label{close}
\end{align} 
\end{theorem}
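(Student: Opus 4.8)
The plan is to argue by contradiction, mirroring the strategy used for Theorem \ref{rigidity}. Suppose the statement fails; then there is a sequence $\delta_k \to 0$ and conformal immersions $f_k \in W^{2,2}_{\text{conf}}(\mbb S^2,\R^3)$, each with exactly one double point $x_k = f_k(p_1^k)=f_k(p_2^k)$ and $24\pi \le \int_{\mbb S^2}|A_{f_k}|^2\, d\mu_{g_k} \le 24\pi + \delta_k$, such that for no M\"obius transformation $\sigma$, reparameterisation $\phi$, and inverted catenoid $f_{\text{cat}}$ does \eqref{close} hold with a constant tending to zero. Since the relevant quantities are scale invariant (and $\int|A|^2$ is M\"obius invariant up to the Gauss--Bonnet term, equivalently $\mc W$ is M\"obius invariant), after rescaling we may normalise $\mu_{g_k}(\mbb S^2)$ and, using M\"obius transformations of $\R^3$ and reparameterisations of $\mbb S^2$, place $f_k$ in a good gauge in which the conformal factors $u_k$ and the points $p_1^k,p_2^k$ are controlled.

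Next I would invoke the compactness theorem of Kuwert--Li \cite{Kuwert2010}: after composing with suitable M\"obius transformations, $f_k \rightharpoonup f_\infty$ weakly in $W^{2,2}_{\loc}(\mbb S^2 \setminus S, \R^3)$ away from a finite singular set $S$, where $f_\infty$ is a (possibly branched) conformal immersion, with lower semicontinuity of the total curvature. The heart of the argument is to pin down the limit. Using the inversion formula for the Willmore energy from \cite{Nguyen2011} (Theorem \ref{thm_three}) together with the Li--Yau inequality, a conformal immersion of $\mbb S^2$ into $\R^3$ with a point of multiplicity two satisfies $\int |A|^2\, d\mu \ge 24\pi$, with equality characterised --- via Osserman's classification \cite{Osserman1964} of complete minimal surfaces with two embedded planar ends and finite total curvature --- exactly by the inverted catenoid. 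Hence, if I can show that $f_\infty$ is again such an immersion and that \emph{no curvature is lost} in passing to the limit, then $\int |A_{f_\infty}|^2\, d\mu = 24\pi$, forcing $f_\infty$ to be an inverted catenoid; combined with upgrading the convergence to strong $W^{2,2}$ this contradicts the failure of \eqref{close}.

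Establishing these two facts is where the real work lies. For the no-loss-of-energy statement I would run a neck/bubble analysis at each point of $S$, showing that any concentrated curvature is a multiple of $4\pi$ and that a genuine bubble together with a base surface carrying a double point would already cost at least $32\pi > 24\pi + \delta_k$ for $k$ large; hence $S$ contributes nothing, the convergence is strong in $W^{2,2}_{\loc}(\mbb S^2\setminus S)$, and $f_\infty$ is a branched conformal immersion of $\mbb S^2$ with $\int|A_{f_\infty}|^2 \le 24\pi$. One must then rule out branch points of $f_\infty$ (which by \cite{Nguyen2011} could otherwise lower the energy below the double-point threshold) and check that $p_1^k,p_2^k$ neither collide nor escape under the chosen normalisations, so that $f_\infty$ retains exactly one multiplicity-two point and no branch point; then $\int|A_{f_\infty}|^2 = 24\pi$ and $f_\infty$ equals $f_{\text{cat}}$ up to M\"obius transformation and reparameterisation. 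Finally, strong $W^{2,2}_u$ convergence on all of $\mbb S^2$ follows from the absence of singular points together with the estimates of M\"uller--{\v{S}}ver{\'a}k \cite{Muller1995} controlling $u_k \to u_\infty$ in $L^\infty\cap W^{1,2}$ as in \cite{Kuwert2010}, yielding the bound \eqref{close} with $C(\delta_k)\to 0$ --- the contradiction. The main obstacle is precisely the simultaneous control of the degenerating double point and of possible branch points: one must choose the M\"obius normalisation so that the double point stays at a finite, non-degenerate position while at the same time preventing curvature from concentrating there or escaping to infinity, so that the limit is the extremal configuration rather than a round sphere that has shed mass.
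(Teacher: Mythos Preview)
Your overall strategy matches the paper's, but you correctly flag and then leave unresolved the decisive step: keeping the double point in the limit. The paper handles this concretely. After the first application of Proposition~\ref{prop_globalconv} one reparametrises so that the two preimages of the double point are the poles $\{N,S\}$, then inverts the image at the double point to obtain $W^{2,2}$-conformal immersions $h_k$ of the cylinder $\R\times\mbb S^1$ into $\R^3$ with two complete ends. The extrinsic diameter of $h_k(\{v\}\times\mbb S^1)$ tends to infinity as $v\to\pm\infty$ (via the isoperimetric inequality in Theorem~5.2 of \cite{li91} and Corollary~4.2.5 of \cite{Muller1995}), so $\inf_v\diam(h_k(\{v\}\times\mbb S^1))>0$; normalising this infimum to $1$ and arguing as in Theorem~5.3 of \cite{Kuwert2010} produces a limit that still has two ends, hence after re-inversion a branched conformal immersion of $\mbb S^2$ that genuinely carries a double point. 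Without this device nothing in your outline prevents the preimages from colliding or being separated by the M\"obius normalisation, and the limit could well be a round sphere that has shed $16\pi$ of curvature --- exactly the degeneration you warn about in your last sentence.

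Your elimination of the singular set is also looser than the paper's. The concentration threshold in $\R^3$ is $8\pi$, not $4\pi$, and the paper does not count bubbles directly. Instead it uses that each $p\in\mc S$ forces a branch point of the limit with $m(p)\ge 1$ (Theorem~3.1 of \cite{Kuwert2010}), feeds this into the Gauss--Bonnet formula \eqref{gauss-bonnet} to get $\int_{\mbb S^2}|A|^2\,d\mu=24\pi-4\pi\sum_{p_i\in\mc S} m(p_i)$, and combines this with the lower bound $\int|A|^2\,d\mu\ge 16\pi$ coming from the surviving double point via \eqref{inversion3} to obtain $\sum m(p_i)\le 2$. The three admissible configurations are then excluded one by one: a single point with $m=2$ contradicts $\mc W(f)=8\pi$ via \eqref{inversion2}; a single point with $m=1$ gives $\int|A|^2=20\pi$ and hence $24\pi\ge 20\pi+8\pi$; two points with $m=1$ give $\int|A|^2=16\pi$ and hence $24\pi\ge 16\pi+16\pi$. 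Your estimate ``at least $32\pi$'' covers only the last case and does not by itself rule out a single branch point of order one or two appearing on the limit.
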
  

And for a branch point of multiplicity three,
\begin{theorem}\label{rigidity3}
There exists $\delta_{Enn}=\delta_0>0$ such that for every $0<\delta<\delta_0$ and every immersion $f\in W^{2,2}_{\text{conf}, br }(\mbb S^2, \R^3)$ with conformal factor $u$ where $ f $ has exactly one branch point $p\in \mbb S^2$ of branch order $m(p) = 2 $, and which satisfies $32 \pi \leq  \int_{\mbb S^2} |A|^2 d\mu \le 32\pi+\delta$ then there exists a M\"obius transformation $\sigma:\R^n \to \R^n$, a reparameterisation $\phi:\mbb S^2 \to \mbb S^2$, a
constant $C(\delta)$, with $C(\delta)\to 0$ as $\delta \to 0$, and a standard immersion $f_{\text{Enn}}\in W^{2,2}_{conf,br}(\mbb S^2,\R^3)$ of an inverted Enneper's minimal surface with
\begin{align}
||\sigma \circ f \circ \phi-f_{Enn}||_{W_u^{2,2}(\mbb S^2,\R^3)} \le C(\delta)\sqrt{\mu_{g}(\mbb S^2)}. \label{Enn_close}
\end{align} 
\end{theorem}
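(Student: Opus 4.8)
The plan is to argue by contradiction, following the same scheme that was used for Theorem~\ref{rigidity} and Theorem~\ref{rigidity2}, but now in the branched setting. Suppose no such $\delta_0$ exists; then there is a sequence of conformal immersions $f_k \in W^{2,2}_{\text{conf},br}(\mbb S^2,\R^3)$, each having exactly one branch point $p_k$ of branch order $2$, with $32\pi \le \int_{\mbb S^2}|A_{f_k}|^2\,d\mu_{g_k} \le 32\pi + \frac1k$, but such that for every M\"obius transformation $\sigma$, reparameterisation $\phi$, and every standard inverted Enneper immersion $f_{\text{Enn}}$, the rescaled $W^{2,2}_u$-distance of $\sigma\circ f_k\circ\phi$ to $f_{\text{Enn}}$ stays bounded below. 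After composing with suitable M\"obius transformations (to prevent concentration or degeneration) and normalising the area, one is in the setting of the Kuwert--Li compactness result \cite{Kuwert2010}: the sequence converges weakly in $W^{2,2}_{loc}$ away from a finite set of bad points to a branched conformal immersion $f_\infty$ of $\mbb S^2$.

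The core of the argument is to identify $f_\infty$ and to upgrade the convergence to strong $W^{2,2}$ convergence on all of $\mbb S^2$. For the identification, I would use the inversion formula for the Willmore energy from \cite{Nguyen2011} (cf.\ Theorem~\ref{thm_three}): inverting at a point and tracking the branch point shows that $\int|A|^2\,d\mu \ge 32\pi$ for any such surface, with equality forcing the inverted surface to be a complete minimal surface in $\R^3$ with finite total curvature, one end, and total curvature $-4\pi$ — which by Osserman's classification \cite{Osserman1964} is exactly Enneper's minimal surface. Combined with lower semicontinuity of $\int|A|^2\,d\mu$ under the weak convergence and the upper bound $32\pi + \frac1k \to 32\pi$, this pins down $f_\infty$ (up to the M\"obius and reparameterisation freedom) as a standard inverted Enneper immersion, and simultaneously shows that no energy is lost at the bad points, so the bad set is in fact empty and the branch point of $f_k$ converges to the single branch point of $f_\infty$. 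The passage from no-energy-loss to strong $W^{2,2}$ convergence then proceeds as in the earlier proofs, via the local estimates of M\"uller--{\v{S}}ver{\'a}k \cite{Muller1995} and Kuwert--Li for the conformal factor together with the $\e$-regularity for the second fundamental form, now carried out near the branch point with the appropriate weight; strong convergence contradicts the lower bound on the distance, closing the argument.

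The main obstacle, and the reason this needs a separate treatment from Theorem~\ref{rigidity}, is the presence of the branch point: one must control the conformal factor $u_k$ and the immersions $f_k$ uniformly \emph{near} the branch points $p_k$, where the metric degenerates like $|z-p_k|^{2m(p_k)}$ in conformal coordinates. This requires showing that the branch order cannot split or migrate in the limit (so that the limiting surface again has a single branch point of order exactly $2$, rather than, say, two simple branch points or an unbranched surface with lower energy), and that the rescaling/M\"obius normalisation can be chosen so that neither the branch point nor any region of concentrated curvature escapes to infinity or collides. Here the sharp energy threshold $32\pi$ is essential: it is exactly the Willmore cost of one branch point of order two plus one end after inversion, so any alternative degeneration would strictly increase $\liminf\int|A_{f_k}|^2\,d\mu$ above $32\pi$, contradicting the hypothesis. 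Making this dichotomy quantitative — and handling the weighted Sobolev estimates at the branch point so that the final bound scales correctly with $\sqrt{\mu_g(\mbb S^2)}$ — is where the real work lies.
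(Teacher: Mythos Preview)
Your overall strategy matches the paper's, but there is a genuine gap in the ``no energy is lost, hence the bad set is empty'' step. Using Gauss--Bonnet as the paper does, the limit $f$ satisfies
\[
\int_{\mbb S^2}|A|^2\,d\mu \;=\; 48\pi - 4\pi\Big(\chi(\mbb S^2)+m(p)+\sum_{p_i\in\mc S}m(p_i)\Big)
\;=\; 32\pi - 4\pi\sum_{p_i\in\mc S}m(p_i),
\]
and since the branch point of order $2$ forces $\int|A|^2\ge 24\pi$, one only gets $\sum m(p_i)\le 2$. Cases with $\sum m(p_i)=1$ or with two simple points are indeed excluded by the energy comparison you sketch, but the case $\mc S=\{p_1\}$ with $m(p_1)=2$ is \emph{not}: here $\int|A|^2 d\mu=24\pi$ and $\alpha(\{p_1\})=8\pi$ \emph{exactly}, which is perfectly consistent with $\lim_k\int|A_k|^2=32\pi$. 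So your claim that ``any alternative degeneration would strictly increase $\liminf\int|A_{f_k}|^2$ above $32\pi$'' is false at precisely this threshold, and the limit $f$ in this case has \emph{two} branch points of order $2$ and is not an inverted Enneper.

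The paper deals with this residual case by a separate blow-up argument: rescaling around the concentration point $p_1$ (a different choice of M\"obius transformation $\sigma_k$) one obtains a new sequence whose limit is a complete minimal surface with $\int|A|^2=8\pi$ and one end of multiplicity three, hence Enneper by Osserman's theorem; after a further inversion this rescaled sequence converges \emph{strongly} in $W^{2,2}$ to an inverted Enneper immersion, contradicting \eqref{Enn_close2}. In other words, the M\"obius freedom in the statement is essential not just for normalisation but to capture a possible bubble, and the proof must explicitly produce that bubble in the borderline case. You should also invoke the branch-point preservation result (Theorem~\ref{branchpreserv}) rather than the plain Kuwert--Li compactness, since otherwise the order-$2$ branch point of $f_k$ could in principle fall into $\mc S$ and be lost in the first limit.
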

In higher codimensions we show a similar quantitative rigidity result for conformal immersions of the sphere with exactly one branch point of branch order one and Willmore energy close to the Willmore energy of an inversion of Chen's minimal graph (see Theorem \ref{rigidity4} for details).

Finally, in section 5, we combine the rigidity theorems in order to prove a quantitative rigidity result for complete surfaces with finite total curvature. M\"uller-{\v{S}}ver{\'a}k \cite{Muller1995} showed that all complete, connected and non-compact surfaces in $\R^n$ with $ \int _{ \Sigma} |A| ^ 2 d \mu < 8\pi$ for $n=3$ and $ \int _{ \Sigma} |A| ^ 2 d \mu \le 4\pi$ for $n\ge 4$, are embedded. This was extended by the second author in \cite{Nguyen2011} were it was shown that if one allows the inequality $ \int _ {\Sigma} |A|^2 d \mu \leq 8 \pi$ for $n=3$, then the surface is either embedded and conformal to the plane or isometric to a catenoid or Enneper's minimal surface. 
\begin{theorem}[\cite{Muller1995}, Corollary 4.3.2 and \cite{Nguyen2011}, Theorem 14]\label{1}
Let $ f:\Sigma\hookrightarrow \R^{n}$ be a complete, connected, non-compact surface immersed into $ \R^{n}$. Assume that $n=3$ and
\begin{align*}
\int_{\Sigma} |A|^{2}d\mu \leq 8 \pi.
\end{align*}
Then either $ \Sigma$ is embedded and conformal to the plane or isometric to a catenoid or not embedded and $ \Sigma$ is Enneper's minimal surface.

Assuming $n\ge 4$ and 
\begin{align*}
\int_{\Sigma} |A|^2 d\mu  \leq 4 \pi
\end{align*}
we have that $\Sigma$ is embedded and is either conformal to a plane or isometric to Chen's minimal graph.
\end{theorem}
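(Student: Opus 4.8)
\emph{Proof strategy.} The plan is to combine Huber's theorem on the conformal type of complete surfaces of finite total curvature with the regularity theory of M\"uller--\v{S}ver\'ak for conformal immersions, the Willmore inversion formula of \cite{Nguyen2011}, and Osserman's classification \cite{Osserman1964} of complete minimal surfaces of least total curvature. First I would observe that the Gauss equation \eqref{gauss} gives the pointwise bound $|K|\le\frac12|A|^2$, hence $\int_\Sigma|K|\,d\mu\le\frac12\int_\Sigma|A|^2\,d\mu<\infty$, so by Huber's theorem $\Sigma$ is conformally a closed Riemann surface $\bar\Sigma$ of some genus $\gamma$ with finitely many points $p_1,\dots,p_k$ removed ($k\ge1$), one puncture per end. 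Near each puncture $p_j$ I would analyse the Liouville equation $-\Delta u=Ke^{2u}$ for the conformal factor $e^{2u}$ of the pulled-back metric, with its integrable right-hand side; the M\"uller--\v{S}ver\'ak theory then provides a logarithmic decomposition of $u$ at $p_j$ with an integer coefficient $\theta_j\ge1$ equal to the area multiplicity of the $j$-th end, and --- crucially --- that after composition with a suitable inversion of $\R^n$ sending that end to a point, $f$ extends across $p_j$ to a branched $W^{2,2}$ conformal immersion with a branch point there of order $\theta_j-1$ (unbranched iff $\theta_j=1$); in particular the immersion compactifies to a branched conformal immersion $\hat f$ of $\bar\Sigma$. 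For $n=3$ the same $\varepsilon$-regularity (\cite{Muller1995}, Cor.\ 4.3.2) forces $f$ to be an embedding when $\int_\Sigma|A|^2<8\pi$, and likewise for $n\ge4$ when $\int_\Sigma|A|^2<4\pi$.

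Next comes the global bookkeeping. Combining an extended Gauss--Bonnet identity of the form $\int_\Sigma K\,d\mu=2\pi(2-2\gamma-k)-2\pi\sum_j\theta_j$ with the integrated Gauss equation $\int_\Sigma|A|^2\,d\mu=4\mathcal W(f)-2\int_\Sigma K\,d\mu$ yields an identity
\begin{align*}
\int_\Sigma|A|^2\,d\mu=4\mathcal W(f)+8\pi\gamma+4\pi\Bigl(\sum_{j=1}^k\theta_j+k-2\Bigr),
\end{align*}
whose correction terms are all nonnegative (since $\mathcal W(f)\ge0$, $\gamma\ge0$, $\theta_j\ge1$, $k\ge1$), the flat plane being the unique configuration in which they all vanish. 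Imposing $\int_\Sigma|A|^2\,d\mu\le8\pi$ (respectively $\le4\pi$ when $n\ge4$) then forces $\gamma=0$ and leaves only two possibilities: either $k=1$, $\theta_1=1$, so $\Sigma$ is conformal to $\C$ and, by the previous paragraph, embedded (in the borderline case the compactification $\hat f$ is unbranched, so a self-intersection would give $\mathcal W(\hat f)\ge8\pi$ by Li--Yau while the inversion formula gives $\mathcal W(\hat f)=\mathcal W(f)+4\pi\le6\pi$, resp.\ $\le5\pi$) --- this is the first alternative --- or the correction terms nearly saturate, which forces $\mathcal W(f)=0$, i.e.\ $\Sigma$ is minimal with total curvature $-4\pi$ (resp.\ $-2\pi$), in one of the topological types $(k,\theta_1)=(1,3)$, $(k,\theta_1,\theta_2)=(2,1,1)$ (resp.\ only $(k,\theta_1)=(1,2)$).

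It remains to treat the minimal cases and to close the one genuinely subtle point. For the minimal cases Osserman's classification \cite{Osserman1964} applies directly: the complete immersed minimal surfaces in $\R^3$ of total curvature $-4\pi$ are exactly the catenoid (embedded, two ends --- the type $(2,1,1)$) and Enneper's surface (one end, not embedded --- the type $(1,3)$), and the corresponding classification in $\R^4$ of total curvature $-2\pi$ isolates Chen's minimal graph (a graph, hence embedded --- the type $(1,2)$) alongside the affine plane. The obstacle I expect to be the real work is the sharp bound for the non-embedded case in $\R^3$: using the persistence of a self-intersection point in the compactification $\hat f$ together with the Li--Yau inequality for branched closed surfaces and the Willmore inversion formula, one must show that every non-embedded complete non-compact surface in $\R^3$ with finite total curvature satisfies $\int_\Sigma|A|^2\,d\mu\ge8\pi$, with equality only in the minimal type $(1,3)$ --- hence, by Osserman, only for Enneper's surface. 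This statement simultaneously rules out all the non-minimal borderline configurations (in particular a one-ended surface of end-multiplicity two in $\R^3$) and, combined with M\"uller--\v{S}ver\'ak's embeddedness below the threshold, leaves exactly the surfaces in the statement. It is precisely here, in converting the soft density and convergence information into the hard numerical threshold $8\pi$ (resp.\ $4\pi$) and the rigidity of the equality case, that the $\varepsilon$-regularity and density estimates of M\"uller--\v{S}ver\'ak and the inversion formula of \cite{Nguyen2011} are indispensable.
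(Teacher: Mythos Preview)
The paper does not actually prove this theorem; it is quoted from \cite{Muller1995} and \cite{Nguyen2011} and used as input for Section~5. That said, your strategy is the right one and matches both the cited sources and the paper's own proofs of the quantitative analogues in Section~5: bound $|K|\le\tfrac12|A|^2$, use the conformal compactification, feed the end data into the generalised Gauss--Bonnet formula \eqref{gauss-bonnet}, enumerate the admissible $(\gamma,k,\theta_1,\dots,\theta_k)$, and finish with Osserman's (resp.\ Chen's) classification.

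There is, however, a genuine gap in your case analysis for $n=3$. Your identity
\[
\int_\Sigma|A|^2\,d\mu=4\mc W(f)+8\pi\gamma+4\pi\Bigl(\sum_j\theta_j+k-2\Bigr)
\]
under the bound $\int|A|^2\le8\pi$ also allows the type $(\gamma,k,\theta_1)=(0,1,2)$, which gives only $\mc W(f)\le\pi$, not $\mc W(f)=0$; so this case neither falls into the ``conformal to a plane'' alternative nor into the minimal alternatives $(1,3)$, $(2,1,1)$. You try to sweep it up at the end via Li--Yau on the compactification, but that does not work: inverting at a point off the surface produces $\hat f$ with one branch point of order~$1$, so Li--Yau only yields $\mc W(\hat f)\ge8\pi$, while the inversion formula \eqref{inversion2} gives $\mc W(\hat f)=\mc W(f)+8\pi$, hence only the trivial $\mc W(f)\ge0$. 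No contradiction.

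The missing ingredient is White's quantisation theorem \cite{White1987}: for a complete immersed surface in $\R^3$ with finite total curvature one has $\int_\Sigma K\,d\mu\in4\pi\mathbb{Z}$ (as opposed to $2\pi\mathbb{Z}$ when $n\ge4$). For the type $(0,1,2)$ your Gauss--Bonnet identity gives $\int_\Sigma K\,d\mu=-2\pi$, which is excluded in codimension one. This is exactly how the paper handles the parallel step in Section~5 (see the sentence ``a result of White \cite{White1987} yields $\int_\Sigma K\,d\mu=0,\pm4\pi$''). Once you insert White's theorem, the remaining cases are precisely $(1,1)$, $(1,3)$ and $(2,1,1)$ for $n=3$, and $(1,1)$, $(1,2)$ for $n\ge4$, and the rest of your argument goes through.
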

We show that if we assume for some small enough $\delta>0$ the energy bounds $\int_{\Sigma} |A|^{2}d\mu \leq 8 \pi+\delta$ for $n=3$, respectively $\int_{\Sigma} |A|^{2}d\mu \leq 4 \pi+\delta$ for $n\ge 4$, then $\Sigma$ is either embedded and conformal to a plane, or, after an inversion and modulo a M\"obius transformation and a reparameterisation, the surface is $W^{2,2}$ close to an inverted catenoid or an inverted Enneper's minimal surface for $n=3$, respectively an inverted Chen's minimal graph for $n\ge 4$.
\newline
In the following we give a brief outline of the paper.

In section $2$ we recall the definition of conformal and branched conformal immersions. Moreover we restate an inversion formula for surfaces with branch points and ends and we mention various classification results for minimal surfaces which we need later on.

In section $3$ we collect a series of convergence results for branched conformal immersions. These results are partial extensions of earlier results of H{\'e}lein \cite{H'elein2002} and Kuwert-Li \cite{Kuwert2010} however we obtain new strong convergence results under the assumption of no energy loss.

In section $4$ we prove Theorems \ref{rigidity}--\ref{rigidity3} and the above mentioned rigidity result for higher genus minimisers of $\mc W$ and inversions of Chen's minimal graph.

In section $5$ we show the rigidity result for complete, connected and non-compact surfaces immersed in $\R^n$ satisfying certain bounds on the total curvature.

\section{Preliminaries}
\begin{definition}\label{confsp}
Let $ \Sigma$ be a Riemann surface. A map $ f \in W^{2,2}_{loc}( \Sigma, \R^n)$ is called a conformal immersion if in any local conformal coordinates $ (U, z )$, the metric $ g_{i j} = \langle \partial _i f , \partial_j f \rangle $ is given by
\begin{align*}
g_{ij} = e^{2u} \delta _{ij}, \quad u \in L^\infty_{loc} (U).
\end{align*} 
The set of all $ W^{2,2}$-conformal immersions of $\Sigma$ is denoted $W^{2,2}_{conf} ( \Sigma, \R^n)$. 
\end{definition}
For $W^{2,2}$ conformal immersions we have the weak Liouville equation, 
\begin{align*}
\int_{U} \langle Du, D\varphi \rangle = \int_{U} K_g e ^ {2u} \varphi \quad \forall \varphi \in C^\infty_0(U).
\end{align*}
This is shown to hold in \cite{Kuwert2010}. 

\begin{definition}[Branched conformal immersion]  \label{defn_branched}
 A map $ f \in W^{2,2} (\Sigma, \R^{n})$ is called a branched conformal immersion (with locally square integrable second fundamental form) if $ f \in W ^ {2,2}_ {conf} ( \Sigma \backslash \mc{S}, \R^n)$ for some discrete set $\mc S \subset \Sigma$ and if for each $ p \in \mc {S}$ there exists a neighbourhood $ \Omega_p $ such that in local conformal coordinates
 \begin{align*}
\int_{\Omega_p\backslash \{p\} } |A|^2 d\mu_g < \infty.   
\end{align*}
Additionally, we either require that $ \mu_{g} (\Omega_p \backslash \{p\} )< \infty $ or that $ p $ is a complete end. 

We denote the space of branched conformal immersions by $W^{2,2}_{conf,br}(\Sigma,\R^n)$.
\end{definition}  
We remark that suitable inversions send complete ends to finite area branch points and therefore we only consider finite area branch points in the rest of this paper.

Now we let $f:\Sigma \to \R^n$ be a branched conformal immersion and we let $p\in \Sigma$. We choose a punctured neighbourhood of $p$ which is conformally equivalent to $D\backslash \{0\}$ where $D=\{ z\in \R^2 |\,\ |z|<1\}$. 
Thus $f$ is a $W^{2,2}$-conformal immersion from $D\backslash \{0\}$ to $\R^n$ with $\int_{D\backslash \{0\} } |A|^2 d\mu+\mu_g(D\backslash \{0\})<\infty$ and hence a result of Kuwert-Li \cite{Kuwert2010} (see also Lemma A.4 in \cite{Rivi`ere2010}) shows the existence of a number $m\in \N_0$ such that $\theta^2 (\mu,0) =m+1$, where 
\[
 \mu:=f(\mu_g)=\Big(x\mapsto \mathcal{H}^0(f^{-1}(x))\Big) \mathcal{H}^2 \lfloor f(\Sigma)
\]
is the weight measure of the integral $2$-varifold associated with $f$ (see e.g. \cite{Kuwert2004}).

By another result of Kuwert-Li \cite{Kuwert2010} we have for all $z\in D\backslash \{0\}$
\begin{align}
 u(z)=& m\log |z| +\omega(z) \ \ \ \text{and} \label{expansionu} \\
-\Delta u =& -2\pi m \delta_0 +K_ge^{2u} \label{laplaceu},
\end{align}
where $\omega \in C^0\cap W^{1,2} \cap W^{2,1} (D)$ and $\delta_0$ denotes the Dirac delta distribution at the origin. 

Next we recall a generalised Gauss-Bonnet and inversion formula for general surfaces with branch points and ends. Special cases of this result have been obtained previously in \cite{CL:2011}, \cite{Kuwert2010}, \cite{li91} and \cite{Muller1995}.
\begin{theorem}[{\cite[Theorem 4.1, 4.2 and Corollary 4.3, 4.4]{Nguyen2011}}]\label{thm_three}
Let $ f:\Sigma\to  \R^n$ be a branched conformal immersion of a Riemann surface $\Sigma$. We denote by $ E = \{ a_{1}, \dots , a_{b}\}$, $b\in \N_0$, the complete ends with multiplicity $k(a_i)+1$, $1\le i\le b$, and for each 
$p\in \Sigma \backslash E$ we denote by 
$m(p)\in \N_0$ the number $m$ from \eqref{expansionu}. For $x_0\in \R^n$ we let $\tilde{f}=I_ {x_0} \circ f:=x_0+\frac{f-x_0}{|f-x_0|^2}:\Sigma\backslash f^{-1}(x_0) \to \R^n$ and 
we denote by $\tilde{\Sigma}=\tilde{f}(\Sigma \backslash f^{-1}(x_0))$ the image surface of $\tilde{f}$. Then we have the formulas
\begin{align}
 \int_\Sigma K d\mu =& 2\pi \Big( \chi(\Sigma)-\sum_{i=1}^b (k(a_i)+1)+\sum_{p\in \Sigma\backslash E} m(p)\Big),\label{gauss-bonnet}\\
\int_{\widetilde{\Sigma}} \widetilde{K} d\widetilde{\mu}=&\int_\Sigma K d\mu +4\pi \Big(  \sum_{i=1}^b (k(a_i)+1)-\sum_{p\in f^{-1}(x_0)} (m(p)+1) \Big), \label{inversion1}\\
 \mc W ( \tilde f ) =& \mc W (f) + 4\pi \Big(\sum_{i=1} ^b ( k(a_i)+1) -  \sum_{p \in f ^ { -1} ( x_0)} (m(p)+1) \Big)\ \ \ \text{and} \label{inversion2}\\
\int_{\widetilde{\Sigma} }|\widetilde A|^{2}d \widetilde \mu   =&  \int_{\Sigma} |A|^2 d\mu + 8\pi \Big(\sum_{i=1} ^b ( k(a_i)+1) - \sum_{p \in f ^ { -1} (x_0)} (m(p)+1) \Big).\label{inversion3} 
\end{align} 
\end{theorem}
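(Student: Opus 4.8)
The statement contains two genuinely independent pieces, the Gauss--Bonnet identity \eqref{gauss-bonnet} and one of the three inversion formulas; I would organise the argument around this. First observe that \eqref{inversion2} and \eqref{inversion3} are purely formal consequences of \eqref{inversion1}: by \eqref{gauss} one has the integrated identities $\mc W(f)=\int_\Sigma K\,d\mu+\tfrac12\int_\Sigma|\circo A|^2\,d\mu$ and $\int_\Sigma|A|^2\,d\mu=2\,\mc W(f)+\int_\Sigma|\circo A|^2\,d\mu$, so once we know that $\int_{\widetilde\Sigma}|\circo{\widetilde A}|^2\,d\widetilde\mu=\int_\Sigma|\circo A|^2\,d\mu$, subtracting the $f$- and $\widetilde f$-versions of these two identities and inserting \eqref{inversion1} produces exactly the constants $4\pi(\dots)$ and $8\pi(\dots)$. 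Thus the real content is \eqref{gauss-bonnet}, \eqref{inversion1}, and the conformal invariance of $\int|\circo A|^2\,d\mu$.

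For \eqref{gauss-bonnet} I would delete small conformal disks $B_\e(p)$ around every branch point and every end, apply the classical Gauss--Bonnet theorem to the resulting compact surface-with-boundary (bookkeeping its Euler characteristic) and let $\e\to 0$. On each deleted circle the total geodesic curvature is computed from the local conformal factor: at a branch point $u=m(p)\log|z|+\omega$ by \eqref{expansionu}, and at a complete end of multiplicity $k(a_i)+1$ one has the analogous expansion $u=-(k(a_i)+2)\log|w|+\omega$ with $\omega\in C^0\cap W^{1,2}\cap W^{2,1}$ (this expansion, together with the decay one needs, follows by inverting the end to a finite-area branch point and applying \eqref{expansionu}, or directly from the analysis of M\"uller--\v{S}ver\'ak and Kuwert--Li). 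A short computation gives boundary contributions $-2\pi(1+m(p))$ at branch points and $2\pi(k(a_i)+1)$ at ends, the $\omega$-part of each integral tending to zero; summing yields \eqref{gauss-bonnet}.

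For \eqref{inversion1} I would use that $\widetilde f=I_{x_0}\circ f$ has pull-back metric $\widetilde g=e^{2\psi}g$ with $\psi=-2\log|f-x_0|$, since $I_{x_0}$ is conformal with factor $|\cdot-x_0|^{-4}$. The two-dimensional transformation law of the Gauss curvature gives $\widetilde K\,d\widetilde\mu=(K-\Delta_g\psi)\,d\mu$, and in local conformal coordinates $\Delta_g\psi\,d\mu=\Delta_0\psi\,dx\,dy$ with $\Delta_0$ the flat Laplacian; hence $\int_{\widetilde\Sigma}\widetilde K\,d\widetilde\mu-\int_\Sigma K\,d\mu=-\int\Delta_0\psi$. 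Since $\psi$ is locally $W^{2,1}$ away from $f^{-1}(x_0)$ and from the ends, this integral localises at those points: near $p\in f^{-1}(x_0)$ one has $|f-x_0|\sim c\,|z|^{m(p)+1}$, near an end one has $|f|\to\infty$ like $|w|^{-(k(a_i)+1)}$, and reading off the logarithmic singularities of $\psi$ (equivalently, evaluating the boundary integrals over the shrinking circles $\partial B_\e$) produces the contribution $4\pi\big(\sum_i(k(a_i)+1)-\sum_{p}(m(p)+1)\big)$, i.e.\ \eqref{inversion1}. Equivalently one may simply apply \eqref{gauss-bonnet} to both $f$ and $\widetilde f$, using that under $I_{x_0}$ the points of $f^{-1}(x_0)$ become ends of $\widetilde f$ of multiplicity $m(p)+1$, the old ends become finite-area branch points of $\widetilde f$ of branch order $k(a_i)$, the remaining branch points are unchanged, and the Euler characteristic of the domain changes by $b-\#f^{-1}(x_0)$; subtracting the two formulas once more gives \eqref{inversion1}.

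Finally, the identity $\int_{\widetilde\Sigma}|\circo{\widetilde A}|^2\,d\widetilde\mu=\int_\Sigma|\circo A|^2\,d\mu$ follows from the classical pointwise conformal invariance of the density $|\circo A|^2\,d\mu$ under conformal diffeomorphisms of $\R^n$: on $\Sigma\setminus(E\cup f^{-1}(x_0))$, where both $f$ and $\widetilde f$ are genuine immersions into $\R^n$, one has $|\circo{\widetilde A}|^2\,d\widetilde\mu=|\circo A|^2\,d\mu$ pointwise, and since $E$ and $f^{-1}(x_0)$ are finite and $|A|^2\,d\mu\in L^1$ near each of them, integrating over all of $\Sigma$ loses nothing. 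Together with \eqref{inversion1} and the two Gauss-equation identities this gives \eqref{inversion2} and \eqref{inversion3}. The main obstacle, and the part that needs care rather than bookkeeping, is the analysis at the complete ends in the second and third steps: establishing the precise logarithmic expansion of $u$ (hence of $\psi$) there, verifying that the $W^{2,1}$-regular remainder carries no hidden point mass and that all boundary integrals over shrinking circles vanish, and — underlying everything — checking that $I_{x_0}\circ f$ is again a branched conformal immersion in $W^{2,2}_{conf,br}$, i.e.\ that an inverted complete end of finite total curvature really is a finite-area branch point with square-integrable second fundamental form.
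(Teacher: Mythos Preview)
The paper does not actually prove Theorem~\ref{thm_three}; it is quoted from \cite{Nguyen2011} without argument, so there is no ``paper's own proof'' to compare against. That said, your sketch is correct and is essentially the standard route to these formulas (and, as far as one can tell from the cited source, the one taken there as well): the Gauss--Bonnet identity \eqref{gauss-bonnet} via excision of small conformal disks and computation of the geodesic-curvature limits from the logarithmic expansions \eqref{expansionu} (and the analogous one at the ends), the inversion formula \eqref{inversion1} either from the conformal transformation law $\widetilde K\,d\widetilde\mu=(K-\Delta_g\psi)\,d\mu$ with $\psi=-2\log|f-x_0|$ or, equivalently, by applying \eqref{gauss-bonnet} to both $f$ and $\widetilde f$ and bookkeeping how ends and branch points swap under $I_{x_0}$, and finally \eqref{inversion2}--\eqref{inversion3} from \eqref{inversion1} together with the pointwise conformal invariance of $|\circo A|^2\,d\mu$.

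Your identification of the genuine analytic issue is also accurate: the only places where work (as opposed to bookkeeping) is needed are the existence and $C^0\cap W^{1,2}\cap W^{2,1}$-regularity of the remainder $\omega$ in the expansion of the conformal factor at ends, and the verification that $I_{x_0}\circ f$ is again in $W^{2,2}_{conf,br}$. Both of these are exactly what is supplied by \cite{Muller1995} and \cite{Kuwert2010} (the paper invokes these results for the same purpose just before stating the theorem). One small remark: in your boundary computation you should make explicit that the $\omega$-contribution to $\int_{|z|=\e}\partial_r u\,\e\,d\theta$ tends to zero; this uses $\omega\in C^0\cap W^{2,1}$ rather than merely $W^{1,2}$, since $W^{1,2}$-regularity alone does not guarantee that circular averages of $\partial_r\omega$ vanish in the limit.
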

Moreover, we will make use of the following theorem due to Osserman, where complete minimal surfaces with finite total curvature equal to $8 \pi$ are classified. 
\begin{theorem}[{\cite[Theorem 3.4]{Osserman1964}}]\label{thm_osserman}\label{thm_hoss}
Let $ \Sigma\subset \R^3$ be a complete minimal surface with finite total curvature $ \int_{\Sigma} |A|^{2} d\mu= 8\pi$. Then $\Sigma$  is either isometric to Enneper's minimal surface or a catenoid.  
\end{theorem}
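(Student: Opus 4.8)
\emph{Proof idea.} This is a classical result of Osserman; since we only use the classification we merely outline the proof, which runs through the Weierstrass representation, and refer to \cite{Osserman1964} for the details. The plan is to use the structure theory of complete minimal surfaces of finite total curvature to reduce to a finite, algebraic problem, and then to write down the two admissible Weierstrass data. (As in \cite{Osserman1964}, $\Sigma$ is taken to be oriented.)

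First I would invoke Osserman's structure theorem: a complete, oriented minimal surface in $\R^3$ with finite total curvature is conformally a compact Riemann surface $\overline{\Sigma}$ of genus $\gamma$ with finitely many punctures $q_1,\dots,q_r$ (the ends); its Gauss map, composed with stereographic projection, extends to a meromorphic map $g\colon\overline{\Sigma}\to\mbb{CP}^1$; and the associated height form $\omega$ extends to a meromorphic one-form on $\overline{\Sigma}$ that is holomorphic and nowhere vanishing on $\overline{\Sigma}\setminus\{q_1,\dots,q_r\}$ away from the poles of $g$, where it has a zero of order exactly $2k$ at each pole of order $k$ of $g$.

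Next I would compute $\deg g$. Since $f$ is minimal, the Gauss equation \eqref{gauss} gives $|A|^2=-2K$, so the hypothesis $\int_\Sigma|A|^2\,d\mu=8\pi$ reads $\int_\Sigma K\,d\mu=-4\pi$; on the other hand $\int_\Sigma(-K)\,d\mu$ is the area of $\mbb{CP}^1$ counted with multiplicity under $g$, namely $4\pi\deg g$. Hence $\deg g=1$. A degree-one holomorphic map between compact Riemann surfaces is a biholomorphism, so $\gamma=0$, $\overline{\Sigma}\cong\mbb{CP}^1$, and after a conformal reparametrisation and a rotation of $\R^3$ we may normalise $g(z)=z$. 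The number of ends is then controlled by the Jorge--Meeks relation $\deg g=\gamma+r-1$ for embedded ends, together with the facts that a non-embedded end only raises $\deg g$ and that $\deg g=1$ with $K\not\equiv0$ forbids a single embedded end: one gets $r\in\{1,2\}$.

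Finally I would split into the two cases. If $r=1$, place the end at $\infty$; since $g(z)=z$ has its only pole at $\infty$, the form $\omega$ is holomorphic and nowhere vanishing on $\C$, hence constant, and after scaling $(g,\omega)=(z,dz)$ on $\C$ --- the period conditions being vacuous on the simply connected domain and the metric $(1+|z|^2)|dz|$ complete at $\infty$ --- which are precisely the Weierstrass data of Enneper's minimal surface. If $r=2$, one checks via the period conditions that the two ends are Gauss-antipodal, so that after a further normalisation the ends sit at $0$ and $\infty$ with $g(z)=z$; then $\omega=c\,z^m\,dz$ for some $c\in\C\setminus\{0\}$ and $m\in\mbb{Z}$, completeness at both ends restricts $m$ to $\{-1,-2,-3\}$, and imposing $\mathrm{Re}\oint_{|z|=1}\phi_j=0$ for $j=1,2,3$ on the coordinate one-forms $\phi=\bigl(\tfrac12(1-g^2),\tfrac{i}{2}(1+g^2),g\bigr)\omega$ leaves only $m=-2$, $c\in\R$, i.e. the catenoid. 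This exhausts all possibilities. The main obstacle is the structure theorem itself, which does the genuine analytic work of converting a completeness hypothesis into meromorphy on a compact surface; granting it, the degree count turns $\int_\Sigma|A|^2\,d\mu=8\pi$ into $\deg g=1$, and the subsequent bookkeeping of meromorphy, completeness and periods is elementary.
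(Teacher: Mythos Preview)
The paper does not give a proof of this statement at all: Theorem~\ref{thm_osserman} is quoted directly from Osserman \cite{Osserman1964} as a black box, and is used later (via \eqref{inversion2}, \eqref{inversion3}) purely as a classification input. So there is nothing in the paper to compare your argument against.

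That said, your outline is the correct one and is essentially Osserman's original argument. The structure theorem plus $\int_\Sigma |A|^2\,d\mu=8\pi\Rightarrow \deg g=1\Rightarrow \overline{\Sigma}\cong\mbb{CP}^1$ with $g(z)=z$ is exactly right. Your end count is also correct: from $2\deg g=2\gamma-2+r+\sum_i I_i$ with $\gamma=0$, $\deg g=1$ and $I_i\ge 1$ one gets $r\le 2$, and the two cases $r=1$ (forcing $I_1=3$) and $r=2$ (forcing $I_1=I_2=1$) lead to Enneper and the catenoid respectively via the residue/period computation you describe. One small point worth tightening if you ever write this out in full: once you normalise $g(z)=z$, the pole of $g$ sits at $\infty$, and you should say explicitly why $\infty$ must be one of the punctures (otherwise $\omega$ would need a zero of order $2$ there, which is incompatible with $\omega$ being nowhere zero on the finite punctured domain and meromorphic on $\mbb{CP}^1$). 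With that in place your placement of the ends at $\infty$ (and, in the $r=2$ case, at $0$ after using the remaining $SU(2)$ freedom and the flux/period balancing) is justified.
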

In higher codimension there are complete minimal surfaces with $ \int_\Sigma |A|^ 2 d \mu = 4 \pi$. By a result due to Chen \cite{Chen1979}, these surfaces have been classified. In a paper by Hoffman-Osserman \cite{Hoffman1980}, a representation in terms of an orthogonal complex structure was shown. 
\begin{theorem} [\cite{Chen1979},\cite{Hoffman1980} Chen's Minimal Graph]\label{thm_chen}
Let $ \Sigma$ be a minimal surface in $ \R^n$, $n\ge 4$. Suppose that $\Sigma$ is complete and that we have
\begin{align*}
\int_{ \Sigma} |A| ^ 2 d \mu = 4 \pi.
\end{align*} 
Then $ \Sigma$ lies in a four dimensional subspace. Furthermore if we let $ (w,z)$ be co-ordinates in $ \mbb{C} ^ 2 $ then $\Sigma$ is the graph of the function 
\begin{align*}
w = c z ^2
\end{align*}
for some $c\in \mbb{C}$.

Surfaces corresponding to different values of $c$ are not isometric and the surface, being the graph of a function, is embedded.
\end{theorem}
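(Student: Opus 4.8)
\smallskip
\noindent\emph{Sketch of the argument.}
I would prove this classical classification by running the Weierstrass--Chern--Osserman representation for complete minimal surfaces of finite total curvature and then exploiting that the energy sits exactly at its minimal positive value. First, since $\Sigma$ is minimal we have $H\equiv 0$, so the Gauss equation \eqref{gauss} gives $|A|^2=-2K$ and the hypothesis becomes $\int_\Sigma K\,d\mu=-2\pi$. By the theorems of Osserman and Chern--Osserman, $\Sigma$ is then conformally a compact Riemann surface $\overline{\Sigma}$ of some genus $g$ with finitely many punctures $p_1,\dots,p_r$ (the complete ends); the immersion is $f=\operatorname{Re}\int\phi$ with $\phi=(\phi_1,\dots,\phi_n)$ a vector of holomorphic $1$-forms on $\Sigma$ satisfying $\sum_i\phi_i^2\equiv 0$ and $\sum_i|\phi_i|^2$ nowhere zero; the $\phi_i$ extend meromorphically to $\overline{\Sigma}$ with poles only at the $p_j$; and the total curvature equals $-2\pi\,d$, where $d=\deg\gamma_f^{*}\mathcal{O}(1)$ is the degree of the generalised Gauss map $\gamma_f=[\phi_1:\dots:\phi_n]\colon\overline{\Sigma}\to\mathbb{CP}^{n-1}$. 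Hence the hypothesis forces $d=1$.

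Next I would extract rigidity from $d=1$. Since $\Sigma$ is not a plane, $\gamma_f$ is non-constant, so the degree-one line bundle $\gamma_f^{*}\mathcal{O}(1)$ has at least two independent sections; by Riemann--Roch such a bundle exists only on $\mathbb{CP}^1$, so $g=0$ and $\overline{\Sigma}=\mathbb{CP}^1$. A degree-one holomorphic map $\mathbb{CP}^1\to\mathbb{CP}^{n-1}$ is a linear isomorphism onto a projective line, so in a suitable affine coordinate $z$ we may write $\phi_i=\rho(z)(a_i+b_iz)\,dz$ for a common meromorphic function $\rho$ on $\mathbb{CP}^1$ and constant vectors $a,b\in\C^n$ that are not proportional. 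The relation $\sum_i\phi_i^2\equiv0$ is precisely $a\cdot a=a\cdot b=b\cdot b=0$, i.e.\ $\operatorname{span}_\C\{a,b\}$ is an isotropic complex $2$-plane; since $\int\phi=aF+bG$ for scalar primitives $F,G$ of $\rho\,dz$ and $\rho z\,dz$, the surface lies in the real subspace $V=\operatorname{span}_\R\{\operatorname{Re}a,\operatorname{Im}a,\operatorname{Re}b,\operatorname{Im}b\}$, which has $\dim V\le 4$. One checks that $a\not\parallel b$ forces $\dim V\ge3$, and $\dim V=3$ is impossible because a complete minimal surface in $\R^3$ has total curvature a non-positive multiple of $4\pi$ (Osserman), never $-2\pi$; hence $\dim V=4$ and we identify $V\cong\C^2$.

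Then I would pin down $\rho$ and recognise the graph. After a complex-linear change of coordinates on $V\cong\C^2$ one may take $a=(1,i,0,0)$ and, after subtracting a multiple of $a$ from $b$ and rescaling, $b=(0,0,1,i)$, so $(\phi_1,\phi_2)=\rho\,dz\,(1,i)$, $(\phi_3,\phi_4)=\rho z\,dz\,(1,i)$ and the induced metric is $ds^2=\tfrac12\sum_i|\phi_i|^2=|\rho|^2(1+|z|^2)\,|dz|^2$. Since $\sum_i|\phi_i|^2$ is nowhere zero off the punctures, $\rho$ has no zeros there, and a zero of $\rho$ would yield a finite-distance (hence incomplete) puncture, so $\rho$ has no zeros at all: $\rho(z)=c_0\prod_j(z-p_j)^{-k_j}$ with $k_j\ge1$. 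Completeness of $ds^2$ at $\infty$ forces $\sum_j k_j\le2$, whereas single-valuedness of $f$---i.e.\ $\operatorname{Re}\oint_{p_j}\phi_i=0$, which unwinds to $\oint_{p_j}\rho\,dz=\oint_{p_j}\rho z\,dz=0$---forces the residues of both $\rho$ and $\rho z$ to vanish at every $p_j$, hence $k_j\ge3$. These are compatible only when there are no poles, i.e.\ $\rho\equiv c_0$. Integrating gives $f=\operatorname{Re}\big(c_0z,\,ic_0z,\,\tfrac12c_0z^2,\,\tfrac12ic_0z^2\big)$ up to translation, so writing $w,\widetilde w$ for the two complex coordinates of $V$ one obtains $\widetilde w=c\,w^2$ for a constant $c\neq0$ determined by $c_0$; thus $\Sigma$ is the graph of $\widetilde w=cw^2$ over the entire $w$-plane, hence embedded. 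For the non-isometry claim I would note that $K=-2|c_0|^{-2}(1+|z|^2)^{-3}$, so $\max|K|=2|c_0|^{-2}$ is attained at the unique critical point and is a fixed multiple of $|c|^2$; being an isometry invariant, it shows surfaces with distinct $|c|$ are non-isometric.

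The hard part is the middle two steps: converting the single scalar identity $\int_\Sigma|A|^2\,d\mu=4\pi$, via the Chern--Osserman degree formula, into the strong structural statement that $\gamma_f$ has degree one and is therefore \emph{linear}, and then carrying out the end/period/completeness bookkeeping to eliminate every Weierstrass datum except a constant one. The clash between completeness ($\sum_jk_j\le2$) and reality of the periods ($k_j\ge3$) is precisely where the codimension hypothesis $n\ge4$ enters: in $\R^3$ the analogous analysis is vacuous, since there the total curvature is forced to be a multiple of $4\pi$.
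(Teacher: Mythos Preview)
The paper does not prove this theorem: it is stated in Section~2 as a classical preliminary result, attributed to Chen and Hoffman--Osserman, and no argument is given. So there is nothing in the paper to compare your sketch against.

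That said, your sketch follows the standard Chern--Osserman/Weierstrass route and is essentially sound. A few small points are worth tightening. First, your completeness argument at $\infty$ tacitly assumes the affine coordinate has been chosen so that $\infty$ is one of the ends; this is legitimate since $\Sigma$ is non-compact (there are no closed minimal surfaces in $\R^n$), but it should be said. Second, the implication ``residues of $\rho$ and $\rho z$ vanish $\Rightarrow k_j\ge3$'' is literally correct only when $p_j$ is the unique finite pole; with several poles the residue computation couples them. However, in the regime that actually occurs here ($\sum_j k_j\le 2$) the remaining cases are immediate: a simple pole of a zero-free rational function has nonzero residue, and a single double pole gives $\mathrm{Res}_{p}(\rho z\,dz)=c_0\ne0$, so no finite poles survive and $\rho$ is constant as you claim. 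Third, your isometry invariant $\max|K|=2|c_0|^{-2}$ distinguishes only different values of $|c|$; surfaces with the same $|c|$ but different $\arg c$ are congruent via a rotation in the second $\C$-factor, so the non-isometry assertion as literally stated needs either that caveat or the convention that such $c$ are identified.
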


\section{Convergence results}

In this section we derive various weak and strong convergence results for sequences of branched conformal immersions.

\begin{definition}\label{confsp2}
Let $m\in \N_0$. A map $f \in W^{2,2}(D,\R^n)$ is called a $m$-branched conformal immersion, if on $ D \backslash\{0\} $ $ f$ is a conformal immersion and if there exist conformal coordinates about $0$ such that the induced metric is given by
\[
 g_{ij}=e^{2u} \delta_{ij},\ \ \ \text{where} \ \ \ u-m\log |\cdot| \in L^\infty\cap W^{1,2}( D).
\]
We denote the set of all $W^{2,2}$ $m$-branched conformal immersions by $W^{2,2}_{\text{conf,m}}(D,\R^n)$.   
\end{definition}
Note that $0$-branched conformal immersions are in $W^{2,2}_{conf}(D,\R^n)$ and that for every branched conformal immersion $f:\Sigma\to \R^n$ and every point $p\in \Sigma$, there exists a number $m\in \N_0$ such that in conformal coordinates in a small neighbourhood around $p$ the immersion is a $m$-branched conformal immersion by the discussions in section $2$.

In the case $m=0$ the following theorem is due to H{\'e}lein \cite{H'elein2002} with $\gamma_n = \frac{ 8 \pi}{3}$ and with the optimal constant it is due to Kuwert and Li \cite{Kuwert2010}.
\begin{theorem}\label{conv}
Let $f_k \in W^{2,2}_{ conf,m}( D , \R^n) $ be a sequence of m-branched conformal immersions with induced metrics $(g_k)_{ij} = e ^{2u_k} \delta_{ij} $ and assume that 
 \begin{align*}
\int_{ D} | A _{k}| ^2 d \mu_{g_k} \leq \gamma < \gamma_n =\left \{
 \begin{array}{ll}
 8\pi \quad \text { if $n = 3$},\\
 4 \pi \quad \text{ if $n \ge 4$}. 
\end{array}
 \right. 
\end{align*}
Assume also that we have uniformly bounded area $ \mu_{g_k} ( D)\leq C $ and $ f_k(0)=0$. Then $f_k$ is bounded in $ W^{2,2}_{loc}(D, \R^n)$ and there is a subsequence such that one of the following two cases occurs:
\begin{enumerate}
\item $f_k$ converges weakly to a $m$-branched conformal immersion, that is $ u_k-m\log |\cdot|$ is bounded in $ L^{\infty}_{loc} ( D)$ and $ f_k$ converges weakly in $W^{2,2}_{loc} ( D, \R^n)$ to a $m$-branched conformal immersion $f \in W^{2,2}_{loc} (D ,\R^{n})$, or 
\item $f_k$ converges to a constant map, that is $u_k(z) - m \log |z|\rightarrow -\infty$, $\forall z \neq 0$ and $ f_k \rightarrow 0$ locally uniformly on $D$.
\end{enumerate} 
\end{theorem}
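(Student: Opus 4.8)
The plan is to adapt the argument of Kuwert--Li \cite{Kuwert2010} for the unbranched case $m=0$ (itself building on H\'elein \cite{H'elein2002}), the new point being the passage across the branch point at $0$. I would start by writing $u_k = m\log|\cdot| + \omega_k$ with $\omega_k\in L^\infty\cap W^{1,2}(D)$, so that $\omega_k$ solves the Liouville equation $-\Delta\omega_k = K_{g_k}e^{2u_k}$ on all of $D$ (the Dirac term $-2\pi m\delta_0$ of \eqref{laplaceu} having been absorbed into $-\Delta(m\log|\cdot|)$). The decisive first step is a uniform estimate for the conformal factor: since $\int_D|A_k|^2\,d\mu_{g_k}\le\gamma<\gamma_n$, one applies the moving frame / Coulomb gauge construction of H\'elein and Kuwert--Li to a frame that has been ``unwound'' near the origin so that it has finite Dirichlet energy there, and obtains
\[
\|\omega_k-\bar\omega_k\|_{L^\infty(D_{1/2})}+\|\nabla\omega_k\|_{L^2(D_{1/2})}\le C,\qquad \bar\omega_k:=\dashint_{\partial D_{1/2}}\omega_k .
\]
The area bound, together with $\omega_k\ge\bar\omega_k-C$ on $D_{1/2}$, gives $\mu_{g_k}(D_{1/2})\ge c\,e^{2\bar\omega_k}$ and hence $\bar\omega_k\le C$; in particular $e^{2u_k}\le C$ on $D_{1/2}$, so from $\Delta f_k=e^{2u_k}H_k$ and $|H_k|^2\le 2|A_k|^2$,
\[
\|\Delta f_k\|_{L^2(D_{1/2})}^2=\int_{D_{1/2}}e^{4u_k}|H_k|^2\,dx\le\Big(\sup_{D_{1/2}}e^{2u_k}\Big)\int_{D_{1/2}}|H_k|^2\,d\mu_{g_k}\le C .
\]
Combined with $|Df_k|^2=2e^{2u_k}\le C$ and $f_k(0)=0$, elliptic estimates then show that $f_k$ is bounded in $W^{2,2}_{loc}(D,\R^n)$, which is the first assertion.

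After extracting a subsequence I would split into the two cases according to the behaviour of $\bar\omega_k$. If $\bar\omega_k\to-\infty$, then $\omega_k=u_k-m\log|\cdot|\to-\infty$ locally uniformly on $D$ and $|Df_k|^2=2|z|^{2m}e^{2\omega_k}\to 0$ pointwise, dominated by the integrable function $C|z|^{2m}$; hence $\mu_{g_k}(D_r)\to 0$ for every $r<1$, and integrating $|Df_k|$ along radial segments from $0$ (using $f_k(0)=0$) gives $f_k\to 0$ locally uniformly, which is alternative (2). If instead $\bar\omega_k$ stays bounded, then $\omega_k$ is bounded in $L^\infty\cap W^{1,2}(D_{1/2})$, so a subsequence converges weakly in $W^{1,2}_{loc}$ and, by Rellich, strongly in every $L^p_{loc}$ and a.e.\ to some $\omega\in L^\infty\cap W^{1,2}(D_{1/2})$. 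Meanwhile $f_k$, bounded in $W^{2,2}_{loc}$, converges along a further subsequence weakly in $W^{2,2}_{loc}$ and strongly in $W^{1,p}_{loc}$ to a limit $f$, so $|Df_k|^2\to|Df|^2$ in $L^1_{loc}$; matching this with $|Df_k|^2=2|z|^{2m}e^{2\omega_k}\to 2|z|^{2m}e^{2\omega}$ yields $|Df|^2=2|z|^{2m}e^{2\omega}$ a.e. Hence $f$ is a conformal immersion on $D\setminus\{0\}$ with conformal factor $m\log|\cdot|+\omega$, $\omega\in L^\infty\cap W^{1,2}$, i.e.\ an $m$-branched conformal immersion, and $u_k-m\log|\cdot|$ is bounded in $L^\infty_{loc}$; this is alternative (1).

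The main obstacle will be the conformal factor estimate of the first step at the branch point. Away from $0$ this is exactly the H\'elein--Kuwert--Li moving frame estimate, but near $0$ the naive tangent frame $e^{-u_k}\partial_i f_k$ winds $m$ times and has infinite Dirichlet energy, so one must replace it by an unwound frame and verify that the Coulomb gauge and the accompanying elliptic estimates for $\omega_k$ are uniform up to the branch point; this is precisely where the sharp threshold $\gamma_n$ and the $L^2$-integrability of $A_k$ across the branch point enter. Once this estimate is available the remaining steps are routine: the dichotomy is governed entirely by whether $\bar\omega_k$ diverges to $-\infty$, and the identification of the weak limit uses only Rellich compactness for $\omega_k$ and $f_k$ together with the elementary relation $|Df_k|^2=2e^{2u_k}$.
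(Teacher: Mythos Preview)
Your overall architecture is correct and matches the paper's: control the conformal factor, derive $W^{2,2}_{loc}$ bounds from $\Delta f_k=e^{2u_k}H_k$, and then split according to whether the normalised conformal factor diverges to $-\infty$. The difference lies entirely in how the key conformal factor estimate is obtained.

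You propose to run the H\'elein--Kuwert--Li moving frame / Coulomb gauge argument directly on $\omega_k=u_k-m\log|\cdot|$, anticipating as the main difficulty that the tangent frame $e^{-u_k}\partial_i f_k$ winds $m$ times around the origin and therefore must be ``unwound''. The paper sidesteps this entirely. It simply invokes M\"uller--\v{S}ver\'ak to produce a solution $v_k:\C\to\R$ of $-\Delta v_k=K_{g_k}e^{2u_k}$ on $D$ with $\|v_k\|_{L^\infty}+\|Dv_k\|_{L^2}\le C(\gamma)\int_D|A_k|^2\,d\mu_{g_k}$; the right-hand side is in $L^1(D)$ across the branch point and its Jacobian structure (via the Gauss map for $n=3$, via the second fundamental form in general) survives because $|\nabla n|^2=|A|^2e^{2u}\in L^1$ and the Gauss map itself does not wind. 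Then $h_k:=u_k-v_k-m\log|\cdot|$ is harmonic on all of $D$, and the branch point has been absorbed into the explicitly known $m\log|\cdot|$. From here the paper uses only the mean value property: the area bound gives $\int_D u_k^+\le C$, hence $h_k\le C(\gamma,r)$ on $D_{1-r}$; the dichotomy is phrased in terms of whether $\int_D u_k^-$ stays bounded (giving a lower bound on $h_k$ by the mean value property again) or diverges (giving $h_k(0)\to-\infty$, which is upgraded to $\sup h_k\to-\infty$ locally by Harnack applied to the nonnegative harmonic function $C(\gamma,r)-h_k$).

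So the ``unwinding'' step you flag as the main obstacle is not needed: the paper's decomposition $u_k=m\log|\cdot|+v_k+h_k$ with $v_k$ bounded and $h_k$ harmonic is precisely designed so that the branch singularity is carried by the explicit term and all estimates reduce to classical potential theory for harmonic functions. Your route via an oscillation estimate on $\omega_k$ and the dichotomy on $\bar\omega_k$ would also work, but it front-loads the analytic difficulty into the gauge construction at $0$, whereas the paper's approach makes the branch point essentially invisible once $m\log|\cdot|$ is subtracted.
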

 \begin{proof} 
We will follow the proof of Theorem 4.1 in \cite{Kuwert2010}. By \cite{Muller1995} we have that for every $k\in \N$ there exists a solution $v_k:\C\to \R$ of the equation 
 \begin{align*}
 - \triangle v _k = K _{ g _k } e ^{2u_k} \ \ \ \text{in} \ \ D
\end{align*} 
such that 
\begin{align*}
\| v_k\|_{L^\infty(\C) } + \| D v _k \|_{ L ^ 2 (\C)} \leq C( \gamma) \int _{ D } | A_{ f _k}| ^ 2 d \mu _{g_k}. 
\end{align*}
Using the area bound we get
\begin{align*}
 \int _{ D } e ^ { 2 u _k ^+} = | \{ u_k \leq 0 \}| + \int _{ \{u _k > 0 \}  }  e ^ { 2 u _k } \leq C.
\end{align*}  
This gives us the estimate 
\begin{align*}
 \int _{ D} u_k ^+ \leq C.
\end{align*}
Next note that the function $h_k = u _k - v _k - m \log |\cdot| $ is harmonic on $ D$. For $\dist (z, \partial D) \geq r, r \in (0,1) $, we have that 
\begin{align*}
 h _k (z) &= \frac 1 { \pi r ^ 2 } \int _{ D _ r (z)} u_k - v _k - m \log |\cdot| \\
 &\leq \frac 1 { \pi r ^ 2 } \int _{D} ( u_k^+ -m\log |\cdot|) +\| v_k\|_{ L ^ \infty (D)}  \\
&\leq   C(\gamma, r).
\end{align*}
Hence $ u_k - m \log |\cdot| $ is locally uniformly bounded from above, which implies that $e^{2u_k}= |\cdot|^{2m} e ^ {2 h_k +2 v_k }$ is also locally uniformly bounded from above. 
Furthermore, as $ |\nabla f _ k (z)| ^ 2 = 2 e ^ {2u_k(z) } = 2 |z| ^{2m} e ^ {2h_k(z) + 2 v _k(z) } $, we have that $ f _ k $ is bounded in $ W_{loc}^{1,\infty} ( D, \R ^ n)$. The $W^{2,2}$ $m$-branched conformal immersion $f_k$ satisfies 
\begin{align*}
\Delta f _k =e ^{2u_k } H _{ f_k}.  
\end{align*} 
Note that the equation is a priori only satisfied away from the branch point $0$. Over the branch point we can break the equation into components. Therefore any missing contribution must be due to Dirac delta distributions or their derivatives. But $f_k \in W^{2,2}_{loc}(D,\R^n)$ so this can not happen.

Hence we have that on $ \Omega = D_{ 1-r }(0)$,
\begin{align*}
\int_\Omega |\Delta f _k | ^2 &= \int _{ \Omega } e ^ {2 u_k} |H_{ f_k} | ^ 2 d \mu _{ g _k} \leq C( \gamma, r ) \int _{ \Omega} |A_{ f _k } | ^ 2  d \mu _{ g _k}. 
\end{align*}
Therefore, by $L^2$-theory, we see that $ f_k$ is locally uniformly bounded in $W ^{2,2}_{loc}$ and converges weakly in $W^{2,2}_{loc}(D,\R^n)$ to some $ f \in W^{2,2}_{ loc} \cap W_{loc}^{1,\infty} (D, \R^n)$.
\newline
\textbf{Case 1)} $\int _{D} u_k ^ -  \leq C$ 

Then for $ \dist ( z, \partial D) \geq r$ we have that 
\begin{align*}
 h_k(z)& = \frac { 1 }{ \pi r ^ 2 } \int _{ D _ r (z)} u _k - v _k - m \log |\cdot|\\
& \geq - \frac {1}{ \pi r ^ 2 } \int _{ D _ r (z)} u_k^{-} - \| v_k\|_{L^\infty (D)} \\
&\geq - C(\gamma, r ).
\end{align*} 
This shows that $ h_k $ is locally bounded from above and below. Hence $u_k - m \log |\cdot|= h_k + v _k$ is bounded in $ L_{loc}^ \infty \cap W^{ 1,2}_{loc}( D) $. Therefore $ u_k - m\log |\cdot| $ converges pointwise to a function $v \in L ^ \infty _{loc}(D)$ which satisfies 
\begin{align*}
g_{ij} = \langle \partial _i f , \partial_j f \rangle = |z|^{2m}e ^{2v} \delta _{ij}=e^{2u}\delta_{ij},
\end{align*}
where $u:=v+m \log |\cdot|$ and this implies that $f$ is again a $m$-branched conformal immersion.
\newline 
\textbf{Case 2)} $ \int_{D} u_k^- \rightarrow -\infty $ 

In this case we have that 
\begin{align*}
 h_k(0) = \frac 1\pi \int _{ D} u_k - v _k -m \log |\cdot| \rightarrow -\infty.
\end{align*}
Hence, by the Harnack inequality together with the fact that $ C(\gamma, r) - h_k \geq 0 $ in $\Omega $, we have that
\begin{align*}
\sup_{ \Omega'} h _k \leq \frac{1}{ C(\gamma)} \int _{ \Omega' } h _k + C( \gamma,r ) \rightarrow -\infty   
\end{align*}
where $ \Omega' = D_{ 1- 2r }( 0) $. 
This shows that $ u_k - m \log |\cdot| = v _k + h_k  \rightarrow -\infty $ as $ k \rightarrow \infty $ away from the origin. As $ f_k (0)=0$ we see that $ f_k \rightarrow 0 $ locally uniformly.
 \end{proof}

We will also need a global compactness result for sequences of branched conformal immersions. The following result is a minor modification of Proposition $4.1$ in \cite{Kuwert2010}.

\begin{proposition} \label{prop_globalconv}
Let $ \Sigma$ be a closed Riemann surface and let $ f _k \in W ^{2,2} _{ conf, br}( \Sigma, \R ^ n ) $ be a sequence of branched conformal immersions with singular sets $\mc S_k$ satisfying,
\begin{align*}
|\mc S_k|+\int _{ \Sigma} |A_k|^ 2 d \mu _ k \leq \Lambda < \infty. 
\end{align*} 
Then for a subsequence there exists a sequence of M\"obius transforms $ \sigma_k:\R^n \to \R^n $ and an at most finite set $\mc {S}$ such that 
\begin{align*}
\sigma_k \circ f _k \rightharpoonup f \quad  \text{weakly in $ W ^{2,2}_{ loc} ( \Sigma \backslash \mc {S}, \R ^ n )$} ,
\end{align*}
where $ f: \Sigma \rightarrow \R^n$ is a branched conformal immersion with bounded total curvature.  
\end{proposition}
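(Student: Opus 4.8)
The plan is to adapt the proof of \cite[Proposition~4.1]{Kuwert2010}, which covers the unbranched case $\mc S_k=\emptyset$; the only genuinely new feature is the presence of the branch point sets, and the point is that they contribute only harmless extra points to the singular set. \textbf{Step 1 (the singular set).} Since $|\mc S_k|\le\Lambda$ is a uniformly bounded number of points on the compact surface $\Sigma$, I would first pass to a subsequence so that $|\mc S_k|\equiv N$ and, writing $\mc S_k=\{p_k^1,\dots,p_k^N\}$, each $p_k^j$ converges to some $q^j\in\Sigma$; set $\mc S_0=\{q^1,\dots,q^N\}$. After a further subsequence the Radon measures $|A_k|^2\,d\mu_k$ converge weakly-$\ast$ on $\Sigma$ to a finite measure $\nu$, and I set $\mc S_1=\{x\in\Sigma:\nu(\{x\})\ge\gamma_n\}$, which has at most $\Lambda/\gamma_n$ elements by disjointness of small balls. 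Let $\mc S=\mc S_0\cup\mc S_1$. Then every compact $K\Subset\Sigma\setminus\mc S$ admits $r_K>0$ and $k_0$ with $\mc S_k\cap K=\emptyset$ and $\int_{B_{r_K}(x)}|A_k|^2\,d\mu_k<\gamma_n$ for all $x\in K$ and $k\ge k_0$, so near $K$ the $f_k$ are unbranched conformal immersions with locally sub-threshold total curvature, which is exactly the hypothesis of Theorem~\ref{conv} with $m=0$ (the area bound needed there being supplied by Step~2).

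\textbf{Step 2 (M\"obius normalisation).} Since $\mc W(f_k)\le\tfrac12\int_\Sigma|A_k|^2\le\Lambda/2$ and $f_k(\Sigma)$ is compact, the Li--Yau inequality and the monotonicity formula bound the density and compare area with squared diameter, with constants depending only on $\Lambda$. Following \cite{Kuwert2010}, I would choose M\"obius transformations $\sigma_k$ (a translation and a dilation, and if necessary an inversion) so that the normalised immersions, still written $f_k$, satisfy $\diam f_k(\Sigma)=1$, $0\in f_k(\Sigma)$ and $c(\Lambda)\le\mu_{f_k}(\Sigma)\le C(\Lambda)$, so that the conformal factor relative to a fixed background metric on $\Sigma$ stays locally bounded away from $\mc S$, and so that $\Sigma\setminus\mc S$ carries a non-degenerate piece of the limiting surface. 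By \eqref{inversion3} together with the density bound, an inversion changes $\int|A_k|^2$ by at most a bounded amount, so all the hypotheses persist.

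\textbf{Step 3 (patching the limits).} Covering $\Sigma\setminus\mc S$ by countably many conformal discs, applying Theorem~\ref{conv} with $m=0$ on each, and extracting a diagonal subsequence, on every such disc either $f_k$ converges weakly in $W^{2,2}$ to a conformal immersion with locally bounded conformal factor, or $u_k\to-\infty$ and $f_k\to$ const locally. The set where the second alternative holds is open and, by the Harnack argument in the proof of Theorem~\ref{conv}, closed in $\Sigma\setminus\mc S$; since $\Sigma\setminus\mc S$ is connected and Step~2 furnishes a non-collapsing point in it, this set is empty. The local weak limits then agree on overlaps and define $f\in W^{2,2}_{conf,loc}(\Sigma\setminus\mc S,\R^n)$ with $f_k\rightharpoonup f$ in $W^{2,2}_{loc}(\Sigma\setminus\mc S,\R^n)$. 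Lower semicontinuity gives $\int_{\Sigma\setminus\mc S}|A_f|^2\,d\mu_f\le\Lambda$ and $\mu_f(\Sigma\setminus\mc S)\le C(\Lambda)$, so $f$ has finite total curvature and finite area near each puncture, and the removable-singularity result recalled above (see \eqref{expansionu} and \cite{Kuwert2010}) extends $f$ across $\mc S$ to a branched conformal immersion of $\Sigma$ with bounded total curvature.

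I expect the main obstacle to be Step~2: one must arrange the M\"obius normalisation so that the weak limit is a genuine, non-constant branched conformal immersion rather than a point, ruling out the degeneration in which the whole surface collapses through a thin neck and all of its content escapes to a single point of $\mc S$. This is precisely the difficulty resolved in \cite[Proposition~4.1]{Kuwert2010}; in the present generality the only modification is the bookkeeping of Step~1, since the uniformly bounded branch point sets $\mc S_k$ merely add their finite accumulation set $\mc S_0$ to the singular set and disappear from every compact subset of $\Sigma\setminus\mc S$ for $k$ large.
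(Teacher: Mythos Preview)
Your proposal is correct and follows essentially the same route as the paper: both reduce to \cite[Proposition~4.1]{Kuwert2010}, adding to the singular set the finite accumulation set of the branch points $\mc S_k$ and invoking Theorem~\ref{conv} on compact subsets of $\Sigma\setminus\mc S$ where the $f_k$ are eventually unbranched. The paper's own proof is in fact even terser than yours---it singles out the $W^{1,q}$ bound ($1\le q<2$) on the conformal factors $u_k$ after dilation as the key estimate inherited from Kuwert--Li (which you subsume into your Step~2), and otherwise simply declares the argument identical to \cite[Proposition~4.1]{Kuwert2010} modulo the bookkeeping of $\mc S_0$ and the use of Theorem~\ref{conv}.
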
 
\begin{proof} 
Since $f_k \in W^{2,2}(\Sigma, \R^n)$ for every $k\in \N$ we can argue as in the proof of Proposition $4.1$ in \cite{Kuwert2010} and conclude that, after performing a suitable dilation, we get the estimate
\[
||u_k||_{W^{1,q}(\Sigma)} \le C
\]
for every $1\le q<2$.

Moreover, away from the possible energy concentration points and the limit of the sets of branch points $\mc S_k$, we also get uniform $W^{2,2}$ bounds for $f_k$.

The rest of the argument is then identical (up to the fact that one also has to use Theorem \ref{conv}) to the proof of Proposition $4.1$ in \cite{Kuwert2010}.   
\end{proof}
\begin{remark} 
It follows from the proof of this Proposition that the M\"obius transformations can either be chosen to be a composition of translations and dilations $\tau_k$ or one has to use an additional inversion $I_{x_0}$ at the boundary of the ball $B_1(x_0)$ for which we have that
\[
B_1(x_0) \cap \tau_k (f_k(\Sigma)) =\emptyset \ \ \ \forall k\in \N.
\] 
\end{remark}
Next we show how the previous two results can be combined in order to conclude that branch points are preserved under the above convergence results. 
\begin{theorem}\label{branchpreserv}
Let $f_k:\mbb S^2 \to \R^n$ be a sequence of branched conformal immersions with exactly one branch point $p_k\in \mbb S^2$ of branch order $m\in \N$ and with uniformly bounded $\int_{\mbb S^2} |A_k|^2 d\mu_{g_k}$. Then there exists a sequence of M\"obius transformations $\sigma_k:\R^n \to \R^n$, a sequence of reparameterisations $\phi_k:\mbb S^2 \to \mbb S^2$ and a branched conformal immersion $f:\mbb S^2 \to \R^n$ with at least one branch point of branch order $m$ and with bounded total curvature, so that 
\begin{align*}
\sigma_k \circ f _k \circ \phi_k \rightharpoonup f \quad  \text{weakly in $ W ^{2,2}_{ loc} ( \mbb S^2 \backslash \mc {S}, \R ^ n )$} ,
\end{align*}
where $p\in \mc{S}$ is the singular set.
\end{theorem}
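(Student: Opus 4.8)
The plan is to combine Proposition \ref{prop_globalconv} with the local compactness result Theorem \ref{conv}, the genuine work being a blow-up analysis at the branch point. Since $\int_{\mbb S^2}|A_k|^2\,d\mu_{g_k}$ is invariant under reparameterisation, I first compose with conformal diffeomorphisms $\phi_k^{(1)}$ of $\mbb S^2$ taking $p_k$ to a fixed point $p$, which I equip with a conformal coordinate $z$ satisfying $z(p)=0$. Applying Proposition \ref{prop_globalconv} to $f_k\circ\phi_k^{(1)}$ yields, along a subsequence, M\"obius transformations $\sigma_k:\R^n\to\R^n$, an at most finite set $\mc S$, and a branched conformal immersion $f:\mbb S^2\to\R^n$ with bounded total curvature such that $\sigma_k\circ f_k\circ\phi_k^{(1)}\rightharpoonup f$ weakly in $W^{2,2}_{loc}(\mbb S^2\backslash\mc S,\R^n)$. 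Because the branch point sets of $f_k\circ\phi_k^{(1)}$ all equal $\{p\}$, the point $p$ lies in $\mc S$ by the way $\mc S$ is produced in the proof of Proposition \ref{prop_globalconv}. It remains to show that, after possibly modifying $\phi_k^{(1)}$ and $\sigma_k$, the limit $f$ carries a branch point of branch order $m$ at $p$.

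Pass to conformal coordinates on a punctured disc $D\backslash\{0\}$ about $p$, in which $g_k:=\sigma_k\circ f_k\circ\phi_k^{(1)}$ is an $m$-branched conformal immersion: the branch order at $p$ is untouched by post-composition with $\sigma_k$, since by the Remark after Proposition \ref{prop_globalconv} the pole of $\sigma_k$ lies off $f_k(\mbb S^2)$, so $\sigma_k$ is a local diffeomorphism near $f_k(\phi_k^{(1)}(p))$ and the pulled-back metric still degenerates like $|z|^{2m}$, cf. \eqref{expansionu}. Let $\nu$ be the weak-$*$ limit of $|A_k|^2\,d\mu_{g_k}$ along the subsequence. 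If $\nu(\{p\})<\gamma_n$, choose $\rho>0$ with $\int_{D_\rho}|A_k|^2\,d\mu_{g_k}<\gamma_n$ for all large $k$; since $g_k\to f$ on $D_\rho\backslash\{0\}\subset\mbb S^2\backslash\mc S$ and $\|\Delta g_k\|_{L^2(D_\rho)}$ is bounded, $g_k(p)$ stays bounded and $g_k$ is bounded on $D_{\rho/2}$, whence Theorem \ref{conv} applies to $g_k-g_k(p)$ on $D_{\rho/2}$ (rescaled to the unit disc, its area hypothesis coming from the normalisation in Proposition \ref{prop_globalconv}). Alternative (2) of Theorem \ref{conv} would force $f$ to be constant on $D_{\rho/2}\backslash\{0\}$, impossible for a branched conformal immersion; hence alternative (1) holds, the weak limit there is an $m$-branched conformal immersion, and by uniqueness of weak limits it coincides with the extension of $f$ across $p$. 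Thus $f$ has branch order $m$ at $p$, and taking $\phi_k=\phi_k^{(1)}$ completes this case.

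The case $\nu(\{p\})\geq\gamma_n$, in which curvature concentrates at the branch point so that Theorem \ref{conv} is not directly applicable near $p$, is the main obstacle. The idea is to blow up at $p$: compose with further conformal dilations $\psi_k$ of $\mbb S^2$ fixing $p$ (so the branch point stays at $p$, still of branch order $m$) at a scale past all the concentration at $p$, so that the curvature inside a fixed disc about $p$ drops below $\gamma_n$ while the rescaled immersions do not collapse there; such a scale exists because the total curvature is finite, and on that disc the rescaled maps then look like the branch cone $w\mapsto c\,w^{m+1}$. Setting $\phi_k:=\phi_k^{(1)}\circ\psi_k$, adjusting $\sigma_k$ by the target dilations that accompany the blow-up, and re-running Proposition \ref{prop_globalconv} and then the argument of the previous paragraph for the rescaled sequence, I obtain a finite set $\mc S\ni p$ and a branched conformal immersion $f:\mbb S^2\to\R^n$ with bounded total curvature and a branch point of branch order $m$ at $p$ satisfying $\sigma_k\circ f_k\circ\phi_k\rightharpoonup f$ weakly in $W^{2,2}_{loc}(\mbb S^2\backslash\mc S,\R^n)$; should the blow-up create a complete end of $f$, a further fixed inversion removes it, keeping the total curvature finite by Theorem \ref{thm_three} and leaving the branch point at $p$ unchanged. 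The delicate point is precisely the choice of blow-up scale: far enough to push the curvature near $p$ below $\gamma_n$, but not so far that the branch structure degenerates into alternative (2) of Theorem \ref{conv}; once the scale is fixed, the conclusion follows from the two convergence results already established.
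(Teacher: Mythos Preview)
Your overall strategy matches the paper's: apply Proposition~\ref{prop_globalconv}, then split according to whether curvature concentrates at the branch point, using Theorem~\ref{conv} directly when it does not and a blow-up when it does. Your non-concentration case is fine and is essentially the paper's Case~1 (with the cosmetic change that you first reparameterise so that $p_k\equiv p$).

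The gap is in the concentration case. You correctly identify the crux --- the blow-up scale must be large enough to push the curvature near $p$ below $\gamma_n$, yet not so large that the rescaled maps collapse into alternative~(2) of Theorem~\ref{conv} --- but you do not actually establish that such a scale exists. The sentence ``such a scale exists because the total curvature is finite'' addresses only the first requirement; finiteness of total curvature says nothing about non-collapse, and your own final paragraph concedes that this is exactly the point left open. Moreover, proposing to ``re-run Proposition~\ref{prop_globalconv}'' after the blow-up is circular: that proposition selects its own M\"obius transformations, which may undo the target dilation you have just inserted.

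The paper resolves this with an explicit construction rather than an abstract existence claim. One takes
\[
r_k=\inf\Big\{r\in(0,\tfrac12]:\ \sup_{z\in D_{1/2}}\int_{D_r(z)}|\tilde A_k|^2\,d\mu_{\tilde g_k}=\gamma_n-\tau\Big\},
\]
rescales the domain by $r_k$, and then fixes the target dilation $\lambda_k$ by the normalisation $\int_{D_1}(\bar u_k(z)-m\log|z|)\,dz=0$. This normalisation, combined with the $L^\infty$ bound on the potential $v_k$ and the harmonicity of $u_k-v_k-m\log|\cdot|$, forces $\|\bar u_k-m\log|\cdot|\|_{L^\infty(D_1)}\le C$, which is precisely what rules out collapse. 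One then checks $\sup_k\lambda_k<\infty$ via Jensen and the original area bound, and finally invokes a completeness argument (Theorem~10 of \cite{Nguyen2011b}) to close the limit $\bar f:\C\to\R^n$ up over $\mbb S^2$ after an inversion. These are the missing ingredients you need to supply.
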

\begin{proof}
It follows from Proposition \ref{prop_globalconv} that there exists a sequence $\tilde{f}_k:=\sigma_k \circ f_k$ which converges in $W_{loc}^{2,2}(\mbb S^2 \backslash \mc{S},\R^n)$ to a possibly branched conformal immersion $f:\mbb S^2 \to \R^n$. Without loss of generality we can assume that $\tilde{\mu}_k(\mbb S^2)\le C$.
Now we have to consider two cases:
\begin{itemize}
\item[1)] $p:= \lim_{k\to \infty} p_k \notin \mc{S}$ 

In this case it follows from Theorem \ref{conv} that $p$ is a branch point of $f$ with branch order $m$.
\item[2)] $p\in \mc{S}$

In this case we let $0<\tau <\pi$ and we choose local conformal coordinates for the $\tilde{f}_k$ around $p\in \mbb S^2$.  Without loss of generality we assume that these conformal coordinates contain the unit disc $D$ and the point $p$ corresponds to $0\in D$. Moreover we assume that $\tilde{f}_k(0)=0$. 
Next we choose $ r_ k$ so that all discs $ D _ {r_k}(z) $ have a minimal amount of energy, that is 
\begin{align*}
r _ k = \inf \left \{ r\in \left(0, \frac 12 \right]  \bigg| \sup_{z\in D_{\frac12}} \,\ \int _{ D _{ r } (z)} |\tilde{A}_{k}| ^ 2 d \mu _{\tilde{g}_k}= \gamma _n -\tau \right\}.
\end{align*} 
By the assumption $p\in \mc{S}$,  we have that $ r _k \rightarrow 0$ as $ k \rightarrow \infty $. Then let us consider the rescaling 
\begin{align*}
\ov f _k (z)=  \frac { \tilde{f} _k( r _k z )  }{ \lambda _k}
\end{align*}
so that $ | \nabla \ov f _k | ^ 2 = 2 e ^ { \ov u _k} $ and we choose $ \lambda _k$ so that 
\begin{align*}
\int _{ D_ 1 } (\ov u _k(z)-m\log|z|)\,\ dz = 0.
\end{align*} 
As $\tilde{u}_k-m\log|\cdot|$ and $ \ov u _ k-m\log|\cdot|$ differ only by a constant, and $ \int_D |\tilde{A}_{ k} | ^ 2 d \mu _{\tilde{g}_k} $ is scale and translation invariant this implies that 
\begin{align*}
\| \ov u_k -m\log |\cdot| \|_{L ^ \infty ( D_1 ) } +\| \nabla (\ov u _k-m\log |\cdot|)\|_{ L ^ 2 ( D _ 1 )} \leq C .  
\end{align*}
This shows that $\ov f _k $ converges either locally weakly in $ W ^{2,2} ( \mbb C \backslash \{0\}, \R ^ n )$ to a $m$-branched conformal immersion $\ov f:\C\to \R^n$ or $\ov f_k$ collapses to a point. Note that we can not have energy concentration points for the sequence $\ov f_k$ by our blow-up construction.

It is easy to check that 
\begin{align*}
\int _{ D _1  } | |r_kz|^{-2m}\nabla f _ k ( r  _k z)| ^ 2 = \int_{ D _1 }  e ^{ 2 ( u _k (r _k z ) -m\log|r_k z|) + 2 \log r _k }\leq C. 
\end{align*}
As $ \ov u _k(z) = u _k   (r _k z )  + \log r _k - \log \lambda_k $ we see that by Jensen's inequality 
\begin{align*}
\int _{ D _ 1 } 2 ( u _ k (r _ k z ) -m\log|r_k z|+\log r_ k) \leq C,  
\end{align*} 
then we get that $ \sup \lambda_k < \infty$. In particular this shows that the sequence $\ov f_k$ can not collapse to a point. Arguing as in the proof of Theorem $10$ in \cite{Nguyen2011b} we see that $\ov f$ is complete and hence, after another inversion, it can be completed as a branched conformal immersion from $\mbb S^2$ into $\R^n$ with at least one branch point of branch order $m$.
\end{itemize}
\end{proof}

In the next proposition we show that if there is no loss of $\int_D |H|^2 d\mu$ then the weak convergence in Theorem \ref{conv} becomes strong convergence. Note that similar results were also proved in Proposition 5.3 of \cite{Kuwert2013} and in a remark after Proposition 6.1 in \cite{Sch13}. 

Before stating this result we define for every $f\in W^{2,2}_{ conf,m}( D , \R^n)$ (with the obvious modifications for $f\in W^{2,2}_{conf,br}(\mbb S^2,\R^n)$) the norm
\begin{align}
||f||^2_{W^{2,2}_u(D,\R^n)}:=\int_D \left(|f|^2+ |\nabla f|^2+|\nabla^2 f|^2\right)e^{-2u}\,\ dx.\label{norm}
\end{align}
The reason why this norm arises naturally in the problem under consideration is its invariance under the composition of $f$ with any M\"obius transformation of $\R^n$.
\begin{proposition}\label{lem_strong} 
Let $\gamma_n$ be as above and let $f_k \in W^{2,2}_{ conf,m}( D , \R^n) $ be a sequence of $m$-branched conformal immersions with induced metrics $(g_k)_{ij} = e ^{2u_k} \delta_{ij} $ and assume that 
 \begin{align*}
\int_{ D} | A _{k}| ^2 d \mu_{g_k} \leq \gamma <\gamma_n .
\end{align*}
Assume also that  we have uniformly bounded area $ \mu_{g_k} ( D)\leq C $ and $ f_k(0)=0$ and let us assume that $f_k$ does not converge to a constant map. Let us furthermore assume that for a subsequence we have
\begin{equation}\label{conserve2} 
 \int_{K }|H_{k } |^2 d \mu _ {g _ k} \rightarrow \int _{ K} | H | ^ 2 d \mu _g \ \ \ \forall \,\ K \subset \subset D.  
 \end{equation}
Then there exists a $m$-branched conformal immersion $f\in W^{2,2}_{ loc}( D , \R^n) $ such that for every $K \subset \subset D$ we have
\[
||f_k-f||_{W^{2,2}_{u_k}(K,\R^n)} \to 0.
\]
\end{proposition}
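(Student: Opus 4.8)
The plan is to reduce to case~(1) of Theorem~\ref{conv}, then to recognise that the no-energy-loss hypothesis \eqref{conserve2} says precisely that the Laplacians $\Delta f_k=e^{2u_k}H_k$ converge \emph{in norm} in the weighted space $L^2(K,e^{-2u_k}\,dx)$, to combine this with the (automatic) weak convergence so as to obtain strong convergence there by the Hilbert-space identity $\|x_k-x\|^2=\|x_k\|^2-2\langle x_k,x\rangle+\|x\|^2$, and finally to bootstrap from $\Delta f_k$ to all second derivatives by interior elliptic regularity. For the reduction: since $f_k$ does not collapse, Theorem~\ref{conv} holds in case~(1), so (refining the subsequence of \eqref{conserve2} if necessary) $f_k\rightharpoonup f$ weakly in $W^{2,2}_{loc}(D,\R^n)$ with $f$ an $m$-branched conformal immersion and $\omega_k:=u_k-m\log|\cdot|$ bounded in $L^\infty_{loc}(D)$. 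Writing $\omega_k=v_k+h_k$ as in the proof of Theorem~\ref{conv} one also gets $\omega_k$ bounded in $W^{1,2}_{loc}(D)$, solving $-\Delta\omega_k=K_{g_k}e^{2u_k}$ with $\int_D|K_{g_k}|e^{2u_k}\le\tfrac12\int_D|A_k|^2\,d\mu_{g_k}\le\tfrac12\gamma<\tfrac12\gamma_n$; by the M\"uller--{\v{S}}ver{\'a}k/Kuwert--Li analysis (this is exactly where the sub-threshold bound forbids curvature concentration) one may assume $\omega_k\to\omega:=u-m\log|\cdot|$ a.e.\ on $D$, so $e^{2(u-u_k)}=e^{2(\omega-\omega_k)}$ is locally uniformly bounded and tends to $1$ a.e. In addition, two-dimensional compact Sobolev embedding gives $f_k\to f$ in $C^0_{loc}$, $\nabla f_k\to\nabla f$ in $L^q_{loc}$ for every $q<\infty$, and $\nabla^2 f_k\rightharpoonup\nabla^2 f$ weakly in $L^2_{loc}$.

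The core step is to show $\int_K|\Delta(f_k-f)|^2e^{-2u_k}\,dx\to0$. Using $\Delta f_k=e^{2u_k}H_k$ and $\Delta f=e^{2u}H$ — which hold on all of $D$, in particular across the branch point, because $f_k,f\in W^{2,2}_{loc}$ rules out Dirac contributions, exactly as in the proof of Theorem~\ref{conv} — one expands
\[
\int_K|\Delta(f_k-f)|^2e^{-2u_k}\,dx=\int_K|H_k|^2\,d\mu_{g_k}-2\int_K e^{2(u-u_k)}\langle\Delta f_k,H\rangle\,dx+\int_K e^{2(u-u_k)}|H|^2\,d\mu_g.
\]
By \eqref{conserve2} the first term tends to $\int_K|H|^2\,d\mu_g$; by dominated convergence (bounded weights converging to $1$, with $|H|^2\,d\mu_g/dx\in L^1_{loc}$) the third term tends to the same limit; and since $\Delta f_k\rightharpoonup\Delta f$ weakly in $L^2_{loc}$ while $e^{2(u-u_k)}H\to H$ strongly in $L^2_{loc}$, the weak--strong pairing in the middle term passes to the limit, giving $-2\int_K\langle\Delta f,H\rangle\,dx=-2\int_K|H|^2\,d\mu_g$. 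The three limits cancel, proving the claim. I regard this as the heart of the matter: it is exactly the no-loss hypothesis \eqref{conserve2} that supplies the norm-convergence term, without which only weak convergence would be available.

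In the unbranched case $m=0$ (which covers all charts needed for Theorems~\ref{rigidity} and~\ref{rigidity2}), $u_k$ is bounded in $L^\infty_{loc}(D)$, so on any $K'\subset\subset K\subset\subset D$ the weight $e^{-2u_k}$ is two-sidedly bounded uniformly in $k$; the previous paragraph then gives $\Delta(f_k-f)\to0$ in $L^2(K)$, and the interior $L^2$-estimate for the Laplacian yields
\[
\|f_k-f\|_{W^{2,2}(K')}\le C\big(\|\Delta(f_k-f)\|_{L^2(K)}+\|f_k-f\|_{L^2(K)}\big)\longrightarrow0,
\]
also using $f_k\to f$ in $C^0_{loc}$. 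Multiplying by the bounded weight $e^{-2u_k}$, all three terms making up $\|f_k-f\|^2_{W^{2,2}_{u_k}(K')}$ tend to $0$, which is the assertion. For $m\ge1$ the same argument works verbatim as long as $K'$ is kept away from the branch point $0$, where $e^{-2u_k}$ remains two-sidedly bounded.

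The remaining, and technically most delicate, point is the contribution near a branch point ($m\ge1$), where $e^{-2u_k}\sim|z|^{-2m}$ degenerates; this is where one must use the branch-point-adapted form of the norm (the ``obvious modification'' mentioned after \eqref{norm}, built so that the second-derivative integral is finite near a branch point — e.g.\ with weight $e^{-2\omega_k}=|z|^{2m}e^{-2u_k}$ there). On a small disc $D_\rho$ I would combine $f_k(0)=0$, $|\nabla f_k|^2=2e^{2u_k}$, the pointwise bounds $|f_k(z)|\le C|z|^{m+1}$, and the Gauss--Weingarten relations (expressing $\nabla^2 f_k$ through $\nabla\omega_k$, $\nabla f_k$ and the coordinate second fundamental form) with $\int_D|A_k|^2\,d\mu_{g_k}\le\gamma$ and the $W^{1,2}_{loc}$-bound on $\omega_k$ to bound the corrected integrand on $D_\rho$ by a quantity whose $D_\rho$-integral is $O(\rho^{c})$ for some $c>0$, uniformly in $k$; one then applies the previous two paragraphs on $K\setminus D_\rho$, sends $k\to\infty$, and finally $\rho\to0$. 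I expect the main obstacle to be precisely this interplay of the compactness arguments with the $k$-dependent and, at branch points, degenerate weight; the conceptual heart of the proof, namely upgrading weak to strong convergence, is already the second paragraph, where \eqref{conserve2} provides the missing norm convergence of $\Delta f_k$.
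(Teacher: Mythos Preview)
Your core strategy—use \eqref{conserve2} together with weak convergence to get strong $L^2_{loc}$ convergence of $e^{u_k}H_k$ (equivalently of $e^{-u_k}\Delta f_k$), then upgrade to the full weighted $W^{2,2}$—matches the paper's. Your expansion of $\int_K e^{-2u_k}|\Delta(f_k-f)|^2\,dx$ is a clean variant of the paper's two-step argument (first $e^{u_k}H_k\to e^uH$ in $L^2_{loc}$, then $e^{2u_k}H_k\to e^{2u}H$ in $L^2_{loc}$, hence $\Delta(f_k-f)\to 0$ and $\|f_k-f\|_{W^{2,2}_{loc}}\to 0$ by elliptic theory). One small caution: in your cross term you pair $\Delta f_k\rightharpoonup\Delta f$ against $e^{2(u-u_k)}H\to H$ in $L^2_{loc}$, but for $m\ge1$ the bare $H$ need not lie in $L^2_{loc}$ (only $e^uH$ does); rewrite the pairing as $\langle e^{u_k}H_k,\,e^{u-u_k}e^uH\rangle$ and it goes through. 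For $m=0$ your route (unweighted interior elliptic estimate, then multiply by the two-sidedly bounded weight) is correct and in fact simpler than what the paper does.

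The real divergence from the paper is in the weighted second-derivative term for $m\ge1$, which you leave as a sketch and for which you even contemplate changing the weight to $e^{-2\omega_k}$. The paper does neither: it keeps the weight $e^{-2u_k}$ and treats all $m$ uniformly via two structural identities for conformal immersions. First, from strong $L^2_{loc}$ convergence of each component $e^{-u_k}A_{k,ij}$ (which the paper obtains by the same Hilbert-space mechanism) together with $K_g=e^{-4u}(\langle A_{11},A_{22}\rangle-|A_{12}|^2)$, one gets $K_{g_k}e^{2u_k}\to K_ge^{2u}$ in $L^1_{loc}$; testing the Liouville equation $-\Delta(u_k-u)=K_ge^{2u}-K_{g_k}e^{2u_k}$ against $\eta^2(u_k-u)$ then forces $Du_k\to Du$ in $L^2_{loc}$. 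Second, the pointwise identity $e^{-2u}|D^2f|^2=4|Du|^2+\sum_{i,j}|A_{ij}|^2$ expresses the weighted Hessian entirely through quantities already shown to converge strongly, giving $e^{-u_k}D^2f_k\to e^{-u}D^2f$ in $L^2_{loc}$ and hence the weighted difference by direct expansion. This replaces your $D_\rho$-limiting argument and your reinterpretation of the norm; for the branched applications (Theorems~\ref{rigidity3} and~\ref{rigidity4}) this Liouville-equation step is the concrete ingredient your proposal is missing.
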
 
\begin{proof} 
From Theorem \ref{conv} we get the existence of a $m$-branched conformal immersion $f\in W^{2,2}_{loc}(D,\R^n)$ so that $f_k \rightharpoonup f$ weakly in $W^{2,2}_{loc}(D,\R^n)$. We denote by $u\in L^\infty_{loc}(D)$ the pointwise a.e. limit of $u_k$.

In a first step we show that $H_k e^{u_k}$ converges weakly to $He^u$ in $L^2_{loc}(D,\R^n)$. In order to see this we note that $ e ^ {u_k }H_k \rightharpoonup e ^ u H$ in $L^2_{loc}(D\backslash \{0\},\R^n)$ since $u_k$ converges pointwise almost everywhere to $u$ away from the origin.
Furthermore the hypothesis implies that $ e ^ {u _k } H _ k $ is uniformly bounded in $ L ^ 2_{loc}( D)$. Therefore, up to a subsequence, $ e ^{u_k } H_k $ weakly converges to some $\phi \in L ^ 2 (D)$. Hence, we must have the measure convergence
\begin{align*}
e^{u_k} H _k dx \rightharpoonup \phi dx = e ^ u H dx +  c \delta_0,  
\end{align*}    
where $\delta_0$ denotes the Dirac delta distribution supported on $\{0\}$. Since $\phi$ and $e^u H$ are in $L^2_{loc}(D,\R^n)$ this is only possible for $c=0$ and therefore $e^{u_k} H_k$ converges weakly to $e^u H$ in $L^2_{loc}(D,\R^n)$.

Next we let $K\subset \subset D$ and we conclude from the above
\begin{align*}
 \int_K |H_ke^{u_k} - He^{u}|^2 dx =& \int_{K }|H_{k } |^2 d \mu _ {g _ k}+\int _{ K} | H | ^ 2 d \mu _g-2\int_K H_k e^{u_k} H e^{u} dx\to 0.
\end{align*}
Hence $H_k e^{u_k}$ converges to $He^u$ strongly in $L^2_{loc}(D,\R^n)$. 

We note that the same argument implies that
\[
e^{-u_k} A_{k,ij} \to e^{-u} A_{ij}
\]
strongly in $L^2_{loc}(D)$ for all $1\le i,j \le 2$ and
\[
e^{-u_k} Df_k \to e^{-u} Df
\]
strongly in $L^2_{loc}(D)$.

Moreover we estimate
\begin{align*}
\| H _k e ^ { 2 u_k} - H e ^ { 2 u} \| _{ L ^ 2 ( K) } \leq \|H_k e ^ { 2u_ k} - e ^u H e ^ { u_k} + e ^ u H e ^ { u _ k } - H e ^{2u}    \|_{ L^2 ( K) } \\
\leq \| e ^ {u _k } \| _{ L ^ \infty(K)}   \| H _k e ^{u_k}- H e ^ u   \|_{ L ^ 2 ( K) } + \| ( e ^ {u_k} - e ^ u ) e ^ u H \| _{ L^ 2 ( K) }.       
\end{align*}
As $ \| e ^{u_k } \| _{L^ \infty(K) } \leq C$ and $ e ^ {u_k} H_k \rightarrow e ^ u H $ strongly in $ L^2(K,\R^n) $, the first term converges to $0$ as $k\to \infty$. Furthermore, the second term converges to zero by the dominated convergence theorem. Altogether this shows that
\begin{equation}\label{strong}
 H_k e^{2 u_k} \to He^{2u}\ \ \ \text{strongly}\,\ \text{in}\ \ L^2_{loc}(D).
\end{equation}
Finally, we recall that for conformal immersions $f_k$ and $f$ we have
\[
 \Delta (f_k-f)=H_k e^{2 u_k}-He^{2u}
\]
and standard elliptic theory and the Sobolev embedding theorem imply
\[
||f_k-f||_{L^\infty_{loc}(D,\R^n)}+||f_k-f||_{W^{2,2}_{loc}(D,\R^n)} \to 0.
\]
Next we want to improve this result in order to get the desired convergence in the weighted norm. In order to do this, we first note that $|f_k-f|(z)\le C|z|^{m+1}$ for all $z\in D$ (see for example the proof of Theorem 3.1 in \cite{Kuwert2010}). Hence we get for all $K\subset \subset D$ by using the dominated convergence theorem
\[
\int_K e^{-2u_k} |f_k-f|^2 \le C||f_k-f||_{L^\infty_{loc}(D,\R^n)}^{\frac{2}{m+1}} \to 0.
\]
Moreover, using that $e^{-u_k}Df_k$ converges strongly in $L^2_{loc}$ to $e^{-u} Df$, we have for all $K\subset \subset D$
\begin{align*}
\int_K e^{-2u_k} |D(f_k-f)|^2=& \int_K( 2+2e^{2u-2u_k}-2e^{-2u_k}\langle Df_k,Df \rangle)\\
=&\int_K\left(4-2\langle e^{-u_k}Df_k,e^{-u} Df\rangle \right)\\
&+2\int_K \left(e^{2u-2u_k}-1-(e^{u-u_k}-1)\langle e^{-u_k} Df_k, e^{-u}Df \rangle\right)\\
\to& 0.
\end{align*}
It remains to show that the second derivatives converge to zero in the weighted norm. In order to do this we recall the formula
\[
K_g=e^{-4u} (\langle A_{11},A_{22} \rangle-|A_{12}|^2)
\]
which is valid for any $W^{2,2}$-conformal immersion. Using this expression and the strong $L^2_{loc}(D)$-convergence of $e^{-u_k} A_{k,ij}$ to $e^{-u} A_{ij}$, for every $1\le i,j\le 2$, we get for every $K\subset \subset D$
\[
\int_K |K_{g_k}e^{2u_k}-K_g e^{2u}| \to 0.
\]
Since $u_k-u$ solves the equation
\[
-\Delta (u_k-u) =K_ge^{2u} -K_{g_k} e^{2u_k}
\]
we have for every smooth cut-off function $\eta \in C^\infty_c(D)$
\begin{align*}
\int_D  \eta^2 |D (u_k-u)|^2\le& C\int_D \eta^2 |K_{g_k}e^{2u_k}-K_g e^{2u}|+C\int_D |D\eta|^2 |u_k-u|^2\\
\to& 0,
\end{align*}
where we used the dominated convergence theorem and the above convergence result involving the Gauss curvatures. In particular we conclude that $Du_k$ converges to $Du$ strongly in $L^2_{loc}(D)$.

Finally, we also recall the formula
\[
e^{-2u} |D^2 f|^2= 4|Du|^2 +\sum_{i,j=1}^2 |A_{ij}|^2
\]
for every conformal immersion $f$. 

In particular, we conclude that $e^{-u_k} D^2f_k$ and $e^{-u} D^2f$ are uniformly bounded in $L^2_{loc}(D)$ and as above this shows that $e^{-u_k} D^2 f_k$ converges strongly in $L^2_{loc}(D)$ to $e^{-u}D^2 f$. Similarly to the estimate for the first derivatives we then finish the proof by noting that
\begin{align*}
\int_K e^{-2u_k} |D^2(f_k-f)|^2=&\int_K \big(e^{-2u_k}|D^2 f_k|^2+e^{2u-2u_k}e^{-2u} |D^2 f|^2\\
&-2e^{u-u_k} \langle e^{-u_k}D^2f_k,e^{-u}D^2 f\rangle \big)\\
\to& 0.
\end{align*}
\end{proof}

\section{Quantitative rigidity results}

\subsection{Quantitative rigidity for minimisers of $\mc W$}

The first main result of this paper is the following theorem.
\begin{thmr}
There exists $\delta_0>0$ such that for every $0<\delta<\delta_0$ and every immersion $f\in W^{2,2}_{\text{conf}}(\mbb S^2, \R^n)$ with conformal factor $u$, satisfying $\int_{\mbb S^2} |A|^2 d\mu \le 8\pi+\delta$, there exists a constant $C(\delta)$, with $C(\delta)\to 0$ as $\delta \to 0$, and a standard immersion $f_{\text{round}}\in W^{2,2}_{\text{conf}}(\mbb S^2,\R^n)$ of a round sphere such that 
\begin{align}
||f -f_{round}||_{W_u^{2,2}(\mbb S^2,\R^n)}  \le C(\delta).
\label{closeness1}
\end{align} 
\end{thmr}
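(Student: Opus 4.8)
The plan is to argue by contradiction, exactly along the lines sketched in the introduction. Suppose the statement fails. Then there exist $\delta_k \to 0$ and conformal immersions $f_k \in W^{2,2}_{\mathrm{conf}}(\mbb S^2,\R^n)$ with conformal factors $u_k$ satisfying $\int_{\mbb S^2} |A_k|^2 d\mu_{g_k} \le 8\pi + \delta_k$, but such that $\inf \|f_k - f_{\mathrm{round}}\|_{W^{2,2}_{u_k}}$ stays bounded below by some $\e_0 > 0$, where the infimum runs over all standard immersions of round spheres. Since $\int_{\mbb S^2}|A_k|^2 d\mu_{g_k}$ is uniformly bounded, Proposition \ref{prop_globalconv} applies: after replacing $f_k$ by $\sigma_k \circ f_k$ for suitable M\"obius transformations $\sigma_k$, and passing to a subsequence, we get $f_k \rightharpoonup f$ weakly in $W^{2,2}_{loc}(\mbb S^2 \setminus \mc S, \R^n)$ for an at most finite set $\mc S$, with $f$ a branched conformal immersion of bounded total curvature. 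Note that the $W^{2,2}_{u}$-norm in \eqref{norm} is M\"obius-invariant, so replacing $f_k$ by $\sigma_k \circ f_k$ does not affect the contradiction hypothesis (the target round sphere is also transformed by $\sigma_k$, which is again a round sphere).

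The next step is to show that in fact $\mc S = \emptyset$ and there is no energy loss. Here the Gauss--Bonnet formula \eqref{gauss-bonnet} and the inversion formula \eqref{inversion3}, together with the Li--Yau type lower bound $\int |A|^2 d\mu \ge 8\pi$ for closed surfaces (with equality only for round spheres), give the decisive accounting. The total curvature $8\pi$ is the absolute minimum, so it cannot be split among several bubbles without violating the energy lower bound on each piece: a bubble forming at a point of $\mc S$, or the limit $f$ itself if nontrivial, would each carry at least $8\pi$, forcing the total to be at least $16\pi$ in the limit — contradicting $\limsup_k \int|A_k|^2 \le 8\pi$ unless there is exactly one piece and it is precisely a round sphere with no energy loss. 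More carefully, one uses the standard concentration-compactness decomposition for sequences of conformal immersions with bounded total curvature (as in Kuwert--Li): the energy of the limit plus the energies of the bubbles is at most $8\pi$, each term is $\ge 8\pi$ if it is a nontrivial closed (branched) surface, and at a putative branch point one invokes \eqref{inversion3} to see that inverting the branch would produce a closed immersion with even smaller energy, again below $8\pi$, which is impossible. Hence $f$ extends to a genuine (unbranched) conformal immersion of $\mbb S^2$ with $\int_{\mbb S^2}|A|^2 d\mu = 8\pi$, so by the equality case $f$ is a standard smooth immersion of a round sphere, and moreover $\int_K |H_k|^2 d\mu_{g_k} \to \int_K |H|^2 d\mu_g$ on compact subsets of each chart, i.e. there is no local energy loss.

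With no energy loss established, Proposition \ref{lem_strong} upgrades the weak $W^{2,2}_{loc}$ convergence to strong convergence in the weighted norm $W^{2,2}_{u_k}$ on compact subsets of conformal charts. Covering $\mbb S^2$ by finitely many such charts and patching (using that $\mc S = \emptyset$, so no point is excluded, and that $f$ is smooth near every point) yields $\|f_k - f\|_{W^{2,2}_{u_k}(\mbb S^2,\R^n)} \to 0$. Since $f$ is a standard immersion of a round sphere, this contradicts $\|f_k - f_{\mathrm{round}}\|_{W^{2,2}_{u_k}} \ge \e_0$ for the choice $f_{\mathrm{round}} = f$, completing the proof. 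The constant $C(\delta)$ with $C(\delta)\to 0$ then follows in the usual way: if no such $C(\delta)$ existed one could extract exactly the contradicting sequence above.

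I expect the main obstacle to be the energy-accounting step, namely ruling out all degeneration: one must simultaneously exclude (i) the limit $f$ collapsing to a point (Case 2 of Theorem \ref{conv}, handled by the global compactness which produces a nontrivial limit), (ii) the formation of one or more nontrivial bubbles at points of $\mc S$, and (iii) the limit $f$ being a branched immersion rather than an honest one. Each of these is individually excluded by the $8\pi$-threshold combined with \eqref{inversion3} and the rigidity of the Li--Yau inequality, but making the bubbling decomposition and the energy inequality $\mc W(f) + \sum_j \mc W(\text{bubble}_j) \le 8\pi$ precise in the $W^{2,2}_{\mathrm{conf}}$ setting — and checking that each bubble, after inversion, is a closed branched conformal immersion to which the lower bound applies — is the technical heart of the argument. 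The passage from local strong convergence to the global weighted estimate, while routine, also requires care because the weight $e^{-2u_k}$ degenerates and one must control $f_k$ uniformly near every point, which is exactly what the absence of concentration points and branch points guarantees.
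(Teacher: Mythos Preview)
Your overall strategy matches the paper's: argue by contradiction, apply Proposition~\ref{prop_globalconv}, show $\mc S=\emptyset$ by the $8\pi$ lower bound, identify the limit as a round sphere, and upgrade to strong convergence via Proposition~\ref{lem_strong}. The energy-accounting step is in fact simpler than you make it: once $\sigma_k\circ f_k\rightharpoonup f$ weakly, any $p\in\mc S$ forces a \emph{strict} loss $\lim_k\int_{B_\varrho(p)}|A_k|^2 > \int_{B_\varrho(p)}|A|^2$, hence $8\pi=\lim_k\int|A_k|^2>\int|A|^2\ge 8\pi$, a contradiction. No bubble-tree decomposition is needed.

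There is, however, a genuine gap. You dispose of the M\"obius transformations $\sigma_k$ by asserting that the $W^{2,2}_u$-norm of the \emph{difference} $f_k-f_{\mathrm{round}}$ is M\"obius-invariant. This is not justified. For dilations, translations and rotations it is true by inspection, but the $\sigma_k$ supplied by Proposition~\ref{prop_globalconv} may include an inversion $I_{x_0}$ (see the Remark following that proposition), and for inversions the weighted norm of a difference is \emph{not} preserved. The paper treats this as a separate second step: it proves the one-sided inequality
\[
\|f-f_{\mathrm{round}}\|_{W^{2,2}_u(\mbb S^2,\R^n)}\le C\,\|\sigma\circ f-\sigma\circ f_{\mathrm{round}}\|_{W^{2,2}_v(\mbb S^2,\R^n)}
\]
by first using Simon's diameter lemma to obtain uniform two-sided bounds $1\le |g|\le C$ for $g=I\circ f$ (after normalising so that $g(\mbb S^2)\cap B_1(0)=\emptyset$ and $\mu_g(\mbb S^2)=1$), and then writing down explicit pointwise estimates for $|I\circ g-I\circ g_0|$, $|D(I\circ g)-D(I\circ g_0)|$ and $|D^2(I\circ g)-D^2(I\circ g_0)|$ in terms of $|g-g_0|$, $|Dg-Dg_0|$, $|D^2g-D^2g_0|$. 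Without this step your contradiction argument only yields closeness of $\sigma_k\circ f_k$ to a round sphere in the $W^{2,2}_{v_k}$-norm, not closeness of $f_k$ itself in the $W^{2,2}_{u_k}$-norm, which is what the theorem claims.
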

\begin{remark}\label{compare}
In order to compare this result to Theorem \ref{thm1} we first note that for every closed surface $\Sigma$ and every immersion $f\in W^{2,2}\cap W^{1,\infty}(\Sigma ,\R^n)$ satisfying
\begin{align}
 \int_\Sigma |A|^2 d\mu <12\pi, \label{3pi} 
\end{align}
we have that $\Sigma$ is conformal to $\mbb S^2$. Indeed, using \eqref{Willeq} and the fact that $\mc W(f) \ge 4\pi$ for every immersion, we conclude that $\chi(\Sigma) > 1$.
This shows that $\Sigma$ must be homeomorphic to $\mbb S^2$. The result of Jost (see Theorem 3.1.1 in \cite{jost91}) yields the existence of a conformal parameterisation $h\in W^{1,2}(\mbb S^2,\Sigma)$. Finally, if $f\in C^\infty(\Sigma, \R^n)$ then $h\in C^\infty (\mbb S^2, \Sigma)$.
In particular, we conclude that for every smooth immersion $f\in C^\infty (\Sigma,\R^n)$ of a closed Riemann surface $\Sigma$ satisfying \eqref{3pi}, there exists a smooth conformal parameterisation $h:\mbb S^2 \to f(\Sigma)$, such that $h \in W^{2,2}_{\text{conf}}(\mbb S^2,\R^n)$.

Moreover, we note that \eqref{Willeq} implies that the bound $ \int_{\mbb S^2} |A|^2 d\mu \le 8\pi+\delta$ is equivalent to 
\[
\int_{\mbb S^2} |\circo A|^2 d\mu \le \frac{\delta}{2}.
\]
Altogether, we conclude that Theorem \ref{rigidity} is an extension of Theorem \ref{thm1} to arbitrary codimensions and non-smooth immersions. 
\end{remark}
\begin{proof}[Proof of Theorem \ref{rigidity}]
We prove this Theorem in two steps. First we show that under the assumptions of the Theorem, there exists a M\"obius transformation $\sigma:\R^n \to \R^n$ possibly consisting of a translation, a dilation and an inversion $I_{x_0}$ for some $x_0 \in \R^n$ with $B_1(x_0) \cap f(\mbb S^2) =\emptyset$ so that 
\begin{align}
||\sigma \circ f -f_{round}||_{W^{2,2}_v(\mbb S^2,\R^n)}  \le C(\delta),
\label{closeness2}
\end{align} 
where $v$ is the conformal factor of the conformal immersion $\sigma \circ f$. We show this result by contradiction. Hence we assume that there exists a sequence $\delta_k\to 0$ and a sequence of immersions $f_k\in W^{2,2}_{conf}(\mbb S^2, \R^n)$ such that 
\begin{align}
8\pi \le   \int_\Sigma |A_k|^2 d\mu_k \le 8\pi+\delta_k   \label{contra1}
\end{align}
but 
\begin{align}
||\sigma_k \circ f_k-f_{round}||_{W_{v_k}^{2,2}(\mbb S^2,\R^n)} \ge \e_0 ,\label{closenessa}
\end{align} 
for some number $\e_0>0$, for every standard round immersions $f_{round}$ of the round sphere and for all sequences of M\"obius transformations $\sigma_k:\R^n\to \R^n$ as described above. 

It follows from \eqref{gauss} that 
\[
4\pi\le  \mc W(f_k)=\frac14 \int_{\mbb S^2} |H_k|^2 d\mu_k \le \frac14  \int_{\mbb S^2} |A_k|^2 d\mu_k +2\pi \to 4\pi.
\]

We can apply Proposition \ref{prop_globalconv} in order to get the existence of an at most finite set $\mc S \subset \mbb S^2$ and a sequence of M\"obius transformations $\tilde{\sigma}_k$ so that $\tilde{f}_k:=\tilde{\sigma}_k \circ f_k$ converges weakly in $W^{2,2}_{loc}(\mbb S^2 \backslash \mc S,\R^n)$ to a map $f\in W^{2,2}_{conf,br}(\mbb S^2 ,\R^n)$. 
 
Next we claim that $\mc S=\emptyset$. We recall that $\mc S$ is defined by
\[
 \mc S=\{ p\in \mbb S^2:  \alpha(\{p\}) \ge \gamma_n\},
\]
where $\gamma_n$ is as in Theorem \ref{conv} and $\alpha$ is the limit of $\mu_{g_k} \llcorner |A_k|^2$ as Radon measures. For a point $p\in \mc S$ and a small radius $\varrho>0$ we then have that 
\[
\lim_{k\to \infty} \int_{B_\varrho(p)}|A_k|^2d\mu_k >\int_{B_\varrho(p)}|A|^2d\mu.
\]
But this clearly contradicts the fact that
\[
 8\pi = \lim_{k\to \infty} \int_{\mbb S^2}|\tilde{A}_k|^2 d\tilde{\mu}_k=\lim_{k\to \infty} \int_{\mbb S^2} |A_k|^2 d\mu  > \int_{\mbb S^2} |A|^2 d\mu \ge 8\pi 
\]
and hence we conclude that $\mc S=\emptyset$. This implies that $f\in W^{2,2}_{conf}(\mbb S^2,\R^n)$ with $\mc W(f)=4\pi$. Inverting $f(\mbb S^2)$ at any point on the surface gives a minimal surface with one end and $\int |A|^2 d\mu =0$ by \eqref{inversion2}, \eqref{inversion3}. Hence it must be a standard smooth immersion of the plane. Therefore $f$ itself must be a standard smooth immersion of a round sphere. Finally, we show that $\tilde{f}_k$ converges strongly to $f$ in the weighted norm $W_{v_k}^{2,2}(\mbb S^2,\R^n)$. In order to do this, we let $B$ be any disc in $\mbb S^2$ which is conformal to the unit disc $D\subset \R^2$ and we note that the above results imply that we can apply Proposition \ref{lem_strong} to conclude the strong convergence on $B$. Altogether, this yields a contradiction to \eqref{closenessa} and finishes the proof of the first step.

In the second step we show that there exists a constant $C>0$ such that 
\begin{align}
||f -f_{round}||_{W_u^{2,2}(\mbb S^2,\R^n)} \le& C||\sigma \circ f -\sigma \circ f_{round}||_{W^{2,2}_v(\mbb S^2,\R^n)} \nonumber \\
=& C||\sigma \circ f - f_{round}||_{W^{2,2}_v(\mbb S^2,\R^n)},\label{stereo}
\end{align}
where we used that a M\"obius transformation of a standard immersion of a round sphere is again a standard immersion of the round sphere.

Without loss of generality we can assume that $\sigma=I_0=:I$, $(I \circ f)\cap B_1(0) =\emptyset$ and $\mu_{I \circ f}(\mbb S^2)=1$. Defining $g=I\circ f$ and $g_0=I \circ f_{round}$, we see that the estimate \eqref{stereo} is equivalent to
\[
||I\circ g -I \circ g_0||_{W_u^{2,2}(\mbb S^2,\R^n)} \le C||g -g_0||_{W^{2,2}_v(\mbb S^2,\R^n)}
\]
with $v=u+2\log|g|$.

Using Lemma $1.1$ in \cite{simon93} and the uniform bounds on the area and Willmore energy of $g$, we conclude for all $x\in \mbb S^2$
\[
1\le |g|(x) \le C.
\]
Moreover, for $\delta$ small enough, we also get for all $x\in \mbb S^2$
\[
\frac12 \le |g_0|(x)\le 2C.
\]
Now standard calculations show the pointwise estimates
\begin{align*}
|I\circ g-I\circ g_0|=& |g-g_0| |g|^{-1} |g_0|^{-1},\\
|D(I\circ g)-D(I\circ g_0)|\le& c (|g|^{-2}+|g_0|^{-2})|Dg-Dg_0|\\
&+c(|g|^{-3}+|g_0|^{-3})(|Dg|+|Dg_0|)|g-g_0|\ \ \ \text{and}\\
|D^2(I\circ g)-D^2(I\circ g_0)|\le& c (|g|^{-2}+|g_0|^{-2})|D^2g-D^2g_0|\\
&+c(|g|^{-3}+|g_0|^{-3})\Big((|D^2g|+|D^2g_0|)|g-g_0|\\
&+(|Dg|+|Dg_0|)|Dg-Dg_0|\Big)\\
&+c(|g|^{-4}+|g_0|^{-4})(|Dg|^2+|Dg_0|^2)|g-g_0|.
\end{align*}
Combining all these estimates and using that $g,g_0\in W^{2,2}(\mbb S^2,\R^n)$ proves \eqref{stereo}.
\end{proof}

Next we extend the previous result to the case of surfaces of arbitrary genus if we assume that the Willmore conjecture is true. However, in the previous theorem we used strongly the fact that the conformal structure on the sphere is unique. For tori and higher genus surfaces, the moduli space is more complicated. The following theorem, requires that we restrict the immersions to a specific conformal class. For $n\ge 3$ and $p \in \N$ we define
\[
\beta^n_p:= \inf\{ \mc W(f): \,\ f:\Sigma\to \R^n\ \ \text{smooth, closed immersion},\ \ \text{genus}(\Sigma)=p\}.  
\]
Simon \cite{simon93} showed that $\beta^n_p$ is attained for $p=1$ and for $p\ge 2$ under the additional assumption
\[
 \beta^n_p< \min\{ 4\pi+\sum_i (\beta^n_{p_i} -4\pi):\,\ \sum_i p_i=p,\ \ 1\le p_i<p\}=: \omega^n_p.
\]
Later, Bauer and Kuwert \cite{B-K}, showed that the above inequality is always satisfied and hence there exists a minimising Willmore surface for every genus. The (generalised) Willmore conjecture now states that 
this minimiser is unique modulo M\"obius transformations. Moreover, it is conjectured that the minimisers are the stereographic images of the minimal surfaces in $\mbb S^n$ found by Lawson \cite{lawson70}. 

Finally we remark that Pinkall \cite{kuehnel86} and Kusner \cite{kusner89} showed that the Willmore energy of the stereographic images of Lawson's minimal surfaces is strictly less than $8\pi$. In particular, by combining the above results, we have for every $p\in \N$
\begin{align}
 \beta^n_p <\min\{ 8\pi, \omega^n_p\}. \label{estbeta} 
\end{align}
In the following we assume that the Willmore conjecture is true, that is for every $p\in \N$ there exists a unique (up to M\"obius transformations) Willmore immersion $f_p:\Sigma_p \to \R^n$ with $\text{genus}(\Sigma_p)=p$ and $\mc W(f_p)=\beta^n_p$.
\begin{theorem}\label{genus}
Let $p\in \N$ and let $\Sigma$ be a smooth Riemann surface of genus $p$ for which there exists a smooth conformal diffeomorphism $\phi: \Sigma_p \to \Sigma$. There exists $\delta_p>0$ such that for every $0<\delta <\delta_p$ and every immersion $f\in W^{2,2}_{conf}(\Sigma,\R^n)$, satisfying $\mc W(f) \le \beta^n_p+\delta$, there exists a M\"obius transformation $\sigma:\R^n \to \R^n$, a constant $C(\delta)$, with $C(\delta)\to 0$ as $\delta \to 0$, and a standard immersion $f_p\in W^{2,2}_{conf}(\Sigma_p,\R^n)$ of the assumed minimiser $\Sigma_p$ of $\mc W$ of genus $p$ such that
\begin{align}
||\sigma \circ f\circ \phi -f_p||_{W_u^{2,2}(\Sigma_p,\R^n)} \le C(\delta),\label{genus1} 
\end{align}
where $u$ is the conformal factor of the conformal immersion $\sigma \circ f\circ \phi$.
\end{theorem}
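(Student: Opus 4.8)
The plan is to mimic the proof of Theorem \ref{rigidity} (the round sphere case) with the modifications forced by the higher genus setting, arguing by contradiction. Suppose the statement fails: then there exist a genus $p$ surface $\Sigma$ conformal to $\Sigma_p$ via some fixed $\phi$, a sequence $\delta_k \to 0$, and immersions $f_k \in W^{2,2}_{conf}(\Sigma,\R^n)$ with $\mc W(f_k) \le \beta^n_p + \delta_k$ such that no M\"obius transformation $\sigma_k$ brings $\sigma_k \circ f_k \circ \phi$ within $\varepsilon_0$ of any standard immersion $f_p$ in the weighted norm $W^{2,2}_{u_k}(\Sigma_p,\R^n)$. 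Since $\mc W(f_k)$ is bounded, by \eqref{Willeq} the total curvature $\int_\Sigma |A_k|^2 d\mu_k = 2\mc W(f_k) + 4\pi(p-1)$ is uniformly bounded, so Proposition \ref{prop_globalconv} applies: after passing to a subsequence and composing with M\"obius transformations $\tilde\sigma_k$, the maps $\tilde f_k = \tilde\sigma_k \circ f_k \circ \phi$ converge weakly in $W^{2,2}_{loc}(\Sigma_p \setminus \mc S, \R^n)$ to a branched conformal immersion $f: \Sigma_p \to \R^n$ with bounded total curvature, where $\mc S$ is an at most finite concentration set.

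The first key step is to show $\mc S = \emptyset$ and that $f$ is unbranched. As in the round sphere proof, at each $p \in \mc S$ a definite amount $\gamma_n$ of curvature concentrates, and similarly at each branch point of $f$ the inversion formulas \eqref{inversion2}, \eqref{inversion3} force extra energy. Quantitatively, one must show that the total curvature of the limit $f$, plus the contributions lost at $\mc S$ and at branch points, would exceed $2\beta^n_p + 4\pi(p-1)$; here the strict inequalities \eqref{estbeta}, namely $\beta^n_p < \min\{8\pi, \omega^n_p\}$, are precisely what is needed. Specifically: if $\mc S \ne \emptyset$ then a connected component bubbles off carrying $\ge \gamma_n$ of curvature; combined with the lower semicontinuity of total curvature on the remaining piece one obtains $\mc W(f_k) \ge \beta^n_p + $ (a positive amount bounded away from zero by $\omega^n_p - \beta^n_p > 0$ via the genus decomposition argument of Simon), contradicting $\mc W(f_k) \to \beta^n_p$. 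A similar bubbling/energy-quantization argument rules out branch points. Hence $f \in W^{2,2}_{conf}(\Sigma_p, \R^n)$, $\tilde f_k \rightharpoonup f$ weakly in $W^{2,2}$ on all of $\Sigma_p$, and $\mc W(f) \le \beta^n_p$; since $f$ is a genuine genus $p$ immersion, $\mc W(f) = \beta^n_p$, so $f = f_p$ is one of the assumed minimisers. It is here that the hypothesis that $\Sigma$ lies in the fixed conformal class of $\Sigma_p$ is essential: the limit immersion automatically parametrizes a surface in that conformal class, and by the Willmore conjecture hypothesis the unique minimiser there is $f_p$.

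The second key step is upgrading the weak convergence to strong convergence in the weighted norm. Because $\mc S = \emptyset$ and $f$ is unbranched, there is no loss of $\int |H_k|^2 d\mu_k$ on compact pieces, i.e. the hypothesis \eqref{conserve2} of Proposition \ref{lem_strong} holds locally (this follows from the energy balance $\int_{\Sigma_p} |H_k|^2 d\mu_k \to \int_{\Sigma_p}|H|^2 d\mu$ together with lower semicontinuity on each coordinate disc). Covering $\Sigma_p$ by finitely many discs $B$ conformal to the unit disc and applying Proposition \ref{lem_strong} (in its $W^{2,2}_{conf,br}$-variant, here with $m=0$) on each, we get $\|\tilde f_k - f\|_{W^{2,2}_{u_k}(\Sigma_p,\R^n)} \to 0$. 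This contradicts the assumed lower bound $\varepsilon_0$ and completes the contradiction argument, giving \eqref{genus1} with $\sigma = \tilde\sigma_k$. Finally, as in the second step of the proof of Theorem \ref{rigidity}, one removes the M\"obius transformation: if $\sigma$ involves an inversion one uses the pointwise estimates relating $|D^j(I\circ g) - D^j(I\circ g_0)|$ to $|D^j g - D^j g_0|$ together with the uniform two-sided bounds $c^{-1} \le |g| \le c$ coming from Lemma 1.1 in \cite{simon93} and the area/Willmore bounds, noting that a M\"obius image of a minimiser $f_p$ is again such a minimiser.

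The main obstacle I expect is the quantitative energy accounting that forces $\mc S = \emptyset$ and rules out branch points: one must carefully track how much curvature is carried away by each bubble and combine lower semicontinuity with the genus-additivity of the Willmore energy under degeneration, and this is exactly where the strict inequality $\beta^n_p < \omega^n_p$ from Bauer--Kuwert (together with $\beta^n_p < 8\pi$) must be invoked. A secondary subtlety is ensuring that the limit really lies in the conformal class of $\Sigma_p$ — since $\phi$ is fixed and the $\tilde\sigma_k$ are only M\"obius transformations of the target, the domain conformal structure is preserved, so the limit $f$ is an immersion of $\Sigma_p$ itself, and uniqueness within that class (granted by the assumed Willmore conjecture) pins down $f = f_p$.
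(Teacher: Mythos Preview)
Your approach matches the paper's: contradiction, Proposition~\ref{prop_globalconv}, identification of the limit as $f_p$, then strong convergence via Proposition~\ref{lem_strong}. Two points of divergence deserve comment. First, your mechanism for ruling out $\mc S$ and branch points --- bubbling, genus decomposition, the gap $\omega^n_p - \beta^n_p$ --- is misdirected here: the domain $\Sigma_p$ is fixed throughout, so no genus can split off and Simon's decomposition argument does not enter. The simpler route (implicit in the paper's ``the rest follows as in Theorem~\ref{rigidity}'') is: lower semicontinuity and conformal invariance give $\mc W(f) \le \beta^n_p < 8\pi$ by \eqref{estbeta}; the Li--Yau inequality then forbids branch points (any branch point yields density $\ge 2$, hence $\mc W \ge 8\pi$); thus $f$ is an honest genus-$p$ immersion with $\mc W(f) = \beta^n_p$, and exactly as in the sphere case the equality $\int_{\Sigma_p}|A|^2 d\mu = \lim_k\int_{\Sigma_p}|A_k|^2 d\mu_k$ forces $\mc S = \emptyset$. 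The paper additionally cites moduli-space compactness results from \cite{Kuwert2010} and \cite{Rivi`ere2011}, though with the conformal class fixed these appear to be extra caution rather than an essential ingredient. Second, your final step removing the M\"obius transformation is unnecessary: the conclusion~\eqref{genus1} retains $\sigma$, so only the first half of the Theorem~\ref{rigidity} argument is needed here.
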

\begin{proof}
Since the proof is very similar to the first part of the proof of Theorem \ref{rigidity} we only sketch the main ideas. We argue by contradiction, that is we consider sequences $\delta_k\to 0$, $f_k \in W^{2,2}_{conf}(\Sigma_k,\R^n)$ and smooth conformal diffeomorphisms $\phi_k:\Sigma_p \to \Sigma_k$ such that 
\begin{align*}
 \beta^n_p\le \mc W(f_k) \le \beta^n_p+\delta_k \to \beta^n_p 
\end{align*}
and
\begin{align*}
 ||\sigma_k \circ f_k\circ \phi_k -f_p||_{W_{u_k}^{2,2}(\Sigma_p,\R^n)} \ge \e_0
\end{align*}
for some $\e_0>0$, all sequences of M\"obius transformations $\sigma_k:\R^n\to \R^n$ as above and all standard immersions of the assumed minimiser $f_p$ as above. 

Using \eqref{estbeta}, it follows from Theorem $5.3$ and Theorem $5.5$ in \cite{Kuwert2010}, respectively Theorem I.$1$ in \cite{Rivi`ere2011}, that $f_k\circ \phi_k$ converges in moduli space. Hence, we can apply Proposition \ref{prop_globalconv} in order to conclude that there exists a sequence of M\"obius transformations $\tilde{\sigma}_k:\R^n\to \R^n$ as in the statement of the Theorem, so that $\tilde{f}_k:=\tilde{\sigma}_k\circ f_k \circ \phi_k$ converges weakly in $W^{2,2}$ to a conformal immersion $f$ away from an at most finite set $\mc S$. 
Since $\mc W(f)=\beta^n_p$ and we assumed the validity of the Willmore conjecture we conclude that $f$ must be a standard Willmore minimising immersion of $\Sigma_p$. The rest of the argument now follows as in the first part of the proof of Theorem \ref{rigidity}.  
\end{proof}
\begin{remark}
In a recent paper \cite{Marques2012}, Marques and Neves prove the Willmore conjecture for tori in codimension one. Together with the above theorem, this yields a quantitative rigidity result for conformal immersions of the Clifford torus in $\R^3$ in its conformal class.

We also note that in two interesting papers, Ndiaye and Sch\"atzle \cite{NS1}, \cite{NS2}, showed that the Clifford torus minimises the Willmore energy in an open neighbourhood in moduli space of its conformal class.
\end{remark}

\subsection{Quantitative rigidity for spheres with a double point or a branch point}

The proof of Theorem \ref{rigidity} above is very flexible and in this subsection we will adapt it to prove similar statements corresponding to higher energy levels.
\subsubsection{Codimension one}
Let us consider an immersion $f\in W^{2,2}_{conf}(\mbb S^2, \R^3)$, where $ f$ has a point of multiplicity two $x \in f ( \Sigma)$ such that $ f^{-1} ( x) = \{ p_1, p_2\}$ and $p_1\not= p_2$. It follows from \eqref{inversion2} that $\mc{W}( f)\geq 8 \pi$. Furthermore, by the Gauss-Bonnet formula, we have that
\begin{align*}
 \int_{\mbb S^2 } | A|^{2} d\mu &= -2\int_{\mbb S^2} Kd \mu +  \int_{\mbb S^2} H^2 d \mu \geq - 8 \pi + 32 \pi = 24 \pi.   
\end{align*}   
We remark that every $f\in W^{2,2}_{\text{conf}}(\mbb S^2, \R^3)$ with at least one point of multiplicity two such that $ f^{-1} ( x) = \{ p_1, p_2\}$ and which satisfies $\mc W(f)= 8\pi$ is a standard immersion of an inverted catenoid. Indeed, inverting the surface at the double point gives us a minimal surface with two ends, which is $C^\infty$ away from the inverted points. Hence this shows that the immersion must be an inversion of the catenoid by \eqref{inversion2}, \eqref{inversion3} and Theorem \ref{thm_hoss}.

In the following theorem we show that there is a rigidity result associated to such a lower energy bound in codimension one. 
\begin{thmr2}
There exists $\delta_{cat}=\delta_0>0$ such that for every $0<\delta<\delta_0$ and every immersion $f\in W^{2,2}_{\text{conf}}(\mbb S^2, \R^3)$ with conformal factor $u$ which has exactly one point of multiplicity two $ x \in f ( \mbb{S}^2)$ with $ f^{-1} (x) = \{ p_{1}, p_{2} \}$ and which satisfies $24 \pi \leq  \int_{\mbb S^2} |A|^2 d\mu \le 24\pi+\delta$, there exists a M\"obius transformation $\sigma:\R^n \to \R^n$, a reparameterisation $\phi:\mbb S^2 \to \mbb S^2$, a
constant $C(\delta)$, with $C(\delta)\to 0$ as $\delta \to 0$, and a standard immersion $f_{\text{cat}}\in W^{2,2}_{\text{conf}}(\mbb S^2,\R^3)$ of an inverted catenoid with
\begin{align}
||\sigma \circ f \circ \phi-f_{cat}||_{W_u^{2,2}(\mbb S^2,\R^3)} \le C(\delta)\sqrt{\mu_g(\mbb S^2)}. \label{closea}
\end{align} 
\end{thmr2}
\begin{proof} 
We prove this estimate by contradiction. Let us assume that there exists a sequence $\delta_k \rightarrow 0$ and a sequence of immersions $ f _k \in W ^{2,2} _{ conf} ( \mbb S^2, \R^ 3 )$ which have a point of multiplicity two $ x_k \in f_k ( \mbb{S}^2)$ such that $ f_k^{-1} (x_k) = \{ p_{1,k}, p_{2,k} \}$ and which satisfies 
\begin{align}\label{contrad}
24 \pi \leq  \int_{\mbb S^2} |A_k|^2 d\mu_k \le 24\pi+\delta_k .
\end{align} 
but 
\begin{align}
||\sigma_k \circ f_k \circ \phi_k-f_{cat}||_{W^{2,2}_{v_k}(\mbb S^2,\R^n)} \ge \e_0,\label{cat_close}
\end{align} 
for some $ \e_0 > 0$, all sequences of M\"obius transformations $\sigma_k$, all sequences of reparameterisations $\phi_k$ and all standard immersions $f_{cat}$ of an inverted catenoid. Here $v_k$ is the conformal factor of $\sigma_k \circ f_k \circ \phi_k$. Using the Gauss-Bonnet formula once more, we conclude 
\begin{align*}
8\pi \leq  \mc W( f_k ) = \frac{1}{4} \int_{ \mbb S^2} | H_k|^2 d \mu _ k \leq \frac{1}{4} \int_{ \mbb S^2} |A_k|^ 2 d \mu _ k + 2 \pi \rightarrow 8 \pi. 
\end{align*}

Applying Proposition \ref{prop_globalconv} gives the existence of a finite set $\mc{S} \subset \mathbb{S} ^ 2$ and a sequence of M\"obius transformations $\tilde{\sigma}_k:\R^n\to \R^n$ so that $\tilde{f}_k:= \tilde{\sigma}_k \circ f _k$ converges weakly in $ W^{2,2}_{loc} ( \mbb S^2 \backslash \mc {S}, \R^3 )$ to $\tilde f\in W^{2,2}_{loc}(\mbb S^2 \backslash \mc S,\R^3)$. Note that $\tilde f_k$ still has a double point $y_k\in \tilde f_k(\mbb S^2)$.

Next we show that there exists another sequence of M\"obius transformations $ \sigma_k$ and a sequence of reparameterisations $\phi_k$, so that $\hat f_k:=\hat \sigma_k \circ \tilde f _ k \circ \phi_k $ converges weakly in $W^{2,2}_{loc}$ away from at most finitely many points to a possibly branched conformal immersion $ f$ which has a point of multiplicity two. By choosing an appropriate reparameterisation we can assume that $ \tilde f _k^{-1} (y_k)= \{ N,S \} $. Inverting the image surface $\tilde f_k(\mbb S^2)$ at $y_k$ and noting that $\mbb S^2 \backslash \{N,S\}$ is conformal to a cylinder $\R \times \mbb S^1$, we get a sequence $h_k$ of $W^{2,2}$-conformal immersions from the cylinder to $\R^3$ with two ends.
Now, for each $k$, we consider the length of the image of the curves $C_v:=\{v\} \times \mbb S^1$ and we note that it follows from the isoperimetric inequality in Theorem 5.2 in \cite{li91} that the length of $h_k(C_v)$ tends to infinity as $v\to \pm \infty$. Moreover, using Corollary 4.2.5 in \cite{Muller1995}, this implies that also the extrinsic diameter of $h_k(C_v)$ tends to infinity as $v\to \pm \infty$. Hence we conclude that $\inf_{v\in \R} \diam ( h_k ( C _v ) ) >0$. After performing a translation and dilation, we may therefore assume that $\diam ( h_k ( C _v ) )\ge \diam (h_k(C_{v_k})) =1 $ and $ 0 \in h_k(C_{v_k} ) $. Next, we argue as in the proof of Theorem $5.3$ in \cite{Kuwert2010}, and we conclude that there exists an inversion $I_{x_0}$ so that $I_{x_0} \circ h_k$ converges locally weakly in $W^{2,2}(\mbb S^2 \backslash \mc S,\R^n)$ to a branched conformal immersion $f$ with a double point.
 
These arguments, combined with \eqref{inversion2}, yield
\begin{align*}
\mc W(f)  = 8\pi.
\end{align*}
Next we will show that $ \mc {S} = \emptyset$. Recall that $\mc{S}$ is defined by 
\begin{align*}
\mc {S} = \{ p \in \Sigma: \alpha ( \{p\} ) \geq 8 \pi \} 
\end{align*}  
where $ \alpha $ is the limit of $ \mu_{\hat{g}_k } \llcorner |\hat{A}_k|^2 $ as a Radon measure. Now applying the Gauss-Bonnet formula \eqref{gauss-bonnet} we get that  
\begin{align}
\int_{\mbb S^2} |A|^2 d \mu  &=  4  \mc W(f) - 2 \int_{\mbb S^2} K d \mu \nonumber\\
&= 32 \pi - 4 \pi\left ( \chi(\mbb S^2) + \sum_{p_i \in \mc S } m ( p_i) \right) \nonumber\\
& = 24\pi - 4 \pi\sum_{ p _ i \in \mc {S} } m( p_i ) \label{contra}.  
\end{align}

Since $f$ has at least one point of multiplicity two we conclude from \eqref{inversion3} that $ \int_{\mbb S^2} |A|^2 d\mu \geq 16\pi$. Combining the last two estimates shows that
 \begin{align*}
\sum_{ p _ i \in \mc {S} } m ( p_i ) \leq 2.
\end{align*} 
Note that by Theorem 3.1 in \cite{Kuwert2010} we have $m(p)\ge 1$ for all $p\in \mc S$. We assume that $\mc S \not= \emptyset$. Then we conclude from the above estimate that $\mc S$ consists of either one point $p$ with $m(p)=2$, or one point $p$ with $m(p)=1$ or two points $p,q$ with $m(p)=m(q)=1$. 

The first possibility is ruled out by \eqref{inversion2}, since we know that $\mc W(f)= 8\pi$. In order to rule out the second case, we note that \eqref{contra} implies in this situation
\[
 \int_{\mbb S^2} |A|^2 d \mu=20 \pi.
\]
Using the definition of $\mc S$ and recalling that  $ \lim_{ k\rightarrow \infty }\int_{\mbb S^2}  | \tilde{A}_ k | ^ 2 d \tilde{\mu} _ k \rightarrow 24 \pi$ we get the contradiction
\begin{align*}
24 \pi = \lim_{ k\rightarrow \infty }\int_{\mbb S^2} | \hat{A}_ k | ^ 2 d \hat{\mu} _ k \ge \int_{\mbb S^2} |A|^ 2 d\mu+ \alpha ( \{ p\} ) \ge 28 \pi.
\end{align*}
We argue similarly in the third case in order to get the contradiction
\begin{align*}
24 \pi = \lim_{ k\rightarrow \infty } \int_{\mbb S^2} | \hat{A}_ k | ^ 2 d \hat{\mu} _ k \ge \int_{\mbb S^2} |A|^ 2 d\mu + \alpha ( \{ p\} ) + \alpha ( \{ q \} ) \geq 16\pi+8\pi+8\pi=  32 \pi.
\end{align*}
Hence we must have that $ \mc {S}= \emptyset$. In particular, this implies that the point of multiplicity two $0\in f(\mbb S^2)$ corresponds to two points $\{p,q\} =f^{-1}(0)$ with $p\not= q$ and $m(p)=m(q)=0$. 

Combining all the above facts we use \eqref{inversion3} and Theorem \ref{thm_hoss} in order to conclude that $f$ must be a standard immersion of an inverted catenoid. 
Furthermore we have that $ \int_{\mbb S^2} |A| ^ 2 d \mu = 24 \pi$. This shows that 
\begin{align*}
\int_{\mbb S^2} |\hat{A}_k | ^ 2 d \hat{\mu} _k \rightarrow \int_{\mbb S^2} |A| ^2 d \mu = 24 \pi
\end{align*}  
and hence we can then apply Proposition \ref{lem_strong} to conclude that the immersions $\hat{f}_k$ do not only converge weakly to an immersion of the inverted catenoid, but in fact they converge strongly in the weighted $W^{2,2}(\mbb S^2,\R^3)$-norm, which yields a contradiction.

Since the second step in the proof of Theorem \ref{rigidity} directly carries over to this situation we finish the proof of the Theorem.
\end{proof} 

We prove a similar theorem for an inverted Enneper's minimal surface. 
Let us consider an immersion $ f\in W ^{2,2} _{ conf,br }( \mbb{S}^{2} , \R^3) $ which has exactly one branch point $p$ of branch order $ m(p)=2 $. For such a surface we have that $\mc { W}( f) \geq 12 \pi$ by \eqref{inversion2}. The existence of a point $p$ with $m(p)=2$ also implies the bound $ \int_{ \mbb S^2} |A|^2 d \mu \geq 24 \pi$ by \eqref{inversion3}. However, applying the Gauss-Bonnet formula \eqref{gauss-bonnet}, we even get  
\begin{align*}
\int_ { \mbb S^2} |A| ^ 2 d\mu &= 4 \mc { W} ( f) - 2 \int _ { \mbb S^2} K d \mu \\
& \geq 48 \pi - 4 \pi ( \chi(\mbb{S}^2 ) + m (p))\\
&=  32\pi.
\end{align*}  
Furthermore note that a $W^{2,2}$ immersion of the sphere that has exactly one branch point of branch order two and satisfies $ \mc {W} = 12 \pi$, after inversion at the branch point, is a minimal surface with an end of multiplicity three and hence is $C^\infty$ away from the branch point. This shows that the immersion is an inversion of Enneper's minimal surface by \eqref{inversion2}, \eqref{inversion3} and Theorem \ref{thm_hoss}. 

\begin{thmr3}
There exists $\delta_{Enn}=\delta_0>0$ such that for every $0<\delta<\delta_0$ and every immersion $f\in W^{2,2}_{\text{conf}, br }(\mbb S^2, \R^3)$ with conformal factor $u$ where $ f $ has exactly one branch point $p\in \mbb S^2$ of branch order $m(p) = 2 $, and which satisfies $32 \pi \leq  \int_{\mbb S^2} |A|^2 d\mu \le 32\pi+\delta$ then there exists a M\"obius transformation $\sigma:\R^n \to \R^n$, a reparameterisation $\phi:\mbb S^2 \to \mbb S^2$, a
constant $C(\delta)$, with $C(\delta)\to 0$ as $\delta \to 0$, and a standard immersion $f_{\text{Enn}}\in W^{2,2}_{conf,br}(\mbb S^2,\R^3)$ of an inverted Enneper's minimal surface with
\begin{align}
||\sigma \circ f \circ \phi-f_{Enn}||_{W_u^{2,2}(\mbb S^2,\R^3)} \le C(\delta)\sqrt{\mu_{g}(\mbb S^2)}. \label{Enn_closea}
\end{align} 
\end{thmr3}
\begin{proof} 
We prove the theorem by contradiction. Let us assume that there exists a sequence $\delta_k \rightarrow 0$ and a sequence of immersions $ f _k \in W ^{2,2} _{ conf, br } ( \mbb S^2, \R^ 3 )$ such that $f_k$ has exactly one branch point $p_k$ of branch order $m(p_k)=2$ and satisfies 
\begin{align}\label{contrad3}
32 \pi \leq  \int_\Sigma |A_k|^2 d\mu_k \le 32\pi+\delta_k 
\end{align} 
but 
\begin{align}
||\sigma_k \circ f_k \circ \phi_k-f_{Enn}||_{W_{v_k}^{2,2}(S^2,\R ^3)} \ge \e_0 ,\label{Enn_close2}
\end{align} 
for some $ \e_0 > 0$, all sequences of M\"obius transformations $\sigma_k:\R^n\to \R^n$, all sequences of reparameterisations $\phi_k$ and all standard immersions $f_{Enn}$ of an inverted Enneper`s minimal surface. 

Furthermore, as a consequence of the Gauss-Bonnet formula \eqref{gauss-bonnet} we have that 
\begin{align*}
12\pi \leq  \mc W( f_k ) = \frac{1}{4} \int_{ \mbb S^2} | H_k|^2 d \mu _ k \leq \frac{1}{4} \int_{ \mbb S^2} |A_k|^ 2 d \mu _ k + \frac12 \int_{\mbb S^2} K_k d\mu_k  \rightarrow 12 \pi. 
\end{align*}

Hence we can apply Theorem \ref{branchpreserv} to get that there exists a sequence of M\"obius transforms  $ \tilde{\sigma} _k : \R^ 3 \rightarrow \R ^3$, a sequence of reparamterisations $\tilde \phi_k$ and a finite set $ \mc {S}\subset \Sigma$ so that $\tilde{f}_k:= \tilde{\sigma}_k \circ f _k \circ \tilde \phi_k$ converges weakly in $ W^{2,2} ( \mbb S^2 \backslash (\{p\}\cup\mc {S}), \R^3 ) $ (where $p=\lim_{k\to \infty} p_k$) to a branched conformal immersion $ f \in W^{2,2}_{conf,loc} ( \mbb S^2\backslash (\{p\}\cup\mc{S}) , \R ^ 3)$ which has at least one branch point of branch order $2$. 
From the weak lower semi-continuity and the conformal invariance of the energy we conclude $\mc W(f)\le 12\pi$. On the other hand, using the fact that we have at least one point of multiplicity three,  we get
\begin{align*}
\mc W(f) = 12\pi.
\end{align*}
 
Next we apply the Gauss-Bonnet formula \eqref{gauss-bonnet} to get that 
\begin{align*}
 \int_{\mbb S^2}  |A|^2 d\mu &= 4 \mc { W}( f) - 2 \int_{\mbb S^2} K d\mu \\
 & = 48 \pi - 4 \pi \bigg( \chi( \mbb S^2) + m(p) + \sum_{ p_i \in \mc S } m (p_i)\bigg) \\
 & = 32 \pi - 4 \pi \sum_{ p_ i \in \mc S } m (p_i). 
\end{align*}
As $m(p)=2 $ we have that $ \int_{\mbb S^2} |A|^ 2 d\mu \geq 24 \pi$ by \eqref{inversion3}. So we see that either $\mc S=\emptyset$ or $  \sum _{ p_i \in \mc {S} } m (p_i) = 1,2$. Hence we have the following possibilities
\begin{enumerate} 
\item $\mc S=\emptyset$,
\item $\mc {S} = \{ p_1, p_ 2 \}$ with $ m(p_1) = m(p_2) =1 $,
\item $\mc {S}=  \{p_1\} $ with $ m(p_1) = 1 $ or 
\item  $ \mc {S} = \{p_1\} $ with $ m(p_1) =2 $.
\end{enumerate}  
\textbf{Case 1)}\newline
In this case $f$ has exactly one branch point $p$ with $m(p)=2$, $\mc W(f)=12\pi$, $\int_{\mbb S^2} |A|^2 d\mu=32\pi$ and hence, using \eqref{inversion2}, \eqref{inversion3} and Theorem \ref{thm_hoss}, we conclude that $f$ must be a standard immersion of an inversion of Enneper`s minimal surface.
As $32\pi =\int_{\mbb S^2} |A|^2 d\mu =\lim_{k\to \infty} \int_{\mbb S^2} |\tilde{A}_k|^2 d\tilde{\mu}_k$ we conclude that $p$ can not be an energy concentration point and therefore the results of section $3$ yield a contradiction to \eqref{Enn_close2}.
\newline
\textbf{Case 2)}\newline
Note that $ \mc { S} = \{ q \mid \alpha ( \{q\} ) \geq 8 \pi\}$ where $ \alpha $ is the limit measure of $ \tilde{\mu}_k \llcorner |\tilde{A}_k|^2 $ and hence we get the contradiction 
\begin{align*}
 32 \pi &= \lim _{ k \rightarrow \infty } \int_{\mbb S^2} | \tilde{A}_k |^2d\tilde{\mu}_k \geq \int_{\mbb S^2} |A| ^ 2d\mu  + \alpha ( p_1)  + \alpha (p_2) \geq 24 \pi + 16 \pi. 
\end{align*} 
This shows that case 2) can not occur.\newline
\textbf{Case 3)}\newline
As in case 2) we obtain the following contradiction
\begin{align*}
 32 \pi &= \lim _{ k \rightarrow \infty } \int_{\mbb S^2} | \tilde{A}_k |^2d\tilde{\mu}_k \geq \int_{\mbb S^2} |A| ^ 2d\mu  + \alpha ( p_1) \geq 28 \pi + 8 \pi. 
\end{align*}
 \newline
\textbf{Case 4)}\newline
In this case we have a single branch point $p_1\in \mc S$ of branch order $m(p_1)=2$. In particular note that 
\begin{align*}
 32 \pi = \lim_{ k \rightarrow \infty } \int_{\mbb S^2} |\tilde{A}_k|^2 d\tilde{\mu}_k  & \geq \int_{\mbb S^2}  |A|^2 d\mu + \alpha ( p_1)\ge 24\pi +\alpha(p_1)  \ge 32 \pi
\end{align*}
and hence we have that $ \alpha ( p_1) = 8 \pi$. Therefore we conclude that there exists a disc $ B _r ( p_1) \subset \mbb S^2 $ such that 
\begin{align*}
 \lim_{k \rightarrow \infty } \int _{ B _r (p_1)} |\tilde{A}_k| ^ 2 d\tilde{\mu}_k = 8\pi.
\end{align*} 
Hence the surface loses exactly $ 8 \pi$ of total curvature at the point $p_1\in \mbb S^2$. Using local conformal coordinates we can actually assume that 
\begin{align*}
 \lim_{k \rightarrow \infty } \int _{ D _1 (p_1)} |\tilde{A}_k| ^ 2 d\tilde{\mu}_k = 8\pi,
\end{align*} 
where $D_1(p_1) \subset \R^2$. Using a convergence result of Li, Luo and Tang \cite{li}, the second author showed in \cite{Nguyen2011b}, Theorem $1.1$, that in this situation there exists a sequence of M\"obius transformations $\hat{\sigma}_k$, consisting of rescalings and translations, and a sequence of reparameterisations $\hat \phi_k$, so that $\hat{f}_k:=\hat{\sigma}_k \circ \tilde{f} _k \circ \hat \phi_k  $ converges locally weakly in $W^{2,2}_{loc}(\R^2,\R^3)$ to Enneper's minimal surface. More precisely, the sequence of M\"obius transformations is obtained by performing a suitable blow-up around a sequence of points $p_{1,k}\in D_1(p_1)$ with a sequence of radii $r_k$, for which one has
\begin{align*}
\int_{D_{r_k}(p_{1,k})} |\tilde{A}_k|^2 d\tilde{\mu}_k =8\pi-\tau=\sup_{x\in D_1(p_1)} \int_{D_{r_k}(x)} |\tilde{A}_k|^2 d\tilde{\mu}_k,
\end{align*}
where $\tau>0$ is a given number.

By a result of Kuwert-Li \cite{Kuwert2010} (see also \cite{Kuwert2012}) there exists a ball $B_1(x_0)\subset \R^3$ such that $\hat{f}_k(\R^2)  \cap B_1(x_0) =\emptyset$ for all $k\in \N$. Therefore we conclude that the sequence $\overline{f}_k:=I_{x_0} \circ \hat{f}_k$ converges weakly in $W^{2,2}(\mbb S^2 \backslash \mc S,\R^3)$ to a standard inversion of Enneper's minimal surface. Note that there are no energy concentration points for the sequence $ \overline{f}_k$ by our initial convergence result, the blow-up procedure and the fact that
\begin{align*}
32 \pi =& \int_{\mbb S^2} |\overline A|^2 d\overline \mu\\
\le& \liminf_{k\rightarrow\infty} \int_{\mbb S^2} |\overline A_k|^2 d \overline \mu_k\\
\le& \liminf_{k\rightarrow\infty} \int_{\mbb S^2} |\hat{A}_k|^2 d\hat{\mu}_k+24\pi \\
\le& 32\pi.
\end{align*}

From the results of section $3$ we conclude that $I_{x_0} \circ \hat{f}_k$ converges strongly in  $W^{2,2}(\mbb S^2,\R^3)$  to a standard inversion of Enneper's minimal surface. But this contradicts \eqref{Enn_close2} and completes the proof of the Theorem.
\end{proof}
\subsubsection{Higher codimensions}
For higher codimensions we have a result similar to Theorem \ref{rigidity3} but at a lower energy level and with Enneper's minimal surface replaced by Chen's minimal graph.
\begin{theorem}\label{rigidity4}
There exists $\delta_{Chen}=\delta_0>0$ such that for every $0<\delta<\delta_0$ and every immersion $f\in W^{2,2}_{\text{conf},br}(\mbb S^2, \R^n)$ with conformal factor $u$, $ n \geq 4$, where $ f $ has exactly one branch point of branch order $m(p) = 1 $, $ p \in   \mbb{S}^2$ and satisfies $20 \pi \leq  \int_{\mbb S^2} |A|^2 d\mu \le 20\pi+\delta$, then there exists a M\"obius transformation $\sigma:\R^n\to \R^n$, a reparameterisation $\phi:\mbb S^2 \to \mbb S^2$, a
constant $C(\delta)$, with $C(\delta)\to 0$ as $\delta \to 0$, and a standard immersion $f_{\text{Chen}}\in W^{2,2}_{\text{conf}}(\mbb S^2,\R^n)$ of an inverted Chen's minimal graph with
\begin{align}
||\sigma \circ f \circ \phi-f_{Chen}||_{W_u^{2,2}(\mbb S^2,\R^n)} \le C(\delta). \label{chen_close}
\end{align} 
\end{theorem}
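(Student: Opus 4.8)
The plan is to run the contradiction argument of Theorems \ref{rigidity} and \ref{rigidity3}, now using Chen's classification (Theorem \ref{thm_chen}) in place of Osserman's at the total curvature $4\pi$. Assume the conclusion fails: there are $\delta_k\to 0$ and immersions $f_k\in W^{2,2}_{\text{conf},br}(\mbb S^2,\R^n)$, $n\ge 4$, each with exactly one branch point $p_k$ of branch order $m(p_k)=1$ and with $20\pi\le\int_{\mbb S^2}|A_k|^2\,d\mu_k\le 20\pi+\delta_k$, but with
\[
\|\sigma_k\circ f_k\circ\phi_k-f_{Chen}\|_{W^{2,2}_{v_k}(\mbb S^2,\R^n)}\ge\e_0
\]
for all M\"obius transformations $\sigma_k$, reparameterisations $\phi_k$ and standard inverted Chen graphs $f_{Chen}$, where $v_k$ is the conformal factor of $\sigma_k\circ f_k\circ\phi_k$. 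By the Gauss--Bonnet formula \eqref{gauss-bonnet} we have $\int_{\mbb S^2}K_k\,d\mu_k=2\pi(\chi(\mbb S^2)+m(p_k))=6\pi$, hence
\[
8\pi\le\mc W(f_k)=\tfrac14\int_{\mbb S^2}|A_k|^2\,d\mu_k+\tfrac12\int_{\mbb S^2}K_k\,d\mu_k\to 8\pi .
\]

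First I would apply Theorem \ref{branchpreserv} to obtain M\"obius transformations $\tilde\sigma_k$, reparameterisations $\tilde\phi_k$, a finite set $\mc S\subset\mbb S^2$ and a limit $f\in W^{2,2}_{\text{conf},br}(\mbb S^2,\R^n)$ carrying at least one branch point of order $1$, such that $\tilde f_k:=\tilde\sigma_k\circ f_k\circ\tilde\phi_k\rightharpoonup f$ weakly in $W^{2,2}_{loc}(\mbb S^2\setminus\mc S,\R^n)$. Weak lower semicontinuity and the conformal invariance of $\mc W$ give $\mc W(f)\le 8\pi$; inverting $f$ at one of its order-$1$ branch points --- which, by the non-negativity of the Willmore energy and \eqref{inversion2}, cannot also be a multiple point --- yields $\mc W(f)\ge 8\pi$, so $\mc W(f)=8\pi$. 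Writing $M:=\sum_b m(b)\ge 1$ for the total branching order of $f$, the formulas \eqref{gauss-bonnet} and \eqref{inversion3} give
\[
\int_{\mbb S^2}|A_f|^2\,d\mu_f=4\mc W(f)-2\int_{\mbb S^2}K_f\,d\mu_f=24\pi-4\pi M\ge 16\pi ,
\]
the lower bound because the total curvature of the surface obtained by inverting at an order-$1$ branch point is non-negative; the same inequality also rules out any branch point of order $\ge 2$. Hence $M\in\{1,2\}$, so either (i) $f$ has exactly one branch point, of order $1$, and $\int_{\mbb S^2}|A_f|^2\,d\mu_f=20\pi$, or (ii) $f$ has exactly two branch points, both of order $1$, and $\int_{\mbb S^2}|A_f|^2\,d\mu_f=16\pi$.

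In case (i) no curvature concentrates, since $20\pi=\int_{\mbb S^2}|A_f|^2\,d\mu_f=\lim_k\int_{\mbb S^2}|\tilde A_k|^2\,d\tilde\mu_k$ and any concentration point would carry at least $\gamma_n=4\pi$; as also $\mc W(\tilde f_k)\to\mc W(f)$, Proposition \ref{lem_strong} applies on each conformal disc, in particular across the branch point, and upgrades the convergence to strong convergence in the weighted norm. Inverting $f$ at its branch point produces a complete, connected, non-compact minimal surface in $\R^n$ (its Willmore energy equals $\mc W(f)-8\pi=0$ by \eqref{inversion2}) of total curvature $4\pi$, which by Theorem \ref{thm_chen} is Chen's minimal graph; thus $f$ is a standard inverted Chen graph, contradicting the separation assumption. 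In case (ii), inverting $f$ at one of its order-$1$ branch points gives, via \eqref{inversion3}, a branched surface of vanishing total curvature with a multiplicity-two end, i.e.\ an inverted branched plane, so $f$ itself is not of the required form; the relevant fact is instead that exactly $4\pi$ of curvature concentrates at the other branch point $p_1$. Here I would blow up at $p_1$ exactly as in case 4 of the proof of Theorem \ref{rigidity3}: using the convergence result of Li--Luo--Tang \cite{li} and the construction of \cite{Nguyen2011b}, there are rescalings and translations $\hat\sigma_k$ and reparameterisations $\hat\phi_k$ so that $\hat f_k:=\hat\sigma_k\circ\tilde f_k\circ\hat\phi_k$ converges locally weakly in $W^{2,2}_{loc}(\R^2,\R^n)$ to a complete minimal surface of total curvature $4\pi$, which by Theorem \ref{thm_chen} is Chen's minimal graph. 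By \cite{Kuwert2010} there is a ball $B_1(x_0)$ disjoint from all $\hat f_k(\R^2)$, so $\overline f_k:=I_{x_0}\circ\hat f_k$ converges weakly to a standard inverted Chen graph, and the sandwich
\[
20\pi=\int_{\mbb S^2}|\overline A|^2\,d\overline\mu\le\liminf_k\int_{\mbb S^2}|\overline A_k|^2\,d\overline\mu_k\le\liminf_k\int_{\mbb S^2}|\hat A_k|^2\,d\hat\mu_k+16\pi\le 4\pi+16\pi=20\pi
\]
shows that $\overline f_k$ loses no curvature, so Proposition \ref{lem_strong} gives strong convergence and hence, once more, a contradiction.

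Finally, the reduction of \eqref{chen_close} for the original immersion $f$ to the normalised estimate above is word for word the second step of the proof of Theorem \ref{rigidity}: that step uses only the conformal invariance of the norm \eqref{norm}, the elementary pointwise estimates for an inversion, and the uniform two-sided bound on $|g|$ coming from Lemma $1.1$ of \cite{simon93} together with the area and Willmore bounds. The step I expect to be the main obstacle is case (ii): one has to verify that the blow-up genuinely produces Chen's minimal graph --- which is precisely why Theorem \ref{thm_chen}, the codimension-$\ge 2$ analogue of Osserman's theorem at total curvature $4\pi$, is indispensable --- and, more delicately, that after inverting the blown-up sequence no curvature escapes to infinity, so that the strong-convergence criterion of Proposition \ref{lem_strong} is applicable; this rests on the sharp accounting provided by \eqref{inversion3} and on the choice of blow-up radii, exactly as in Theorem \ref{rigidity3}.
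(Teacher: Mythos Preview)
Your proposal is correct and follows essentially the same route as the paper. The only cosmetic difference is that you organise the dichotomy by the total branching order $M=\sum_b m(b)$ of the limit, whereas the paper tracks the concentration set $\mc S$ directly; since points of $\mc S$ are precisely the non-inherited branch points of the limit (by Theorem~3.1 of \cite{Kuwert2010}), the two case splits coincide: your case~(i) is the paper's $\mc S=\emptyset$ and your case~(ii) is the paper's $\mc S=\{p_1\}$, $m(p_1)=1$. Your treatment of case~(ii)---blowing up at the concentration point, identifying the limit as Chen's graph via Theorem~\ref{thm_chen}, inverting, and running the energy sandwich to rule out further concentration---mirrors Case~4 of Theorem~\ref{rigidity3} with the numbers $(32\pi,24\pi,8\pi)$ replaced by $(20\pi,16\pi,4\pi)$, exactly as the paper does. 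You also include the final reduction step (transferring the estimate from the conformal factor of $\sigma\circ f\circ\phi$ back to that of $f$), which the paper's own proof of Theorem~\ref{rigidity4} in fact omits; this is a harmless addition in your favour.
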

\begin{proof} 
Again, we show this result by contradiction and we assume that there exists a sequence $\delta_k \rightarrow 0$, a sequence of immersions $ f _k \in W ^{2,2} _{ conf, br } ( \mbb S^2, \R^ n )$ such that each $f_k$ has a branch point $p_k$ of branch order $m(p_k)=1$ and satisfies 
\begin{align}\label{contrad4}
20 \pi \leq  \int_{\mbb S^2} |A_k|^2 d\mu < 20\pi+\delta_k .
\end{align} 
but 
\begin{align}
||\sigma_k \circ f_k \circ \phi_k-f_{Chen}||_{W_{v_k}^{2,2}(\mbb S^2,\R ^3)} \ge \e_0,\label{chen_close3}
\end{align} 
for some $ \e_0 > 0$, every sequence of M\"obius transformations $\sigma_k:\R^n\to \R^n$, ever sequence of reparameterisations $\phi_k$ and every standard immersion $f_{Chen}$ of an inverted Chen`s minimal graph. Furthermore, as a consequence of the Gauss-Bonnet formula we have that 
\begin{align*}
8\pi \leq  \mc W( f_k ) = \frac{1}{4} \int_{ \mbb S^2} | H_k|^2 d \mu _ k \leq \frac{1}{4} \int_{ \mbb S^2} |A_k|^ 2 d \mu _ k + \frac12 \int_{\mbb S^2} K_k d\mu_k \rightarrow 8 \pi
\end{align*}
where we used that $\frac12 \int_{\mbb S^2} K_k d\mu_k = \pi (\chi(\mbb S^2)+m(p_k))=3\pi$ by \eqref{gauss-bonnet}.

By Theorem \ref{branchpreserv} there exists a sequence of M\"obius transforms  $ \tilde{\sigma} _k : \R^ n \rightarrow \R ^n$, a sequence of reparameterisations $\tilde \phi_k$ and a finite set $ \mc {S}\subset \mbb S^2$ so that $\tilde{f}_k:= \tilde{\sigma}_k \circ f _k \circ \tilde \phi_k$ converges weakly in $ W^{2,2} ( \mbb S^2 \backslash (\{p\}\cup\mc {S}), \R^n ) $ to a conformal immersion $ f \in W^{2,2}_{loc} ( \mbb S^2 \backslash (\{p\}\cup\mc{S}) , \R ^ n)$ with at least one branch point of order $1$. From the weak lower semi-continuity, the conformal invariance of the energy and the fact that $f$ has a point of multiplicity two, we get that 
\begin{align*}
\mc W(f) = 8\pi.
\end{align*}
We let $p=\lim_{k\to \infty} p_k$ be the branch point of branch order $m(p)=1$ of $f$.

Furthermore, note that $ f$ completes as a branched immersion over $ \mbb S ^ 2 \backslash (\{p \} \cup \mc { S}) $ and we have the estimate
\begin{align*}
&\int_{\mbb S^2} |A| ^ 2 d \mu \leq 20 \pi .  
\end{align*} 
Hence, by the Gauss-Bonnet theorem \eqref{gauss-bonnet}, we have that 
\begin{align*}
\int_{\mbb S^2} |A|^2 d\mu &= 4 \mc W(f) - 2 \int_{\mbb S^2} K d \mu \\
&= 32 \pi - 4 \pi \bigg( 2 + m(p)+ \sum _{ p_i \in  \mc {S} } m ( p _ i ) \bigg)\\
& = 2 0 \pi -  4 \pi  \sum _{ p_i \in   \mc {S}} m ( p _ i ) . 
\end{align*}
As $16 \pi  \le \int_{\mbb S^2} |A|^2 d\mu$ by \eqref{inversion3}, we must have that 
\begin{align*}  
\sum _{ p_i \in   \mc {S}  } m ( p _ i ) \le 1.
\end{align*} 
Therefore we have two cases, either $\mc S =\emptyset$ or $ \mc S=\{p_1\} $, $p_1\not= p$ and $m(p_1) = 1$. \newline
\textbf{Case 1)}  $\mc S =\emptyset$ \newline
In this case we have that $ \int_{\mbb S^2} |A| ^ 2 d\mu = 20 \pi$, $\mc W (f)= 8\pi$ and $m(p)=1$. Inverting the surface at $f(p)$ gives a minimal surface with total curvature equal to $4 \pi$, which then must be Chen's minimal graph by Theorem \ref{thm_chen}. Arguing as in Case 1) of the proof of Theorem \ref{rigidity3} we get a contradiction to \eqref{chen_close3}.
\newline
\textbf{Case 2)}  $\mc S=\{ p_1\}$ with $m(p_1)=1$ 
\newline
Arguing similar to Case 4) in the proof of Theorem \ref{rigidity3} we conclude that $\alpha (p_1)=4\pi$. Hence we can apply once more Theorem $1.1$ from \cite{Nguyen2011b} and conclude that after performing a blow-up, the sequence of immersions converges to Chen's minimal graph.

Then we repeat the same argument as in Case 4) of the proof of Theorem \ref{rigidity3} in order to get that after composing the sequence of blow-up immersions with a suitable inversion, the new sequence converges strongly in $W^{2,2}(\mbb S^2, \R^n)$ to a standard inversion of Chen's minimal graph, contradicting \eqref{chen_close3}.
\end{proof}

\section{Quantitative rigidity for complete surfaces with finite total curvature} 

In this section we use the rigidity results from section $4$ in order to prove quantitative rigidity results associated to Theorem \ref{1}. We start with the case of codimension one.
\begin{theorem}
Let $ f : \Sigma \hookrightarrow \R ^ 3  $ be a complete, connected, non-compact surface immersed into $ \R ^ 3 $. Assume that  
\begin{align*}
\int_{\Sigma} |A| ^ 2 d\mu \leq 8 \pi + \delta,
\end{align*}
where $ \delta < \delta_{0} =\min\{\delta_{cat}, \delta_{Enn},8\pi, 4(\beta^3_1-4\pi) \}$ and $\delta_{cat}, \delta_{Enn}$ are given by Theorem \ref{rigidity2} and Theorem \ref{rigidity3}. Then either $f$ is a conformal immersion of a plane embedded in $ \R^ 3 $ or, after composition with an inversion $I_x$, $x\notin f(\Sigma)$, the surface $I_x \circ f$ is $ W^{2,2}$ close to an inversion of a catenoid or Enneper's minimal surface, modulo M\"obius transformations and reparameterisations.   
\end{theorem}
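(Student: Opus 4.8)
The plan is to invert $f$ to a compact (branched) conformal immersion of a sphere and then reduce everything to the rigidity theorems already proved (Theorems \ref{rigidity2} and \ref{rigidity3}), the remaining work being a combinatorial analysis of the ends. First I would pick a point $x\notin f(\Sigma)$ (possible since $f(\Sigma)$ has measure zero) and set $\tilde f:=I_x\circ f$. Since complete ends of finite total curvature in $\R^3$ are unbounded (by the structure theory of M\"uller-{\v{S}}ver{\'a}k), $I_x$ sends each of them to $x$, so $\tilde f$ extends to a branched conformal immersion of the conformal compactification $\bar\Sigma$ of $\Sigma$: its only branch points lie over the former ends (no others appear since $f^{-1}(x)=\emptyset$), and $\tilde f^{-1}(x)$ is precisely the set of former ends. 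Writing $g$ for the genus of $\bar\Sigma$, $b$ for the number of ends and $M\ge b$ for their total multiplicity, I would combine the Gauss--Bonnet formula \eqref{gauss-bonnet} for $\tilde f$ with the inversion identities \eqref{inversion2}, \eqref{inversion3} and with \eqref{Willeq} to get the identity
\[
\int_\Sigma |A|^2\,d\mu = 4\,\mc W(f) + 4\pi M - 8\pi + 8\pi g + 4\pi b,
\]
while the same formulas give $\int_{\bar\Sigma}|\tilde A|^2\,d\tilde\mu = \int_\Sigma |A|^2\,d\mu + 8\pi M$ and $\mc W(\tilde f)=\mc W(f)+4\pi M$.

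Using $\mc W(f)\ge 0$, $M\ge b\ge 1$, $g\ge 0$ together with $\int_\Sigma|A|^2\,d\mu\le 8\pi+\delta$ and $\delta<\min\{8\pi,4(\beta^3_1-4\pi)\}$, the first identity forces $g=0$: it gives $g\le 1$ at once, while $g=1$ would imply $\mc W(\tilde f)=\mc W(f)+4\pi\le 4\pi+\delta/4<\beta^3_1$ for a closed genus one immersion, contradicting the definition of $\beta^3_1$. Hence $\bar\Sigma=\mbb S^2$, and the identity then leaves only $(b,M)\in\{(1,1),(1,2),(1,3),(2,2)\}$ (the cases $b\ge 3$, $M\ge 4$, $(b,M)=(2,3)$ being excluded by the energy bound since $\delta$ is small). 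In the case $(1,1)$ one has $\tilde f\in W^{2,2}_{\text{conf}}(\mbb S^2,\R^3)$ with $\mc W(\tilde f)=\mc W(f)+4\pi\le 6\pi+\delta/4<8\pi$, so by the Li--Yau inequality $\tilde f$ is an embedding; hence $f=I_x\circ\tilde f$ is an embedded conformal immersion of $\mbb S^2\setminus\{\text{pt}\}$, that is of a plane, which is the first alternative. In the case $(1,3)$ the map $\tilde f$ has exactly one branch point, of order $M-b=2$, and $32\pi\le\int_{\mbb S^2}|\tilde A|^2\,d\tilde\mu=\int_\Sigma|A|^2\,d\mu+24\pi\le 32\pi+\delta$, so Theorem \ref{rigidity3} applies and gives the asserted $W^{2,2}$-closeness of $\tilde f$ to an inverted Enneper surface. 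In the case $(2,2)$ one has $\mc W(f)\le\delta/4$; inverting $\tilde f$ at any hypothetical multiple point other than $x$ and using \eqref{inversion2} together with the Li--Yau inequality shows that $\tilde f$ has exactly one point of multiplicity two, namely $x$, with $\tilde f^{-1}(x)$ the two former ends, and since $24\pi\le\int_{\mbb S^2}|\tilde A|^2\,d\tilde\mu=\int_\Sigma|A|^2\,d\mu+16\pi\le 24\pi+\delta$, Theorem \ref{rigidity2} then gives the $W^{2,2}$-closeness of $\tilde f$ to an inverted catenoid.

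It remains to exclude the case $(b,M)=(1,2)$ of a single end of multiplicity two, which is the main obstacle. Here $\tilde f$ would be a branched conformal immersion of $\mbb S^2$ into $\R^3$ with exactly one branch point, of order $M-b=1$, and $20\pi\le\int_{\mbb S^2}|\tilde A|^2\,d\tilde\mu\le 24\pi+\delta$. The plan is to show that no complete, non-compact surface immersed in $\R^3$ with exactly one end of multiplicity two satisfies $\int|A|^2\,d\mu\le 8\pi+\delta$ when $\delta$ is small. Arguing by contradiction along a minimising sequence and applying the convergence results of Section 3 (Proposition \ref{prop_globalconv}, Theorem \ref{branchpreserv}, Proposition \ref{lem_strong}) exactly as in Case 4 of the proof of Theorem \ref{rigidity3}, the inverted surfaces subconverge, after M\"obius transformations and reparameterisations, to a branched conformal immersion of $\mbb S^2$ with a point of multiplicity two and Willmore energy exactly $8\pi$; by the equality case of the Li--Yau inequality this limit is the inversion of a complete minimal surface in $\R^3$, and undoing the inversion at the branch point via \eqref{inversion2}, \eqref{inversion3} produces a complete minimal surface in $\R^3$ with one end of multiplicity two, hence with total curvature $-2\pi(1+2\,\text{genus})$, an odd multiple of $-2\pi$, which is impossible since the total curvature of a complete minimal surface of finite total curvature in $\R^3$ equals $-4\pi$ times the degree of its Gauss map. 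I expect the genuinely delicate points to be the no-energy-loss step in this last argument — identifying and excluding the singular set of the limit exactly as in the proofs of Theorems \ref{rigidity2} and \ref{rigidity3} — and the verification that $\tilde f$ is indeed a branched conformal immersion of $\bar\Sigma$ with the stated branch- and multiplicity-data, so that Theorem \ref{thm_three} may be applied; once these are in place the conclusion follows from Theorems \ref{rigidity2} and \ref{rigidity3} together with the Li--Yau inequality in the case $(b,M)=(1,1)$.
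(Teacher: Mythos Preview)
Your overall architecture --- invert at $x\notin f(\Sigma)$, enumerate the possible $(g,b,M)$ via Gauss--Bonnet and the energy bound, rule out $g=1$ using $\beta^3_1$, and feed the surviving cases into Theorems~\ref{rigidity2} and~\ref{rigidity3} --- is exactly the paper's strategy, and your treatment of the cases $(1,1)$, $(2,2)$, $(1,3)$ is essentially the same as the paper's.

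The gap is your exclusion of $(b,M)=(1,2)$. From your own identity, in this case $\int_\Sigma|A|^2\,d\mu=4\,\mc W(f)+4\pi$, so $\mc W(f)\le\pi+\delta/4$ and hence $\mc W(\tilde f)=\mc W(f)+8\pi\le 9\pi+\delta/4$. Thus along any contradiction sequence with $\delta_k\to 0$ the Willmore energy of $\tilde f_k$ is only trapped in $[8\pi,9\pi]$; there is no mechanism forcing the limit to have $\mc W=8\pi$, so the equality case of Li--Yau is not available and you cannot conclude that the limit inverts to a minimal surface. The phrase ``minimising sequence'' does not help: the class of such surfaces, if non-empty, could contain a fixed surface with $\int|A|^2$ anywhere in $[4\pi,8\pi]$, and taking $\delta_k\to 0$ gives no new information.

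The paper bypasses this entirely by invoking White's theorem \cite{White1987} at the outset: for any complete surface of finite total curvature immersed in $\R^3$, one has $\int_\Sigma K\,d\mu\in 4\pi\,\mathbb Z$. Since $|K|\le\tfrac12|A|^2$ gives $\big|\int_\Sigma K\,d\mu\big|\le 4\pi+\delta/2<8\pi$, this forces $\int_\Sigma K\,d\mu\in\{0,-4\pi\}$, and the case $(b,M)=(1,2)$ --- which would have $\int_\Sigma K\,d\mu=-2\pi$ --- is excluded for \emph{every} $\delta$, with no limiting argument. In other words, the $4\pi$-quantisation you invoke only for minimal surfaces (via the degree of the Gauss map) in fact holds for all complete surfaces of finite total curvature in $\R^3$; that is precisely White's theorem, and it is the missing ingredient in your proof.
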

\begin{remark}
Note that by the recent proof of the Willmore conjecture \cite{Marques2012} we have 
\[
4(\beta^3_1-4\pi) >8\pi.
\]
\end{remark}
\begin{proof} 
We note that the equation $ K = \frac 12 H^2 - \frac 12 |A|^2 $ implies $|K|\le \frac12 |A|^2$ and therefore   
 \begin{align*}
\bigg|\int_{ \Sigma} K d\mu \bigg| \leq \int _{ \Sigma} |K| d\mu \leq 4 \pi + \frac \delta  2 .
\end{align*}  
Since $\delta<8\pi$ a result of White \cite{White1987} yields
\begin{align*}
 \int_\Sigma K d\mu  = 0, \pm 4 \pi.
\end{align*}  
As $ \Sigma$ is complete non-compact we must have $ \int_{ \Sigma} K d\mu \in \{0, - 4 \pi\}$. \newline
\textbf{Case 1)} $\int _{ \Sigma} K d\mu = 0 $\newline  
By the Gauss-Bonnet formula for complete non-compact surfaces of finite total curvature (see \eqref{gauss-bonnet}) we conclude
 \begin{align*}
0= \int _{\Sigma} K d\mu = 2 \pi \left ( \chi( \Sigma) - \sum_{ i = 1 } ^ b (k ( a_i) +1) \right)
\end{align*}
where $ k ( a_i )+1$ is the multiplicity of the end $a_i$. Using this formula we see that the only choice is that $\Sigma$ is homeomorphic to $\mbb S^2 \backslash \{a\}$ and $k(a)=0$. 
Hence $f:\Sigma\hookrightarrow \R ^ 3 $ is the immersion of a plane with an end of multiplicity one. 

Since $ \int _{\Sigma } K d\mu = 0$ we also get 
\begin{align*}
\mc {W} (f) = \frac 14 \int_\Sigma |A|^2 d\mu \leq 2 \pi + \frac \delta 4. 
\end{align*} 
As $ \delta < 8\pi$, we see that $ \mc {W} (f) < 4 \pi$, which shows that $ f: \Sigma \hookrightarrow \R ^ 3 $ must be an embedding by \eqref{inversion2}.\newline
\textbf{Case 2)} $\int_{\Sigma} K d\mu = - 4\pi$ \newline  
Using the Gauss- Bonnet formula one more time we now have three choices:
\begin{itemize}
 \item[i)] $\Sigma$ is homeomorphic to a punctured torus $\mbb T^2 \backslash \{a\}$ with $k(a)=0$. In this situation we conclude as above that
\[
 \mc {W} (f) = \frac 14 \int_\Sigma |A|^2 +\frac12 \int_\Sigma K d\mu \le \frac{\delta}{4}< \beta^3_1-4\pi. 
\]
Next we perform an inversion at a point $x \notin f(\Sigma)$ and, using \eqref{inversion2}, we get a conformal immersion $\tilde f \in W^{2,2}$ of a torus satisfying
\[
 \beta^3_1\le \mc {W} (\tilde f) =\mc W (f)+ 4\pi < \beta^3_1,
\]
which yields a contradiction.
\item[ii)] $\Sigma$ is homeomorphic to $\mbb S^2 \backslash \{a_1,a_2\}$ and $k(a_1)=k(a_2)=0$. Using Theorem \ref{1} this implies $\int_\Sigma |A|^2 d\mu \ge 8\pi$ and, by using the same inversion as before, we get a conformal immersion $\tilde f\in W^{2,2}(\tilde \Sigma,\R^3)$ of a sphere with exactly one point of multiplicity two which has two preimage points. Moreover,  the immersion satisfies
\[
 24\pi \le \int_{\tilde \Sigma} |\tilde A|^2 d \tilde{\mu} =\int_\Sigma |A|^2 d\mu +16 \pi \le 24\pi +\delta.
\]
Since $\delta < \delta_{cat}$ we can apply Theorem \ref{rigidity2} to conclude that $\tilde{f}$ must be $W^{2,2}$ close to an inversion of the catenoid, up to a M\"obius transformation and a reparameterisation. 
\item[iii)] $\Sigma$ is homeomorphic to $\mbb S^2 \backslash \{a\}$ with $k(a)=2$. Similar to the previous case we invert the surface and obtain a $W^{2,2}$ branched conformal immersion of the sphere with exactly one branch point of branch order two and which satisfies
 \[
 32\pi \le \int_{\tilde \Sigma} |\tilde A|^2 d \tilde{\mu} =\int_\Sigma |A|^2 d\mu +24 \pi \le 32\pi +\delta.
\]
Theorem \ref{rigidity3} then shows that $\tilde{f}$ has to be $W^{2,2}$ close to an inversion of Enneper`s minimal surface, up to a M\"obius transformation and a reparameterisation.
\end{itemize} 
\end{proof}
Next we present a similar result in higher codimension.
\begin{theorem}
Let $ f : \Sigma \hookrightarrow \R ^ n  $ be a complete, connected, non-compact surface immersed into $ \R ^ n, n \geq 4  $. Assume that  
\begin{align*}
\int_{\Sigma} |A| ^ 2 d\mu \leq 4 \pi + \delta. 
\end{align*}
where $\delta < \min \{ \delta_{Chen}, 4\pi\}$ and $ \delta_{Chen}$ is from Theorem \ref{rigidity4}. Then either $f$ is a conformal immersion of a plane embedded in $ \R^ n $ or, after composition with an inversion $I_x$, $x\notin f(\Sigma)$, $I_x\circ f$ is $ W^{2,2}$ close to an inversion of Chen's minimal graph, modulo M\"obius transformations and reparameterisations.  
\end{theorem}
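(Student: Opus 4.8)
The plan is to mirror the proof of the preceding (codimension one) theorem, reducing the essential case to Theorem \ref{rigidity4} by an inversion. As in M\"uller--{\v{S}}ver{\'a}k, finite total curvature makes $\Sigma$ conformal to a compact Riemann surface with finitely many points removed, so the generalised Gauss--Bonnet formula \eqref{gauss-bonnet} applies with integer end multiplicities. First I would use $K=\tfrac{1}{2}(|H|^2-|A|^2)$, so that $|K|\le\tfrac{1}{2}|A|^2$ and hence
\[
\Big|\int_\Sigma K\,d\mu\Big|\le\frac{1}{2}\int_\Sigma|A|^2\,d\mu\le 2\pi+\frac{\delta}{2}<4\pi
\]
since $\delta<4\pi$. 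Because \eqref{gauss-bonnet} forces $\int_\Sigma K\,d\mu\in2\pi\mbb Z$ and, for a complete non-compact surface, $\int_\Sigma K\,d\mu\le0$, I would conclude $\int_\Sigma K\,d\mu\in\{0,-2\pi\}$. In contrast to the case $n=3$, the value $-2\pi$ is not excluded here --- it is exactly the value attained by Chen's minimal graph --- while $-4\pi$ is ruled out by the strict inequality above.

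If $\int_\Sigma K\,d\mu=0$, I would write $\Sigma=\overline\Sigma\setminus\{a_1,\dots,a_b\}$ with $\overline\Sigma$ of genus $g$; then \eqref{gauss-bonnet} reads $2-2g-b-\sum_i(k(a_i)+1)=0$, and since $b\ge1$ and $k(a_i)+1\ge1$ this forces $g=0$, $b=1$, $k(a_1)=0$, so $\Sigma$ is conformal to a plane with a single embedded end and $\mc W(f)=\tfrac{1}{4}\int_\Sigma|A|^2\,d\mu\le\pi+\tfrac{\delta}{4}<2\pi$. If $f$ had a double point $x_0$, its preimage would contain at least two regular points, so \eqref{inversion2} would give $\mc W(I_{x_0}\circ f)\le\mc W(f)+4\pi(1-2)<0$, which is absurd; hence $f$ is a conformal immersion of an embedded plane.

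If $\int_\Sigma K\,d\mu=-2\pi$, the same computation gives $2-2g-b-\sum_i(k(a_i)+1)=-1$, whose only solution (again using $b\ge1$ and $k(a_i)+1\ge1$) is $g=0$, $b=1$, $k(a_1)=1$; thus $\Sigma$ is conformal to $\mbb S^2$ minus a point and $f$ has one end of multiplicity two. The Gauss equation gives $\int_\Sigma|A|^2\,d\mu=4\mc W(f)+4\pi$, so $\mc W(f)\ge0$ yields $4\pi\le\int_\Sigma|A|^2\,d\mu\le4\pi+\delta$. Since $f(\Sigma)$ is a closed subset of $\R^n$ of empty interior, I would pick $x\notin f(\Sigma)$ and set $\tilde f:=I_x\circ f$; the end is then sent to a branch point of branch order $m=k(a_1)=1$, so $\tilde f$ extends to a branched conformal immersion in $W^{2,2}_{conf,br}(\mbb S^2,\R^n)$ with exactly one branch point, of branch order $1$ (there are no others, since $f$ is an immersion and $x\notin f(\Sigma)$). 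Then \eqref{inversion3} gives $\int_{\mbb S^2}|\tilde A|^2\,d\tilde\mu=\int_\Sigma|A|^2\,d\mu+8\pi(k(a_1)+1)=\int_\Sigma|A|^2\,d\mu+16\pi$, so $20\pi\le\int_{\mbb S^2}|\tilde A|^2\,d\tilde\mu\le20\pi+\delta$. Since $\delta<\delta_{Chen}$, Theorem \ref{rigidity4} applies to $\tilde f$ and produces the M\"obius transformation, reparameterisation and standard immersion of an inverted Chen's minimal graph to which $\tilde f=I_x\circ f$ is $W^{2,2}$ close, which is the asserted conclusion.

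I do not expect a genuinely new difficulty: all the analytic content sits in Theorem \ref{rigidity4} and the inversion formulas of Theorem \ref{thm_three}, so the proof is essentially a bookkeeping reduction. The two points to be careful about are, in the case $\int_\Sigma K\,d\mu=-2\pi$, reading off the conformal type of $\Sigma$ and the branch order of the end after inversion correctly, so that the hypotheses of Theorem \ref{rigidity4} (exactly one branch point of branch order one, total curvature in $[20\pi,20\pi+\delta]$) are met exactly, and, in the case $\int_\Sigma K\,d\mu=0$, the passage from the small Willmore energy bound to embeddedness via \eqref{inversion2}.
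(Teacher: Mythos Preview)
Your proposal is correct and follows essentially the same route as the paper's proof: bound $\int_\Sigma K\,d\mu$, reduce to the two cases $0$ and $-2\pi$, read off the conformal type and end structure from Gauss--Bonnet, and in the second case invert and feed the resulting branched sphere into Theorem~\ref{rigidity4}. The only cosmetic differences are that the paper cites White's theorem for the quantisation of $\int_\Sigma K\,d\mu$ and Theorem~\ref{1} for the lower bound $\int_\Sigma|A|^2\,d\mu\ge 4\pi$, whereas you extract both directly from \eqref{gauss-bonnet} and the Gauss equation; your embeddedness argument in the case $\int_\Sigma K\,d\mu=0$ via \eqref{inversion2} is also a bit more explicit than the paper's.
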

\begin{proof} 
Using again the estimate $ |K| \leq \frac12 |A| ^ 2 $ we conclude 
\begin{align*}
\left |\int_\Sigma K d\mu \right | \leq \int_\Sigma | K | d\mu \leq \frac 12 \int_\Sigma |A|^ 2 d\mu \leq 2 \pi + \frac \delta 2.  
\end{align*}
Hence as $\delta<4\pi$ and $ f : \Sigma \hookrightarrow \R ^ n , n \geq 4 $ is complete non-compact, a theorem of White \cite{White1987} implies 
\begin{align*}
\int_\Sigma K d\mu =0, -2\pi. 
\end{align*}
\textbf{Case 1)} $\int_\Sigma K d\mu = 0 $.\newline  
By the Gauss-Bonnet formula \eqref{gauss-bonnet} we have 
\begin{align*}
0=\int_\Sigma K d\mu = 2 \pi \left( \chi (\Sigma) - \sum_{ i =1 } ^  b ( k( a _ i ) +1)\right)  .
\end{align*}
Hence we must have that $ \Sigma$ is homeomorphic to $\mbb{S} ^ 2 \backslash\{ a \} $ and $ k ( a ) =0$ that is $ a$ is an end of multiplicity one. 
In particular this shows that $f: \Sigma \hookrightarrow \R ^ n $ is conformal to a plane. Furthermore, we have 
\begin{align*}
\mc W(f) \le \pi + \frac{\delta}{4}<2\pi
\end{align*}
and therefore $ f :\Sigma  \hookrightarrow \R ^ n $ must be an embedding. 
\newline
\textbf{Case 2)} $\int_{\Sigma } K d\mu = - 2 \pi$ \newline     
Note that in this case we must have the lower bound $ \int _{\Sigma} |A| ^ 2 d\mu \geq 4 \pi$ by Theorem \ref{1}.  Furthermore, applying the Gauss-Bonnet formula once more, we conclude
\begin{align*}
-2\pi =\int _\Sigma K d\mu = 2 \pi \left ( \chi(\Sigma ) - \sum_{ i =1 } ^ b (k(a_i) +1 ) \right) 
\end{align*}
and the only possibility is that $\Sigma$ is homeomorphic to $\mbb S^2 \backslash \{a\}$ with $k ( a)  = 1$, that is $a$ is an end of multiplicity two. 
Therefore we may invert the surface at a point $ x \in \R ^ n, x \not \in f ( \Sigma ) $, and we denote the resulting immersion by $ \tilde f = I _ x \circ f : \mbb{S}^ 2   \hookrightarrow\R ^ n $. $\tilde f$ is a $ W^{2,2}$ branched conformal immersion of the sphere with exactly one branch point of branch order one and which satisfies         
\begin{align*}
20\pi \leq  \int_{\tilde \Sigma}  | \tilde A| ^ 2 d \tilde{\mu}\leq 20 \pi + \delta.
\end{align*}
As $\delta < \delta _{ Chen}$ we have that by Theorem \ref{rigidity4} $ \tilde f $ is $W^{2,2}$ close to an inverted Chen's minimal graph, up to a M\"obius transformation and a reparameterisation. 
\end{proof}

\end{document}